\newcommand{\pf}{\begin{proof}}
\newcommand{\epf}{\end{proof}}
\newcommand{\eq}{\begin{equation}}
\newcommand{\eeq}{\end{equation}}
\newcommand{\eqn}{\begin{equation*}}
\newcommand{\eeqn}{\end{equation*}}
\newtheorem{theorem}[equation]{Theorem}
\newtheorem{prop}[equation]{Proposition}
\newtheorem{lemma}[equation]{Lemma}
\theoremstyle{remark}
\theoremstyle{definition}
\newtheorem{definition}[equation]{Definition}
\numberwithin{equation}{section}
\begin{document}

\title{The spherical transform of a Schwartz function on the free two step nilpotent lie group}
\author{Jingzhe Xu}
\address[Xu]{Department of Mathematics, Hong Kong University of Science
and technology,
Clear Water Bay, Kowloon, Hong Kong SAR, China}
\email{jxuad@connect.ust.hk}
\abstract{Let $F(n)$ be a connected and simply connected free 2-step
nilpotent lie group and $K$ be a compact subgroup of Aut($F(n)$). We say
that $(K,F(n))$ is a Gelfand pair when the set of integrable $K$-invariant
functions on $F(n)$ forms an abelian algebra under convolution. In this
paper, we consider the case when $K=O(n)$. In
this case, the Gelfand space $(O(n),F(n)$ is equipped with the Godement-Plancherel
measure, and the spherical transform $\land:L_{O(n)}^{2}(F(n))\rightarrow L^{2}(\Delta (O(n),F(n)))$ is an isometry.
I will prove the Gelfand space $\Delta (O(n),F(n))$ is equipped with the Godement-Plancherel measure and the inversion formula. Both of which have something related to its correspond Heisenberg group. The main result in this paper provides a complete characterization of the set
$\varphi_{O(n)} (F(n))^{\wedge}$=$\{\widehat{f}\mid f\in \varphi_{O(n)} (F(n))\}$ of spherical transforms of $O(n)$-invariant Schwartz functions on $F(n)$. I show that a function $F$ on $\Delta (O(n),F(n))$ belongs to $\varphi_{O(n)} (F(n))^{\wedge}$ if and only if the functions obtained from $F$ via application of certain derivatives and difference operators satisfy decay conditions. }
\endabstract

\keywords{Gelfand pairs, $O(n)$-bounded spherical functions,the spherical transform of a $O(n)$-invariant Schwartz function on $F(n)$ }
\subjclass[2017]{22E46, 22E47}
%
\maketitle     
%
\section{Introduction}
What can one say about the spherical transform of a $O(n)$-invariant Schwartz function on $F(n)$? More precisely, letting $\varphi_{O(n)}(F(n))$ denote the
space of $O(n)$-invariant Schwartz functions on $F(n)$ we seek to characterize the subspace $\varphi_{O(n)} (F(n))^{\wedge}$=$\{\widehat{f}\mid f\in \varphi_{O(n)} (F(n))\}$ of $C_{0}(\Delta (O(n),F(n)))$, where $O(n)$-spherical transform $\widehat{f}$ $\rightarrow$ $C$ for a function $f\in L_{O(n)}^{1}(F(n))$ is defined by $\widehat{f}(\psi)=\int_{F(n)}f(x)\overline{\psi(x)}dx$.
 The main result in this paper in section 4 below, which provides a complete solution to this problem. Before describing the contents of this problem I wish to provide some background and motivation for the study of $\varphi_{O(n)} (F(n))$ via the spherical transform.

Schwartz functions have played an important role in harmonic analysis with nilpotent groups since the work of Kirilov [1]. Let $N$ be connected and simply connected nilpotent lie group with lie algebra $n$. The exponential map: $n\rightarrow N$ is a polynomial diffeomorphism and one defines the (Frechet) space $\varphi(N)$ of Schwartz functions on $N$ via identification with the usual space $\varphi(n)$ of Schwartz function on the vector space $n$:
$\varphi(N):=\{f:N\rightarrow C \mid f\circ exp \in \varphi(n)\}$. $\varphi(N)$ is dense in $L^{p}(N)$ for each $p$ and carries an algebra structure given by the convolution product. Moreover, it is known that the primitive ideal space for $\varphi(N)$ is isomorphic to that of both $L^{1}(N)$ and $C^{*}(N)$[2]. The Heisenberg groups $H_{a}$ are the simplest groups for which $\varphi(N)$ is non-abelian. Recall that the group Fourier transform for a function $f\in L^{1}(N)$ associates to $\pi\in \widehat{N}$, an irreducible unitary representation of $N$, the bounded operator $\pi(f)=\int_{N}f(x)\pi^{*}(x)dx$ in the representation space of $\pi$. This generates the usual Euclidean Fourier transform for the case $N=R^{n}$. The importance of Schwartz functions in Euclidean harmonic analysis arises from the fact that $\varphi(R^{n})$ is preserved by the Fourier transform. It is thus very natural to seek a characterization of $\varphi(N)$ via the group Fourier transform; a problem solved by Roger.Howe in [3].

One can sometimes obtain subalgebras of $\varphi(N)$ by considering "radial" functions. This is of interest even when $N=R^{n}$. Indeed, the algebra $\varphi_{O(n)}(R^{n})$ of radial Schwartz function on $R^{n}$ can be identified with $\varphi(R^{+})$ and the Fourier transform becomes a Hankel transform on $\varphi(R^{+})$. This is the spherical transform for the Gelfand pair obtained from the action of the orthogonal group $O(n)$ on $R^{n}$[4].

Theorem 4.4 provides conditions that are both necessary and sufficient for a function $F$ on $\Delta (O(n),F(n))$ to belong to the space $\varphi_{O(n)}(F(n))^{\wedge}$:

1 $F$ is continuous on $\Delta (O(n),F(n))$.

2 The function $F_{0}$ on $R$ defined by $F_{0}(r)=F(\phi^{r})$ belongs to $\phi(R)$.

3 The map $\lambda\rightarrow F(\phi^{r,\alpha,\lambda})$ is smooth on $R^{\times}$ and the functions $\partial_{\lambda}^{m}F(\phi^{r,\alpha,\lambda})$ satisfy certain decay conditions. In particular, $\partial_{\lambda}^{m}F(\phi^{r,\alpha,\lambda})$ is a rapidly decreasing sequence in $\alpha$ for each fixed $r\in R$ and $\lambda\in R^{\times}$.

4 Certain "derivatives" of $F$ also satisfy the three conditions above. These are defined on $\Delta_{1} (O(n),F(n))$ as specific combinations of $\partial_{\lambda}$ and "difference operators" which play the role of differentiation in the discrete parameter $\alpha\in \wedge$.

The precise formulation of these conditions can be found in Definition 3.2.1. The "derivatives" of functions in $\varphi_{O(n)}(F(n))^{\wedge}$ referred to above are operators corresponding to multiplication of functions in $\varphi_{O(n)}(F(n))$ by certain polynomials. The difference operators in the discrete parameter $\alpha\in \wedge$ are linear operators whose coefficients are "generalized binomial coefficients". These coefficients were introduced by Z.Yan in [5]. A summary of their properties is given below in Section 3.

I will prove the inversion formula for $(O(n),F(n))$ is of the form:

$f(x)=\frac{c}{(2\pi)^{2n+2}}\int_{R}\int_{R^{\times}}\sum_{\alpha\in \wedge}(dimP_{\alpha})\widehat{f}(\phi^{r,\alpha,\lambda})\phi^{r,\alpha,\lambda}(x)\left |\lambda \right |^{n}d\lambda dr$,
where $x=exp(X+A)\in F(n)$, $c$ is a fixed constant, $\phi^{r,\alpha,\lambda}(x)$ is the "type 1" $O(n)$-bounded spherical functions. Also, I will show the Godement-Plancherel measure $d\mu$ on $\Delta (O(n),F(n))$ is given by:

$\int_{\Delta (O(n),F(n))}F(\psi)d\mu(\psi)=\frac{c}{(2\pi)^{2n+2}}\int_{R}\int_{R^{\times}}\sum_{\alpha\in \wedge}(dimP_{\alpha})F(\phi^{r,\alpha,\lambda})\left |\lambda \right |^{n}d\lambda dr$.
Both of them are related to the corresponding formula of the Heisenberg group.

One consequence of the estimates involved in our characterization of $\varphi_{O(n)}(F(n))^{\wedge}$ is that $f\in \varphi_{O(n)}(F(n))$ can be recovered from $F=\widehat{f}$ via the inversion of the spherical transform.

%
\section{Notation and Preliminaries}
Let $G$ be a connected Lie group, $K$ a compact subgroup. Let $\pi$ denote the natural mapping of $G$ onto $X=G/K$ and as usual we put $o=\pi(e)$ and $\widetilde{f}=f\circ \pi$ if $f$ is any function on $X$. Let $D(G)$ denote the set of all left invariant differential operators on $G$, $D_{K}(G)$ the subspace of those which are also right invariant under $K$ and $D(G/K)$ the algebra of differential operators on $G/K$ invariant under all the translations
$\tau(g):xK\rightarrow gxK$ of $G/K$.

\begin{definition}
Let $\phi$ be a complex-valued function on $G/K$ of class $C^{\infty}$ which satisfies $\phi(\pi(e))=1$;$\phi$ is called a spherical function if

(i)$\phi^{\tau(k)}=\phi$ for all $k\in K$,

(ii)$D\phi=\lambda_{D}\phi$ for each $D\in D(G/K)$,

where $\lambda_{D}$ is a complex number.
\end{definition}
It is sometimes convenient to consider the function $\widetilde{\phi}=\phi\circ \pi$ on $G$ instead of $\phi$. We say that $\widetilde{\phi}$ is a spherical function on $G$ if and only if $\phi$ is a spherical function on $G/K$. Then a spherical function $\widetilde{\phi}$ on $G$ is characterized by being an eigenfunction of each operator in $D_{K}(G)$ and in addition satisfying the relations $\widetilde{\phi}(e)=1$,$\widetilde{\phi}(kgk^{'})=\widetilde{\phi}(g)$ for all $g\in G$ and all $k,k^{'}\in K$. The last condition will be called bi-invariance under $K$.

\begin{theorem}\label{equal}
Let $f$ be a complex-valued continuous function on $G$, not identically 0. Then $f$ is a spherical function if and only if

$\int_{K}f(xky)dk=f(x)f(y)$
for all $x,y\in G$.
\end{theorem}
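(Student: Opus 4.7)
The plan is to establish the two directions separately. The "functional equation implies spherical" direction $(\Leftarrow)$ is obtained by a sequence of substitutions and differentiations in the integral identity. The converse $(\Rightarrow)$ sets up the correct bi-$K$-invariant auxiliary function and reduces to a uniqueness statement for joint eigenfunctions of $D(G/K)$, which is the main obstacle. Throughout I identify smooth functions on $G/K$ with their right-$K$-invariant smooth lifts on $G$, under which $D(G/K)$ corresponds to the action of $D_K(G)$ on such lifts.

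For $(\Leftarrow)$, assume $\int_K f(xky)\,dk = f(x)f(y)$ with $f \not\equiv 0$. Bi-$K$-invariance is routine: replacing $x$ by $xk_0$ and shifting the integration variable on $K$ leaves the integral unchanged, so $f(xk_0)f(y) = f(x)f(y)$ for all $y$; picking $y$ with $f(y) \neq 0$ gives $f(xk_0) = f(x)$, and the analogous substitution $y \mapsto k_0 y$ handles the left side. Setting $x=y=e$ combined with bi-$K$-invariance gives $f(e)^2 = f(e)$, and specializing $y=e$ alone forces $f(e) = 1$. Smoothness of $f$ follows by integrating the functional equation against a test function $\phi \in C_c^\infty(G)$ with $\int_G \phi \cdot f \neq 0$, which expresses $f$ as a smooth function of $x$. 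Finally, for each $D \in D_K(G)$, applying $D$ in the $y$-variable of the functional equation and using left-invariance of $D$ gives $\int_K (Df)(xky)\,dk = f(x)(Df)(y)$; setting $y = e$ and using bi-$K$-invariance of $Df$ yields $Df = (Df)(e)\cdot f$, which is the eigenvalue relation of Definition 2.1 with $\lam_D := (Df)(e)$.

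For $(\Rightarrow)$, fix $x \in G$ and define $F_x(y) := \int_K f(xky)\,dk - f(x)f(y)$. Routine arguments show that $F_x$ is bi-$K$-invariant in $y$, that $F_x(e) = \int_K f(xk)\,dk - f(x)f(e) = f(x) - f(x) = 0$ using bi-$K$-invariance of $f$ and $f(e) = 1$, and that $D F_x = \lam_D F_x$ for every $D \in D_K(G)$: by left-invariance, $D_y f(xky) = (Df)(xky) = \lam_D f(xky)$, so the integral term contributes $\lam_D \int_K f(xky)\,dk$ while the product term contributes $\lam_D f(x)f(y)$. Thus $F_x$ is a bi-$K$-invariant smooth joint eigenfunction of $D(G/K)$ with exactly the same eigenvalues as $f$ and with $F_x(e) = 0$.

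The main obstacle is concluding $F_x \equiv 0$, i.e., proving that any bi-$K$-invariant smooth joint eigenfunction $\phi$ of $D(G/K)$ with eigenvalues $\{\lam_D\}$ and $\phi(e) = 0$ vanishes identically. I plan to attack this by Taylor expansion at $e$ in exponential coordinates: the system $D\phi = \lam_D \phi$ for $D \in D(G/K)$, combined with bi-$K$-invariance and the identification of the symbol of $D(G/K)$ with the $K$-invariant polynomials on $\frg/\frk$, recursively determines every derivative of $\phi$ at $e$ in terms of $\phi(e)$, so $\phi$ has vanishing Taylor series at $e$. Real-analyticity of $\phi$ as a joint eigenfunction of an elliptic operator in $D(G/K)$ then propagates the vanishing from a neighborhood of $e$ to the whole connected group $G$, giving $F_x \equiv 0$ and therefore $\int_K f(xky)\,dk = f(x)f(y)$.
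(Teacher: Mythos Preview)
The paper does not prove this theorem; it is stated as classical background (the standard reference is Helgason [4], Chapter IV), so there is no in-paper argument to compare against. Your proposal is essentially the classical proof and is correct in outline.

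The $(\Leftarrow)$ direction is complete as written. For $(\Rightarrow)$, your reduction to the uniqueness of bi-$K$-invariant joint eigenfunctions normalized at $e$ is the right move, but be aware that the two ingredients you invoke at the end are themselves nontrivial theorems rather than routine observations: (i) the recursive determination of the Taylor series at $e$ relies on the surjectivity of the principal-symbol map $D(G/K)\to S(\frg/\frk)^K$, and (ii) real-analyticity requires the existence of an elliptic element in $D(G/K)$, namely the Laplace--Beltrami operator of a $G$-invariant Riemannian metric on $G/K$ (which exists because $K$ is compact). Both facts are proved in Helgason, so with appropriate citations your argument goes through.
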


Next, we consider the Heisenberg groups.
There are many ways to define Heisenberg group. I introduce two of them here.
[6]The first one is as follows:

Let $X$ be an arbitrary real vector space. Denote $X^{*}$ the dual space of $X$. Let $\varphi(X)$ be the Schwartz space of $X$, that is, the space of smooth, rapidly decreasing functions on $X$.(Unless specified otherwise, functions are complex-valued.)Let $T\subseteq C$ be the unit circle. Write
$e(t)=e^{2\pi it}$ for the usual exponential map from $R$ to $T$. Define operators on $\varphi(X)$ as follows.

(a)$\rho(x^{'})f(x)=f(x-x^{'})$ for $x,x^{'}\in X$  and $f\in \varphi(X)$

(b)$\rho(\xi)f(x)=e(\xi(x))f(x)$ for $\xi \in X^{*}$, and $x,f$ as in (a)

(c)$\rho(z)f(x)=zf(x)$ for $z\in T$,and $x,f$ as in (a)

These operators fit together to form a nice group of operators. Precisely, put

$H=X\times X^{*}\times T$.

Define a law of composition on $H$ by

$(x_{1},\xi_{1},z_{1})(x_{2},\xi_{2},z_{2})=(x_{1}+x_{2},\xi_{1}+\xi_{2},z_{1}z_{2}e(\xi_{1}(x_{2})))$.

It is easily verified that the law in above makes $H$ into a two-step nilpotent lie group with center $T$. We call $H$ the Heisenberg group. A straightforward computation shows that

$\rho:(x,\xi,z)\rightarrow \rho(x)\rho(\xi)\rho(z)$

defines an isomorphism of $H$ to a group of operators on $\varphi(X)$, or in other words, is a representation of $H$ on $\varphi(X)$.

As in usual, we let $L^{2}(X)$ be the Hilbert space of functions on $X$ which are square integrable with respect to Lebesgue measure. We know
$\varphi(X)$ is a dense subspace of $L^{2}(X)$, and in particular inherits the inner product from $L^{2}(X)$. One checks easily that the operators
$\rho(h)$ for $h\in H$ are isometries with respect to this inner product, so that in fact $\rho$ is the restriction to $\varphi(X)$ of a unitary representation, also denoted $\rho$, of $H$ on $L^{2}(X)$. As we will see, $\varphi(X)$ is intrinsically defined in terms if $\rho$.

For the second definition, we identity $H_{a}$ with $C^{a}\times R$ with multiplication given by

$(z,t)(z^{'},t^{'})=(z+z^{'},t+t^{'}+\frac{1}{2}\omega(z,z^{'}))$,
where $\omega(z,z^{'}):=-Im<z,z^{'}>=-Im(z.\overline{z}^{'})$ for $z,z^{'}\in C^{n}$, and $t,t^{'}\in R$.

It will occasionally be more congenial to have a "coordinate free" model for $H_{a}$. In this cases, we assume that $V$ is an $a$-dimensional vector space over $C$ equipped with a Hermitian inner product $<.,. >$. $H_{a}$ is then identified with $V\times R$, and the multiplication is given by

$(v,t)(v^{'},t^{'})=(v+v^{'},t+t^{'}+\frac{1}{2}\omega(v,v^{'}))$ where $\omega(v,v^{'}):=-Im<v,v^{'}>$. We will write $H_{V}$ for the Heisenberg group given by $(V,<.,. >)$.

The left-invariant vector fields generated by the one-parameter subgroups through $((0,\cdots ,0,1\pm i,0\cdots ,0),0)$ are written explicitly as

$Z_{j}=2\frac{\partial}{\partial \overline{Z_{j}}}+i\frac{Z_{j}}{2}\frac{\partial}{\partial t}$,
$\overline{Z_{j}}=2\frac{\partial}{\partial Z_{j}}-i\frac{\overline{Z_{j}}}{2}\frac{\partial}{\partial t}$.
In addition, let $T:=\frac{\partial}{\partial t}$, so that $\{Z_{1},\cdots ,Z_{a},\overline{Z_{1}},\cdots ,\overline{Z_{a}},T \}$ is a basis for the
lie algebra $n_{a}$ of $H_{a}$. With these notations one has [$Z_{j}$,$\overline{Z_{j}}$]=$-2iT$.

Consider a unimodular group $G$ with $K\subseteq G$ a compact subgroup. We denote the $L^{1}$-functions that are invariant under both the left and right actions of $K$ on $G$ by $L^{1}(G//K)$. These form a subalgebra of the group algebra $L^{1}(G)$ with respect to the convolution product
$f\ast g(x)=\int_{G}f(y)g(y^{-1}x)dy=\int_{G}f(xy^{-1})g(y)dy$.According to the traditional definition, one says that $K\subseteq G$ is a Gelfand pair if $L^{1}(G//K)$ is commutative.

Suppose now $K$ is a compact group acting on $N$, where $N$ is a connect and simply connected solvable lie group. By automorphism via some homomorphism $\phi:K\rightarrow Aut(N)$, one can form the semidirect product $K\propto N$, with group law

$(k_{1},x_{1})(k_{2},x_{2})=(k_{1}k_{2},x_{1}k_{1}.x_{2})$,
where we write $k.x$ for $\phi(k)(x)$. Right $K$-invariance of a function $f:K\propto N \rightarrow C$ means that $f(k,x)$ depends only on $x$. Accordingly, if one defines $f_{N}:N\rightarrow C$ by $f_{N}(x)=f(e,x)$, then one obtains a bijection $L^{1}(K\propto N//K)\cong L_{K}^{1}(N)$ given by $f\rightarrow f_{N}$. Here $L_{K}^{1}(N)$ denotes the $K$-invariant functions on $N$, i.e. those $f\in L_{K}^{1}(N)$ such that $f(k.x)=f(x)$ for all $x\in N$ and $k\in K$. One verifies easily that this map respects the conclusion product and we see that $K\subseteq K\propto N$ is a Gelfand pair if and only if, the convolution algebra $L_{K}^{1}(N)$ is commutative. Thus, the definition given here agrees with the more standard one.

Note that if $(K_{1},N)$ is a Gelfand pair and $K_{1}\subseteq K_{2}$, $(K_{2},N)$ is also a Gelfand pair. Also note that we can assume that $K$ acts
faithfully on $N$ since we can always replace $K$ by $K/ker\phi$. In this way, we can regard $K$ as a compact subgroup of $Aut(N)$.

\begin{lemma}\label{twist}
[7]Let $K$,$L$ be compact groups acting on $G$ which are conjugate inside $Aut(G)$. Then $(K,G)$ is a Gelfand pair if,and only if, $(L,G)$ is a Gelfand pair.
\end{lemma}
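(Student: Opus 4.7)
My plan is to transport the commutativity of $L^1_K(G)$ to $L^1_L(G)$ by pulling back along the conjugating automorphism. Fix $\phi\in\Aut(G)$ with $L=\phi K\phi^{-1}$, and define the pullback operator
\eqn
T_\phi f(x):=f(\phi(x)),\qquad f\in L^1(G).
\eeqn
The proof reduces to three observations: (i) $T_\phi$ bijects $L$-invariant functions onto $K$-invariant functions, (ii) $T_\phi$ intertwines convolution up to a positive scalar, and (iii) commutativity is preserved under such a map.

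For (i), if $f$ is $L$-invariant and $k\in K$, write $\phi k\phi^{-1}=l\in L$. Then
\eqn
T_\phi f(k\cdot x)=f(\phi(k\cdot x))=f((\phi k\phi^{-1})(\phi(x)))=f(l\cdot\phi(x))=f(\phi(x))=T_\phi f(x),
\eeqn
so $T_\phi f\in L^1_K(G)$. The inverse $T_{\phi^{-1}}$ sends $L^1_K(G)$ into $L^1_L(G)$ by the symmetric argument, giving a linear bijection $T_\phi\colon L^1_L(G)\to L^1_K(G)$.

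For (ii), since $G$ is unimodular (as $K$-invariance on a general $G$ is only meaningful in the unimodular setting used throughout the paper, and in any event $\phi$ preserves Haar measure up to a constant $c_\phi>0$ because $\phi_*$ acts linearly on the Lie algebra), the substitution $z=\phi(y)$ gives
\eqn
(T_\phi f\ast T_\phi g)(x)=\int_G f(\phi(y))\,g(\phi(y^{-1}x))\,dy=c_\phi^{-1}\!\int_G f(z)\,g(z^{-1}\phi(x))\,dz=c_\phi^{-1}\,T_\phi(f\ast g)(x),
\eeqn
using $\phi(y^{-1}x)=\phi(y)^{-1}\phi(x)$. Thus $T_\phi(f\ast g)=c_\phi\,(T_\phi f)\ast(T_\phi g)$.

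Now suppose $(K,G)$ is a Gelfand pair and take $f,g\in L^1_L(G)$. Then $T_\phi f,T_\phi g\in L^1_K(G)$ commute under convolution, so by (ii)
\eqn
c_\phi\,T_\phi(f\ast g)=T_\phi f\ast T_\phi g=T_\phi g\ast T_\phi f=c_\phi\,T_\phi(g\ast f).
\eeqn
Applying the bijection $T_{\phi^{-1}}$ yields $f\ast g=g\ast f$, so $(L,G)$ is a Gelfand pair. The reverse implication is identical with the roles of $K$ and $L$ swapped. The main (and only mildly subtle) point is the scalar $c_\phi$ coming from the change of variables; fortunately a nonzero constant is harmless for the commutativity assertion.
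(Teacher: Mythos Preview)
Your argument is correct. The paper itself does not prove this lemma; it simply cites it from reference~[7] (Benson--Jenkins--Ratcliff), so there is no internal proof to compare against. Your proposal is exactly the standard transport-of-structure argument one would expect: pull back functions along the conjugating automorphism $\phi$, observe that $T_\phi$ bijects $L^1_L(G)$ onto $L^1_K(G)$, and check that convolution is preserved up to the positive scalar $c_\phi$ coming from the modulus of $\phi$ on Haar measure. One small remark: your parenthetical justification of $c_\phi$ via unimodularity and the Lie algebra is unnecessary---the fact that an automorphism of any locally compact group scales (left) Haar measure by a positive constant follows directly from the uniqueness of Haar measure, with no Lie-theoretic or unimodularity assumption needed. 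This does not affect the validity of the proof.
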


For the (2a+1)-dimensional Heisenberg group $H_{a}$, the natural action of the group of $a\times a$ unitary matrices on $C^{a}$(which we denote by $k.z$ for $k\in U(a)$ and $Z\in C^{a}$ gives rise to a compact subgroup of $Aut(H_{a})$ via $k.(z,t)=(k.z,t)$. This subgroup, again denoted by $U_{a}$, is a maximal connected, compact subgroup of $Aut(H_{a})$ and thus any connected, compact subgroup of $Aut(H_{a})$ is the conjugate of a subgroup $K$ of $U_{a}$. Since conjugates of $K$ form Gelfand pairs with $H_{a}$ if and only if, $K$ does, and produce the same spherical functions, I will always assume that I am dealing with a compact subgroup of $U(a)$.

\begin{theorem}\label{equal}
[7] Let $N$ be a connected, simply connected nilpotent lie group. If $N$ is an $n$-step group with $n\geq 3$ then there are no Gelfand pairs $(K,N)$.
\end{theorem}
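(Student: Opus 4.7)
The plan is to translate the Gelfand-pair condition into a multiplicity-free statement for unitary representations of the semidirect product $G := K \ltimes N$ and then exhibit a structural obstruction coming from the hypothesis that the nilpotence class of $N$ is at least three. I would use the identification $L^{1}(G//K) \cong L^{1}_{K}(N)$ already recorded in the preliminaries, combined with Godement's theorem, to reformulate the commutativity of $L^{1}_{K}(N)$ as the assertion that every irreducible unitary representation $\Pi$ of $G$ admits at most a one-dimensional space of $K$-fixed vectors. The problem then becomes: show that when $N$ is nilpotent of class $n \geq 3$, some irreducible unitary representation of $G$ carries a $K$-fixed subspace of dimension at least two.

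Next I would parametrise $\widehat{G}$ by Mackey's little-group construction applied to $N \trianglelefteq G$. Since $N$ is connected, simply connected and nilpotent, its unitary dual is described by Kirillov's orbit method, so irreducible unitary representations of $G$ are labelled by pairs $(\mathcal{O}, \sigma)$, where $\mathcal{O}$ is a $K$-orbit of $N$-coadjoint orbits in $\mathfrak{n}^{*}$ and $\sigma$ is an irreducible representation of the stabilizer $K_{\mathcal{O}}$. Frobenius reciprocity then expresses $\dim \mathcal{H}^{K}_{\Pi}$ as the multiplicity of the trivial $K_{\mathcal{O}}$-representation inside the projective representation of $K_{\ell}$ attached to a polarisation at any $\ell \in \mathcal{O}$, reducing everything to a geometric multiplicity computation on coadjoint orbits.

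The third step would be to exploit the descending central series $\mathfrak{n} = \mathfrak{n}_{1} \supset \mathfrak{n}_{2} \supset \mathfrak{n}_{3} \supset \cdots$, which by hypothesis has $\mathfrak{n}_{3} \neq 0$. The idea is to use this extra central layer to produce two $K$-invariant symbols $p, q \in S(\mathfrak{n})^{K}$ whose Kirillov--Kostant Poisson bracket fails to vanish on a generic coadjoint orbit because the bracket forcibly picks up a non-zero contribution from $\mathfrak{n}_{3}^{*}$. After symmetrising $p$ and $q$ into elements of $U(\mathfrak{n})^{K}$ and convolving the corresponding $K$-averaged distributions against a test function supported near the identity, one would obtain a pair in $L^{1}_{K}(N)$ whose convolutions do not commute, contradicting the Gelfand hypothesis. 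The symbols should be chosen polarised along the central series so that the bracket unavoidably lands in a $K$-invariant summand detecting $\mathfrak{n}_{3}$.

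The principal obstacle is the last step: ensuring that no compact $K$-action in $\operatorname{Aut}(N)$ can collapse the $\mathfrak{n}_{3}$-contribution. Concretely, for an arbitrary compact $K$ one must construct the invariants $p$ and $q$ so that their Poisson bracket survives $K$-averaging, and this is exactly where the hypothesis $n \geq 3$ is essential; when $n \leq 2$ the bracket lies in $\mathfrak{z}(\mathfrak{n})^{*}$ and can be absorbed into $K$-invariants on the centre, which explains the existence of genuine two-step nilpotent Gelfand pairs such as the Heisenberg examples studied in the remainder of this paper. Making this cancellation-obstruction precise in full generality (rather than in case-by-case classifications of $K \subset \operatorname{Aut}(N)$) is the main technical hurdle.
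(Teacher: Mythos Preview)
The paper does not itself prove this theorem; it is quoted from [7] (Benson--Jenkins--Ratcliff). Against that source, your proposal heads in the right direction but leaves the decisive step unfinished, and the opening is a detour: your steps 1--2 set up a Mackey/Kirillov reformulation in terms of $K$-fixed vectors that you never actually use, since in step 3 you switch to the logically independent (and correct) strategy of exhibiting non-commuting elements of $U(\mathfrak{n})^{K}$ via their Poisson-bracket symbols. Only the elementary implication ``$L^{1}_{K}(N)$ commutative $\Rightarrow$ $U(\mathfrak{n})^{K}$ commutative'' is needed for that route, so the representation-theoretic machinery is dead weight.

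The genuine gap is the one you flag yourself: you do not construct $K$-invariant $p,q\in S(\mathfrak{n})^{K}$ with $\{p,q\}\neq 0$ for an \emph{arbitrary} compact $K\subset\operatorname{Aut}(N)$, calling this ``the main technical hurdle''. The missing idea, which is exactly the device in [7], is to use a $K$-invariant inner product on $\mathfrak{n}$ (available since $K$ is compact). Let $V_{i}$ be the orthogonal complement of $\mathfrak{n}^{(i+1)}$ in $\mathfrak{n}^{(i)}$ for the lower central series; each $V_{i}$ is $K$-stable, so for any orthonormal basis $\{X_{ij}\}_{j}$ of $V_{i}$ the quadratic element $L_{i}=\sum_{j}X_{ij}^{2}\in U(\mathfrak{n})$ is automatically $K$-invariant. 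After passing to a $K$-invariant central quotient of step exactly $3$, one has $[V_{1},V_{2}]=\mathfrak{n}^{(3)}\neq 0$, and then $\{L_{1},L_{2}\}=4\sum_{j,k}X_{1j}X_{2k}[X_{1j},X_{2k}]$ is a nonzero cubic in $S(\mathfrak{n})^{K}$, so $[L_{1},L_{2}]\neq 0$ in $U(\mathfrak{n})^{K}$ and $L^{1}_{K}(N)$ cannot be commutative. The invariant inner product is precisely what prevents any compact $K$ from ``collapsing the $\mathfrak{n}_{3}$-contribution''; without this observation your outline is not a proof.
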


Suppose the (2a+1)-dimensional Heisenberg group $H_{a}$ has lie algebra $h_{a}$ with basis $X_{1},\cdots ,X_{a},Y_{1},\cdots ,Y_{a},Z$ and structure equations given by $[X_{i},Y_{i}]=Z$. We are call the representation theory of $H_{a}$. A generic set of coadjoint orbits in $h_{a}^{*}$ is parametrized by nonzero $\lambda\in R$, where the orbit $O_{\lambda}$ is the hyperplane in $h_{a}^{*}$ of all functions taking the value $\lambda$
at $Z$. The action of $U(a)$ on $h_{a}^{*}$ preserves each $O_{\lambda}$. Hence, if $\pi(\lambda)$ is the element of $\widehat{H_{a}}$ corresponding
to $O_{\lambda}$, the $U(a)$ also preserves the equivalence class of $\pi(\lambda)$. One can realize $\pi(\lambda)$ in the Fock space.

The Fock model, for real $\lambda>0$, is defined on the space $F_{\lambda}$ of holomorphic functions on $C^{a}$ which are square integrable with respect to the measure
$d\widetilde{\omega}_{\lambda}=(\frac{\lambda}{2\pi})^{a}e^{-\frac{\lambda}{2}\left |\omega  \right |^{2}}d\omega d\overline{\omega}$ $[8]$. The space $P(C^{a})$ of holomorphic polynomials is dense in $F_{\lambda}$.

The representation $\pi_{\lambda}$ of $H_{a}$ of $F_{\lambda}$ is given by

$\pi_{\lambda}(z,t)u(\omega)=e^{i\lambda t-\frac{\lambda}{2}<\omega,z >-\frac{\lambda}{4}\left |z  \right |^{2}}u(\omega+z)$

For $\lambda<0$, $F_{\lambda}$ consists of antiholomorphic functions which are square with respect to $d\widetilde{\omega}_{\left |\lambda \right |}$,
and the representation is given by

$\pi_{\lambda}(z,t)u(\overline{\omega})=e^{i\lambda t+\frac{\lambda}{2}<\omega,z >+\frac{\lambda}{4}\left |z  \right |^{2}}u(\overline{\omega+z})$

Since the irreducible unitary representations of $H_{a}$ which are non-trivial on the center $R$, are determined up to equivalence by their centre character. For $k\in U(a)$, the representation $\pi_{\lambda}^{k}(z,t)=\pi_{\lambda}(k.z,t)$ has the same centre character as $\pi_{\lambda}$, and hence is equivalent to $\pi_{\lambda}$. For $\lambda>0$, the operator that intertwines these two representations comes from the standard action of $U(a)$ on $C^{a}$. More precisely,

$[\pi_{\lambda}(k.z,t)u](k.\omega)=[\pi_{\lambda}(z,t)(k^{-1}.u)](\omega)$.
where $k\times u(\omega)=u(k^{-1}\times \omega)$. One has a similar formula for $\lambda<0$, except that the action of $U(a)$ on antiholomorphic functions is given by $k\times u(\overline{\omega})=u(\overline{k\times \omega})$.

If we denote $W_{\lambda}(k)u(\omega)=u(k^{-1}.\omega)$ for $\lambda>0$ and $W_{\lambda}(k)u(\overline{\omega})=u(\overline{k.\omega})$ for $\lambda<0$
respectively. We have $W_{\lambda}(k)\pi_{\lambda}(z,t)W_{\lambda}(k)^{-1}=\pi_{\lambda}(k.z,t)$. That is, $U(a)$ is the stablizer of the equivalence class of $\pi_{\lambda}\in \widehat{H_{a}}$ under the action of $U(a)$ and $W_{\lambda}$. I remark that up to a factor of $det(k)^{\frac{1}{2}}$, $W_{\lambda}$ lifts to the oscillator representation on the double cover $MU(a)$ of $U(a)$ (cf[9]) Now I give another way to introduce the oscillator representation. It is as follows.

\begin{theorem}\label{equal}
(Moore-Wolf). An irreducible representation $\sigma$ of a nilpotent Lie group $N$ is square-integrable(modulo the centre of $N$)if and only if it is the unique irreducible representation of $N$ with its central character.
\end{theorem}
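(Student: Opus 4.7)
The plan is to invoke the Kirillov orbit method, which parametrises the irreducible unitary representations of the connected, simply connected nilpotent Lie group $N$ by the coadjoint orbits $\caO \subset \frn^*$, with $\pi_{\caO}$ denoting the representation attached to $\caO$. Write $\zz$ for the centre of $\frn$ and $\zz^{\perp} \subset \frn^*$ for its annihilator. For any $\ell \in \caO$ the central character of $\pi_{\caO}$ is $\chi_{\caO}(\exp Z) = e^{i\ell(Z)}$ for $Z \in \zz$; since the coadjoint action is trivial on $\zz$, the restriction $\ell|_{\zz}$ is an invariant of $\caO$. Two orbits therefore give the same central character precisely when both are contained in the affine fibre $F = \ell + \zz^{\perp}$ over a common restriction. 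Consequently, $\sigma = \pi_{\caO}$ is the unique irreducible representation with its central character if and only if $\caO = F$, i.e. the coadjoint orbit is a flat affine subspace of codimension $\dim\zz$ in $\frn^*$.

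The first step of the proof proper is to translate this flatness into algebraic data at $\ell$. The tangent space to $\caO$ at $\ell$ is $\{X \cdot \ell : X \in \frn\}$ where $(X\cdot\ell)(Y)=\ell([Y,X])$, and it always sits inside $\zz^{\perp}$. Hence $\caO = \ell + \zz^{\perp}$ iff this tangent space fills $\zz^{\perp}$, iff the skew-symmetric form $B_{\ell}(X,Y) = \ell([X,Y])$ descends to a non-degenerate form on $\frn/\zz$; equivalently, the radical $\frr_{\ell} = \{X\in\frn : B_{\ell}(X,\cdot)\equiv 0\}$ equals $\zz$.

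The second, analytically harder, step is to identify this non-degeneracy with square-integrability modulo $Z(N)$. Choose a polarising subalgebra $\frm \supset \zz$ subordinate to $\ell$ and realise $\pi_{\caO}$ as the representation induced from the unitary character $\exp Y \mapsto e^{i\ell(Y)}$ of $\exp(\frm)$. A direct calculation with a strong Malcev basis adapted to the chain $\zz \subset \frm \subset \frn$ shows that when $B_{\ell}$ is non-degenerate on $\frn/\zz$, the matrix coefficients $n \mapsto \langle \pi_{\caO}(n) u, v\rangle$ for smooth vectors $u,v$ factor, up to the central twist $e^{i\ell(Z)}$, through a Schwartz function on $N/Z$, hence are square-integrable modulo the centre. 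Conversely, if $B_{\ell}$ is degenerate on $\frn/\zz$, any $X \in \frr_{\ell}\setminus\zz$ lies in the kernel of the Lie-algebra action on $\pi_{\caO}$ modulo the centre, so $\exp(\bbR X)$ acts by scalars of modulus one in a direction transverse to $Z$; this forces $|\langle \pi_{\caO}(n)u,v\rangle|$ to be invariant along an extra one-parameter subgroup of $N/Z$, destroying any hope of $L^2(N/Z)$ integrability.

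The main obstacle is this last analytic step, where nilpotence is used decisively via the polynomial exponential map, the existence of a polarising subalgebra, and the explicit Kirillov--Schrödinger realisation. The rest of the argument is a formal unpacking of the orbit-to-representation dictionary, after which combining the two reductions gives: $\sigma$ is square-integrable mod $Z(N)$ $\iff$ $B_{\ell}$ is non-degenerate on $\frn/\zz$ $\iff$ $\caO = \ell+\zz^{\perp}$ $\iff$ $\sigma$ is the unique irreducible representation of $N$ with its central character.
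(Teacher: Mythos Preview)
The paper does not prove this statement; it is quoted without proof as a classical result of Moore and Wolf, used only as background to introduce the oscillator representation. There is therefore no argument in the paper to compare against, and your sketch should be judged on its own merits.

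Your overall architecture is the standard one and the first half is fine: via Kirillov, uniqueness with a given central character is equivalent to $\caO_\ell=\ell+\zz^{\perp}$, which in turn is equivalent to $\frr_\ell=\zz$, i.e.\ non-degeneracy of $B_\ell$ on $\frn/\zz$. (One small point you use implicitly: coadjoint orbits of a simply connected nilpotent group are closed, so an orbit of full dimension in the connected affine fibre must equal the fibre.) The direction ``$\frr_\ell=\zz\Rightarrow$ square-integrable'' via the Schr\"odinger model is also correct in outline.

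The converse direction, however, has a genuine gap. You assert that any $X\in\frr_\ell\setminus\zz$ lies in the kernel of the Lie-algebra action modulo the centre, so that $\exp(\bbR X)$ acts by scalars. This is false in general. Take the four-dimensional filiform algebra $\frn=\Span\{X,Y,Z,W\}$ with $[X,Y]=Z$, $[X,Z]=W$, centre $\zz=\bbR W$, and $\ell=W^{*}$. Then $\frr_\ell=\Span\{Y,W\}\supsetneq\zz$, but in the Schr\"odinger realisation on $L^{2}(\bbR)$ one finds $\pi_\ell(\exp(yY))\varphi(s)=e^{is^{2}y/2}\varphi(s)$, a genuine multiplication operator, not a scalar. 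The matrix coefficients still fail to lie in $L^{2}(N/Z)$, but the mechanism is insufficient decay in the $y$-direction, not invariance along a one-parameter subgroup. The correct argument for ``$\frr_\ell\supsetneq\zz\Rightarrow$ not square-integrable'' is more delicate: one route is to analyse the decomposition of $L^{2}(N/Z,\chi)$ under $N\times N$ and show it has no discrete summand when the fibre $\ell+\zz^{\perp}$ contains more than one orbit; another is a direct estimate showing the Pfaffian of $B_\ell$ on $\frn/\zz$ governs the $L^{2}$-norm of matrix coefficients. Either way, your sketch needs to be repaired at this step.
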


Let $\alpha$ be an automorphism of $H$ which acts trivially on $T$. By the uniqueness of $\rho$ we know from general considerations that there is a unitary operator $\omega(\alpha)$, defined up to a scalar multiples, such that
$\omega(\alpha)\rho(h)\omega(\alpha)^{-1}=\rho(\alpha(h))$.

\begin{theorem}\label{equal}
(Shale-Weil). Let $\widetilde{S}p$ be the 2-fold cover of Sp. Let $\widetilde{g}\rightarrow g$ be the projection map. Then there is a unitary representation $\omega$ of $\widetilde{S}p$ on $L^{2}$ such that
\end{theorem}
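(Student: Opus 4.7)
The plan is to use the Stone--von Neumann uniqueness theorem to produce a projective representation of $Sp$ on $L^{2}(X)$ and then to show that its Schur multiplier has order dividing two, so that the projective representation lifts to an honest unitary representation on the double cover $\widetilde{Sp}$.

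First, I would recall that $Sp$ acts on $H$ by automorphisms that fix the center $T$. For any $g\in Sp$ the composed representation $\rho\circ g$ of $H$ has the same central character as $\rho$, so Stone--von Neumann (the uniqueness clause of the Moore--Wolf theorem cited above) furnishes a unitary operator $\omega(g)$, unique up to a scalar of modulus one, such that $\omega(g)\rho(h)\omega(g)^{-1}=\rho(g\cdot h)$ for every $h\in H$. Choosing one such $\omega(g)$ for each $g$ yields a map $\omega:Sp\to U(L^{2}(X))/T$ which is a genuine homomorphism to the projective unitary group, equivalently a projective representation carrying some $T$-valued Borel $2$-cocycle $c$ determined by $\omega(g_{1})\omega(g_{2})=c(g_{1},g_{2})\,\omega(g_{1}g_{2})$.

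The central step is to pin down $c$ on a convenient set of generators. I would decompose $g\in Sp$ via the Bruhat decomposition into dilations, symplectic transvections, and the ``Fourier transform'' element $J$ that swaps the Lagrangian $X$ with its dual $X^{*}$. For dilations and transvections the intertwiner $\omega(g)$ can be written as an explicit multiplication or pullback operator on $L^{2}(X)$ (multiplication by a Gaussian, composition with a linear automorphism, etc.), so once Haar normalizations are fixed the cocycle values involving them are manifestly $\pm 1$. For the element $J$ the intertwiner is, up to a scalar, the Euclidean Fourier transform on $L^{2}(X)$, and Plancherel gives $\omega(J)^{2}=\pm\mathrm{id}$. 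Checking these pieces against the defining relations of $Sp$ shows that $c$ takes values in $\{\pm 1\}$, so its class has order at most two in $H^{2}(Sp,T)$.

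The main obstacle is to pass from values on generators to a controlled global cocycle and then to secure continuity. I would handle this by considering $\omega\otimes\omega$: the tensor square acts on $L^{2}(X\times X)$, where a second appeal to Stone--von Neumann shows that $\omega\otimes\omega$ descends to a bona fide linear representation of $Sp$, so $c^{2}\equiv 1$ globally. Pulling $\omega$ back along the covering map $\widetilde{Sp}\to Sp$ and choosing signs consistently on a simply connected neighborhood of the identity, the $\mathbb{Z}/2$ ambiguity is absorbed, yielding a well-defined unitary $\omega(\widetilde{g})$ for every $\widetilde{g}\in\widetilde{Sp}$ that still satisfies $\omega(\widetilde{g})\rho(h)\omega(\widetilde{g})^{-1}=\rho(g\cdot h)$. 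Strong continuity of $\omega$ on $\widetilde{Sp}$ then follows from strong continuity of $\rho$ together with the standard result that a Borel unitary representation of a second countable Lie group is automatically strongly continuous.
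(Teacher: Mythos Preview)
The paper does not prove this statement; it is quoted as the classical Shale--Weil theorem and used as background, with no argument supplied. So there is no ``paper's own proof'' to compare against.

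Your sketch follows the standard route (Stone--von Neumann gives a projective representation, then one shows the cocycle class has order dividing two and lifts to the metaplectic cover), and the $\omega\otimes\omega$ trick is a legitimate shortcut for establishing that $[c]^{2}$ is trivial in $H^{2}(Sp,T)$. Two points to tighten. First, the assertion that ``Plancherel gives $\omega(J)^{2}=\pm\mathrm{id}$'' is not accurate: the Fourier transform squared is the parity operator $f\mapsto f(-\,\cdot\,)$, matching $J^{2}=-I\in Sp$, and the scalar ambiguity that actually appears involves eighth roots of unity (the Weil index), not just $\pm1$; your generator-by-generator remarks therefore do not, on their own, pin the cocycle down to $\{\pm1\}$. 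Second, from $[c]^{2}=1$ you get a lift to \emph{some} central $\mathbb{Z}/2$-extension; to identify it with the stated $\widetilde{Sp}$ you should invoke that $\pi_{1}(Sp)\cong\mathbb{Z}$, so the connected double cover is unique, and check that $[c]$ is nontrivial (otherwise $\omega$ would already live on $Sp$, which it does not). With those two clarifications the outline is sound.
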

$\omega(\widetilde{g})\rho(h)\omega(\widetilde{g})^{-1}=\rho(g(h))$.
I call $\omega$ the oscillator representation.

Given a compact, connected subgroup $K\subseteq U(n)$, we denote its complexification by $K_{C}$. The action of $K$ on $C^{n}$ yields a representation of $K_{C}$ on $C^{n}$, and one can view $K_{C}$ as a subgroup of $Gl(n,C)$.

A finite dimensional representation $\rho:G\rightarrow Gl(V)$ in a complex vector space $V$ is said to be multiplicity free if each irreducible $G$-modules occurs at most once in the associated representation on the polynomial ring $C[V]$.(given by $(x.p)(z)=p(\rho(x^{-1})z))$.

\begin{theorem}\label{equal}
Let $K$ be a compact, connected subgroup of $U(a)$ acting irreducibly on $C^{a}$. The following are equivalent:(i)$(K,H_{a})$ is a Gelfand pair.(ii)The representation of $K_{C}$ on $C^{a}$ is multiplicity free. (iii)The representation of $K_{C}$ on $C_{a}$ is equivalent to one of the representations in a table[7].
\end{theorem}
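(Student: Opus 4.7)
The plan is to prove the three equivalences as two separate arguments: the representation-theoretic equivalence (i)$\iff$(ii), and the classification-theoretic equivalence (ii)$\iff$(iii).

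For (i)$\Rightarrow$(ii), I would first use the Mackey machine for the semidirect product $K \ltimes H_a$. Since $K$ fixes the center character, every $\pi_\lambda \in \widehat{H_a}$ (for $\lambda \neq 0$) extends uniquely to an irreducible representation of $K \ltimes H_a$ on the Fock space $F_\lambda$ via $(k,h) \mapsto W_\lambda(k)\pi_\lambda(h)$, using the cocycle $W_\lambda$ introduced above (lifted to the double cover $MU(a)$ if necessary, which does not affect any multiplicity statement). By Godement's criterion applied to $K \ltimes H_a$, the commutativity of $L^1_K(H_a) \cong L^1(K\ltimes H_a//K)$ forces the restriction of $W_\lambda$ to $K$ on $F_\lambda$ to be multiplicity-free. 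The key identification is that the polynomial subspace $P(C^a)$ is dense in $F_\lambda$ and $K$-invariant, and the $K$-action there (up to the character $\det(k)^{1/2}$ coming from the metaplectic twist) is the standard action on holomorphic polynomials. Hence the multiplicity-freeness of $W_\lambda|_K$ translates into multiplicity-freeness of the $K$-action on $P(C^a)$; since complexification does not change the decomposition of a finite-dimensional representation into $K$-isotypic components, this is equivalent to multiplicity-freeness of the $K_C$-action on $C^a$ (the latter being interpreted in the sense of representations on $P(C^a) = C[C^a]$ as in the definition given earlier).

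For (ii)$\Rightarrow$(i), I would reverse the argument: multiplicity-freeness of $K_C$ on $P(C^a)$ gives multiplicity-freeness of $W_\lambda|_K$, which by Schur's lemma forces the algebra of $K$-equivariant bounded operators on $F_\lambda$ to be commutative. Passing from each irreducible $\pi_\lambda$ back to the convolution algebra via the group Fourier transform on $H_a$, together with the handling of the degenerate $\lambda = 0$ part (the $K$-invariant functions on the abelian quotient $H_a/Z(H_a) \cong C^a$, which are trivially commutative under convolution since $K \subseteq U(a) \subseteq O(2a)$ makes $(K,C^a)$ a Gelfand pair), one obtains the commutativity of $L_K^1(H_a)$.

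For (ii)$\iff$(iii), this is a classification statement rather than a structural one. My plan is to invoke the known classification of irreducible multiplicity-free representations of connected complex reductive groups (due to Kac, with refinements by Benson--Ratcliff and Leahy in the present reductive setting). Since $K$ is compact and connected and acts irreducibly on $C^a$, $K_C$ is connected reductive and the representation on $C^a$ is irreducible; the list of such $(K_C, C^a)$ that are multiplicity-free is finite and is exactly the table cited from reference [7]. One direction (the table entries are multiplicity-free) is a case-by-case verification using classical branching formulas, and the other direction (any multiplicity-free irreducible is on the list) follows from Kac's argument bounding rank and weight structure.

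The main obstacle is the representation-theoretic step (i)$\iff$(ii): one must carefully track the metaplectic cocycle and handle the role of the degenerate ($\lambda = 0$) representations, since otherwise one only controls the non-degenerate spectrum. The classification step (ii)$\iff$(iii) is combinatorially lengthy but essentially a citation once the framework is set up.
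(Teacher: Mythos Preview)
The paper does not supply its own proof of this theorem; it is simply quoted as a known result from reference~[7] (Benson--Jenkins--Ratcliff, \emph{On Gelfand Pairs Associated with Solvable Lie Groups}). Your proposal is a correct outline of precisely the argument given in that reference: the equivalence (i)$\iff$(ii) is obtained there via the Mackey description of $\widehat{K\ltimes H_a}$ together with the criterion that $(G,K)$ is a Gelfand pair if and only if every irreducible unitary representation of $G$ has a $K$-fixed subspace of dimension at most one, and the equivalence (ii)$\iff$(iii) is the Kac classification of irreducible multiplicity-free actions (with the reductive refinements you mention). So your approach coincides with the source the paper cites; there is nothing further to compare.

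One minor comment on your write-up: in the direction (ii)$\Rightarrow$(i) you should not say that $(K,\mathbb{C}^a)$ is ``trivially'' a Gelfand pair because $K\subseteq O(2a)$; commutativity of $L^1_K(\mathbb{C}^a)$ holds simply because $\mathbb{C}^a$ is abelian, regardless of $K$. This does not affect the validity of the argument.
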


\begin{theorem}\label{equal}
[7]$(SO(n),F(n))$ and $(O(n),F(n))$ are Gelfand pairs for all $n\geq 2$.
\end{theorem}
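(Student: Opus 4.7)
The strategy is Selberg's classical symmetry criterion: if a compact group $K\subseteq\mathrm{Aut}(N)$ has every $K$-orbit in $N$ stable under group inversion, then $L^1_K(N)$ is commutative. Indeed, $K$-invariance of $f$ then gives $f(x^{-1})=f(x)$, and a change of variable in the convolution integral turns $f*g$ into $g*f$. Via the exponential diffeomorphism, $F(n)\cong V\oplus\Lambda^2 V$ with $V=\mathbb{R}^n$, and by 2-step nilpotency inversion becomes $(X,A)\mapsto(-X,-A)$. Since $O(n)$ acts on $V$ by the defining representation and on $\Lambda^2 V\cong\mathfrak{so}(n)$ by conjugation, the theorem for $K=O(n)$ reduces to the following orbit-reversal statement:
\begin{equation*}
(\star)\qquad \forall\,(X,A)\in \mathbb{R}^n\oplus\mathfrak{so}(n),\ \exists\,k\in O(n):\ kX=-X\ \text{and}\ kAk^{-1}=-A.
\end{equation*}

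I would prove $(\star)$ by induction on $n$, descending in steps of two. The base cases $n=1$ (trivial) and $n=2$ are immediate: $O(2)$ acts on $\Lambda^2\mathbb{R}^2\cong\mathbb{R}$ via the determinant, so any reflection perpendicular to $X$ (or any reflection if $X=0$) sends $(X,A)$ to $(-X,-A)$. For $n\geq 3$ with $X\neq 0$, decompose $V=\mathbb{R} X\oplus X^\perp$ and write $A=X\wedge a+B$ with $a\in X^\perp$ and $B\in\Lambda^2 X^\perp$. Seek $k=(-1)\oplus s'$ in this decomposition, with $s'\in O(X^\perp)\cong O(n-1)$; a short expansion of $k\cdot A$ collapses $(\star)$ to the two conditions $s'a=a$ and $s'\cdot B=-B$. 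If $a\neq 0$, decompose further $X^\perp=\mathbb{R} a\oplus a^\perp$ and $B=a\wedge b+C$ with $b\in a^\perp$ and $C\in\Lambda^2 a^\perp$; taking $s'=1\oplus s''$ reduces the requirement to $s''b=-b$ and $s''\cdot C=-C$, which is precisely $(\star)$ in dimension $n-2$ and follows by the inductive hypothesis. The subcases $a=0$ and $X=0$ reduce to flipping a skew matrix by $O$-conjugation, accomplished by placing it in the standard block canonical form via $SO$ and swapping coordinates within each nonzero $2\times 2$ block. This completes $(\star)$, hence $(O(n),F(n))$ is a Gelfand pair.

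The $(SO(n),F(n))$ case is more delicate, since $SO(n)$-orbits are smaller and inversion-Selberg genuinely fails in low dimensions: already for $n=2$, $(SO(2),F(2))$ coincides with $(U(1),H_1)$ and no element of $SO(2)$ sends a generic $A\in\mathfrak{so}(2)$ to $-A$. Here I would either (a) refine the $O(n)$ induction with determinant bookkeeping for $n\geq 3$, composing each produced $k$ with a $\det$-$(-1)$ element of the common stabilizer of $(X,A)$ whenever such an element exists; or (b) for $n=2$ invoke Selberg with the anti-automorphism $\sigma(z,t)=(\bar z,t)$ of $H_1$, which commutes with the $U(1)$-action up to inversion and preserves $U(1)$-orbits since $\bar z=e^{-2i\arg z}\cdot z\in U(1)\cdot z$. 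The cleanest overall route, however, is to appeal to the Vinberg--Benson--Ratcliff classification of Gelfand pairs of 2-step nilpotent type (cited as reference [7] in the paper), which translates the problem into a multiplicity-freeness check for the $K_\ell$-action on a Fock-type polarization at a generic central character $\ell$; this is classical invariant theory for both $O(n)$ and $SO(n)$. The main obstacle is precisely this $SO(n)$ bookkeeping: the $O(n)$ induction is essentially linear algebra, but $SO(n)$ requires either a careful parity analysis or an appeal to the general multiplicity-free framework.
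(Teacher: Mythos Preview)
The paper itself supplies no proof of this statement: it is quoted verbatim from reference~[7] (Benson--Jenkins--Ratcliff), so there is nothing in the present paper to compare your argument against beyond the bare citation.

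That said, your $O(n)$ argument via the Selberg inversion criterion is correct and is essentially the method used in~[7]. The inductive reduction $(\star)$ in dimension $n$ to $(\star)$ in dimension $n-2$ is clean, and the residual cases ($a=0$, $X=0$) are handled correctly by the block-swap on the canonical form.

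Your assessment of the $SO(n)$ case is also accurate: the orbit-inversion criterion genuinely fails for $SO(2)$ acting on $F(2)\cong H_1$, since $SO(2)$ acts trivially by conjugation on $\mathfrak{so}(2)$ and hence cannot send a nonzero $A$ to $-A$. Your route~(b) for $n=2$ is valid---the map $(z,t)\mapsto(\bar z,t)$ is an involutive anti-automorphism of $H_1$ preserving $U(1)$-orbits, which is exactly what a generalized Selberg argument needs. For $n\geq 3$ your route~(a) also works, though the determinant bookkeeping is slightly more delicate than ``compose with a $\det=-1$ stabilizer element'': in your induction one has $\det k = -\det s''$, so one must show $(\star)$ in dimension $n-2$ is solvable with $\det s''=-1$, which follows by tracking the base cases (in dimensions $1$ and $2$ the solution is forced to have determinant $-1$ generically). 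Your closing deferral to the multiplicity-free framework of~[7] is, of course, precisely what the paper does.
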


\begin{theorem}\label{equal}
[7]If $K$ is a proper, closed(not necessarily connected) subgroup of $SO(n)$ then $(K,F(n))$ is not a Gelfand pair.
\end{theorem}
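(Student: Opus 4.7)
My plan is to apply the Corwin--Greenleaf criterion for nilpotent Gelfand pairs, which asserts that $(K,F(n))$ is Gelfand if and only if the algebra of $K$-invariant polynomial functions on $\frf(n)^{*}$, equipped with the Kirillov--Kostant Poisson bracket, is commutative. Writing $\frf(n)=V\oplus\wedge^{2}V$ with $V=\Real^{n}$ and centre $\wedge^{2}V$, the Poisson bracket of two polynomials pulled back from $V^{*}$ lands in the space of polynomials of degree one in the $(\wedge^{2}V)^{*}$-variable. The strict containment $K\subsetneq SO(n)$ forces, by the first fundamental theorem of invariant theory, a strict inclusion $\Comp[\frf(n)^{*}]^{SO(n)}\subsetneq\Comp[\frf(n)^{*}]^{K}$, and the plan is to exploit the additional invariants to manufacture a nonzero Poisson bracket.

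I would split the argument into two cases. If $K$ acts reducibly on $V$, say $V=V_{1}\oplus V_{2}$ with both factors nonzero $K$-submodules, then I choose $K$-invariant quadratic forms $p$ on $V_{1}^{*}$ and $q$ on $V_{2}^{*}$ (for example the restrictions of $|\xi|^{2}$). Since the gradients of $p$ and $q$ lie in $V_{1}$ and $V_{2}$ respectively, the commutator $[dp,dq]$ lies in $V_{1}\wedge V_{2}\subseteq\wedge^{2}V$ and is nonzero, so $\{p,q\}(\xi,\eta)=\eta([dp,dq])$ is a nonzero polynomial on $\frf(n)^{*}$, contradicting commutativity. If $K$ acts irreducibly on $V$, I reduce to the identity component $K^{0}\subsetneq SO(n)$ and invoke the classification of compact connected proper subgroups of $SO(n)$ acting irreducibly on $V$: the candidates include $U(m)\subset SO(2m)$, $Sp(k)\cdot Sp(1)\subset SO(4k)$, $G_{2}\subset SO(7)$, $\Spin(7)\subset SO(8)$, $\Spin(9)\subset SO(16)$, and the isotropy representations of compact Riemannian symmetric spaces. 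For each entry the extra structure (complex, quaternionic, or exceptional) supplies an additional $K$-invariant element of $\wedge^{2}V$ or an additional mixed invariant on $V\oplus\wedge^{2}V$, and a short calculation shows that this invariant fails to Poisson-commute with one of the classical generators such as $|\xi|^{2}$ or $\mathrm{tr}(A^{2k})$. The disconnected case is then handled by averaging any offending $K^{0}$-invariants over the finite quotient $K/K^{0}$ to produce $K$-invariants with the same nonzero bracket.

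The main obstacle will be the case-by-case verification for the irreducibly-acting connected subgroups, which is feasible but tedious. I would pursue in parallel a more uniform route based on the multiplicity-free characterization of nilpotent Gelfand pairs: for almost every $\ell\in(\wedge^{2}V)^{*}$ the stabilizer $K_{\ell}$ must act multiplicity-freely on the Fock space of the Heisenberg group attached to the skew form $B_{\ell}(X,Y)=\ell([X,Y])$ on $V$. For $K=SO(n)$ this is granted by the Gelfand property of $(SO(n),F(n))$ recorded above, while for any strict subgroup the shrinking of $(K_{\ell})_{\Comp}$ refines the isotype decomposition of the polynomial ring on the symplectic quotient of $V_{\Comp}$ and generically breaks multiplicity-freeness; carrying this uniform argument to completion would eliminate the need for the explicit case analysis.
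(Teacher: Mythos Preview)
The paper does not prove this statement; it is quoted from [7]. Your Poisson-bracket approach is sound in principle and the reducible case is correct, but the irreducible case has two genuine gaps. First, the list you give is essentially the Berger list of groups acting \emph{transitively} on $S^{n-1}$, not the list of compact connected groups acting irreducibly on $\Real^{n}$; the latter is infinite (every real irreducible representation of every compact Lie group furnishes one, e.g.\ $SO(3)\hookrightarrow SO(2k+1)$ for all $k\ge 1$), so a case-by-case check cannot terminate. Second, your passage from $K^{0}$ to $K$ runs in the wrong direction: since $L^{1}_{K}(F(n))\subset L^{1}_{K^{0}}(F(n))$, it is $(K^{0},F(n))$ Gelfand that forces $(K,F(n))$ Gelfand, not conversely, and averaging $K^{0}$-invariants over $K/K^{0}$ can annihilate the bracket. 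For instance with $K^{0}=SO(2)\times SO(2)\subset SO(4)$ and $K/K^{0}=\Int/2$ swapping the two $\Real^{2}$-factors, your $p=|\xi_{1}|^{2}$ and $q=|\xi_{2}|^{2}$ average to the \emph{same} $K$-invariant, whose bracket with itself vanishes.

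The argument in [7] is essentially your ``parallel route'', carried out uniformly with no classification. A regular $A\in(\wedge^{2}V)^{*}\cong\frso(n)$ has $SO(n)$-centraliser a maximal torus $T\cong U(1)^{p'}$, acting on the Fock space $\Comp[z_{1},\dots,z_{p'}]$ of the associated Heisenberg group with pairwise distinct weights $\Int_{\ge 0}^{p'}$. If $K_{A}=K\cap T$ equalled $T$ for every regular $A$ then $K$ would contain every maximal torus of $SO(n)$ and hence equal $SO(n)$; so some regular $A$ has $K_{A}\subsetneq T$. The restriction map $\widehat{T}=\Int^{p'}\to\widehat{K_{A}}$ then has a nonzero element $\gamma=\gamma^{+}-\gamma^{-}$ (with $\gamma^{\pm}\in\Int_{\ge 0}^{p'}$) in its kernel, so the monomials $z^{\gamma^{+}}$ and $z^{\gamma^{-}}$ carry the same $K_{A}$-character. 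Hence $K_{A}$ does not act multiplicity-freely and $(K,F(n))$ is not a Gelfand pair. This handles every closed $K\subsetneq SO(n)$ at once, connected or not.
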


A result due to Howe and Umeda (cf. [10]) shows that
$\mathbb{C}[v_{R}]^{K}$ is freely
generated as an algebra. So there are polynomials
$\gamma_{1},\cdots ,\gamma_{d}\in \mathbb{C}[v_{R}]^{K}$ so that
$\mathbb{C}[v_{R}]^{K}=\mathbb{C}[\gamma_{1},\cdots ,\gamma_{d}]$.

We call $\gamma_{1},\cdots ,\gamma_{d}$ the fundamental invariants and we can suppose $\gamma_{1}(z)=\gamma(z)=\frac{\left |z \right |^{2}}{2}$.

Invariant different operators. The algebra $\mathbb{D}(H_{a})$ of
left-invariant
differential operators on $H_{a}$ is generated by
$\{Z_{1},\cdots ,Z_{a},\overline{Z_{1}},\cdots ,\overline{Z_{a}},T\}$. We
denote the subalgebra of $K$-invariant differential operators by

$\mathbb{D}_{K}(H_{a}):=\{D\in \mathbb{D}(H_{a})\mid D(f\circ k)=D(f)\circ
k \ for \ k\in K,f\in C^{\infty}(H_{a})\}$

From now on, we always suppose $(K,H_{a})$ is a Gelfand pair, and if this
is true, $\mathbb{D}_{K}(H_{a})$ is an abelian algebra.

One differential operator will play a key role in the Heisenberg group. This is the Heisenberg sub-Laplacian defined by

$U=\frac{1}{2}\sum_{j=1}^{n}(Z_{j}\overline{Z}_{j}+\overline{Z}_{j}Z_{j})$.
$U$ is $U(a)$-invariant and hence belongs to $\mathbb{D}_{K}(H_{a})$ for all Gelfand pairs $(K,H_{a})$. Note that $U$ is essentially self-adjoint on $L^{2}(H_{a})$.

The eigenvalues of the Heisenberg sub-Laplacian $U$ on the type 1 $K$-spherical functions are given by

$U(\phi_{\alpha,\lambda})=-\left |\lambda \right |(2\left |\alpha  \right |+a)\phi_{\alpha,\lambda}$.

Define $\phi_{\alpha,\lambda}$ for $\alpha\in \wedge$ and $\lambda\in
\mathbb{R}^{\times}$ by

$\phi_{\alpha,\lambda}(z,t)=\phi_{\alpha}(\sqrt{\left |\lambda \right |}z,\lambda t)$,

so that $\phi_{\alpha}=\phi_{\alpha,1}$. The $\phi_{\alpha,\lambda}$'s are dinstinct bounded $K$-spherical functions. We refer to these elements of $\Delta(K,H_{a})$ as the spherical function of type 1. One can show that $\phi_{\alpha}$ has the general form

$\phi_{\alpha}(z,t)=e^{it}q_{\alpha}(z)e^{-\frac{\left |z \right |}{4}}$,

where $q_{\alpha}$ is a $K$-invariant polynomial on $V_{R}$ with homogeneous component of highest degree given by
$(-1)^{\left |\alpha \right |}p_{\alpha}/dim(P_{\alpha})$.

In addition to the $K$-spherical functions of type 1, there are $K$-spherical functions which arise from the one-dimensional representations of $H_{a}$. For $\omega \in V$, let

$\eta_{\omega}(z,t)=\int_{K}e^{iRe<\omega,k.z>}dk=\int_{K}e^{iRe<z,k.\omega>}dk$

where $"dk"$ denotes normalized Haar measure on $K$. The $\eta_{\omega}$ are the bounded $K$-spherical functions of type 2. Note that $\eta_{0}$
is the constant function 1 and $\eta_{\omega}=\eta_{\omega^{'}}$ if and only if $K.\omega=K.\omega^{'}$. It is shown in [16] that every bounded $K$-spherical function is of type 1 or type 2. Thus we have:

\begin{theorem}\label{equal}
The bounded $K$-spherical functions on $H_{a}$ are parametrized by the set
$(\mathbb{R}^{\times}\times \wedge)\cup (V/K)$ via

$\bigtriangleup(K,H_{a})=\{\phi_{\alpha,\lambda }\mid \lambda\in
\mathbb{R}^{\times},\alpha\in \wedge\}\cup \{\eta_{K_{\omega}}\mid
\omega\in V\}$

Note that, for $\psi\in \bigtriangleup(K,H_{n})$, one has
$\psi(z,t)=e^{i\lambda t}\psi(z,0)$,

where $\lambda=-i\widehat{T}(\psi)\in \mathbb{R}$.
\end{theorem}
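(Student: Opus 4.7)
The plan is to decompose the proof along the central character and exploit the Gelfand functional equation from Theorem 2.2. Since $T=\partial/\partial t$ belongs to $\mathbb{D}_K(H_a)$ (it generates the center and is manifestly $K$-invariant), any bounded $K$-spherical function $\psi$ satisfies $T\psi=i\lambda\psi$ for some scalar; integrating in $t$ and using $\psi(0,0)=1$ yields $\psi(z,t)=e^{ct}\psi(z,0)$, and boundedness of $\psi$ in the central direction forces $c=i\lambda$ with $\lambda\in\mathbb{R}$. This gives the asserted central character factorization at once, and reduces the problem to classifying $\tilde\psi(z):=\psi(z,0)$ according to whether $\lambda=0$ or $\lambda\neq 0$.

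In the case $\lambda\neq 0$, the strategy is to realize $\psi$ as a normalized matrix coefficient of the Fock representation $\pi_\lambda$. The functional equation $\int_K \psi(xky)\,dk=\psi(x)\psi(y)$ together with positive-definiteness shows that $\psi$ is a bi-$K$-invariant joint eigenfunction of $\mathbb{D}_K(H_a)$ with central character $\lambda$. By Stone--von Neumann any cyclic unitary representation of $H_a$ with this central character is a multiple of $\pi_\lambda$, and the Godement construction realizes $\psi$ as a matrix coefficient coming from a $K$-fixed one-dimensional projection in that multiple. The Gelfand-pair hypothesis, via Theorem 2.9, implies that the representation of $K_{\mathbb{C}}$ on $\mathbb{C}[\mathbb{C}^a]$ is multiplicity-free, so the Fock space decomposes as $F_\lambda=\bigoplus_{\alpha\in\wedge} P_\alpha$ into pairwise inequivalent irreducible $K$-isotypic components. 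The only projections compatible with the spherical condition are those onto individual $P_\alpha$, producing exactly the functions
\[
\phi_{\alpha,\lambda}(z,t)=\frac{1}{\dim P_\alpha}\sum_{i}\langle \pi_\lambda(z,t)e_i,e_i\rangle
\]
for any orthonormal basis $\{e_i\}$ of $P_\alpha$. A direct calculation via Schur's lemma then confirms that each $\phi_{\alpha,\lambda}$ satisfies the Gelfand functional equation, establishing the type-1 half of the parametrization.

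In the case $\lambda=0$, the function $\psi$ is independent of $t$ and descends to a bounded $K$-spherical function on the abelian quotient $H_a/Z(H_a)\cong V$, with $K$ acting linearly by unitary transformations. The classification of bounded spherical functions for Gelfand pairs $(K,V)$ with $K$ compact acting orthogonally on a vector space is classical: they are precisely the $K$-orbital averages of unitary characters, giving
\[
\eta_\omega(z,t)=\int_K e^{i\mathrm{Re}\langle z,k.\omega\rangle}\,dk
\]
for $\omega\in V$. Boundedness excludes characters with complex parameter, and $\eta_\omega=\eta_{\omega'}$ precisely when $K\omega=K\omega'$, producing the parametrization by $V/K$.

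The two families are disjoint, since only the type-2 functions satisfy $T\psi=0$, and their union exhausts $\bigtriangleup(K,H_a)$ by the dichotomy on $\lambda$. The main obstacle is the rigorous passage from ``bounded $K$-spherical function with central character $\lambda$'' to ``normalized matrix coefficient of a single $K$-isotypic component of $\pi_\lambda$'': this requires combining the Godement characterization of spherical functions as positive-definite bi-$K$-invariant joint eigenfunctions of $\mathbb{D}_K(H_a)$ with the multiplicity-free decomposition of $F_\lambda$. It is at this step that the Gelfand-pair hypothesis does essential work, ruling out matrix coefficients that mix several isotypic components and ensuring that the $\phi_{\alpha,\lambda}$ are pairwise distinct as $\alpha$ varies.
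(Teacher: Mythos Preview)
The paper does not supply its own proof of this statement; the sentence immediately preceding the theorem cites external literature (``It is shown in [16] that every bounded $K$-spherical function is of type 1 or type 2''), so there is nothing in-paper to compare against. Your outline is essentially the standard argument found in those references, and the dichotomy on the central character, the Stone--von Neumann step, and the use of multiplicity-freeness to single out a unique $K$-isotypic component are all correct.

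There is, however, one genuine gap. In the $\lambda\neq 0$ case you invoke positive-definiteness of $\psi$ in order to run the Godement/GNS construction and realize $\psi$ as a matrix coefficient of a unitary representation. But the hypothesis is only that $\psi$ is a \emph{bounded} spherical function, and for a general Gelfand pair the class of bounded spherical functions may strictly contain the positive-definite ones. For Heisenberg (and more generally nilpotent) Gelfand pairs the two classes do coincide, but this is itself a theorem: one shows either that $L^1_K(H_a)$ is a symmetric Banach $*$-algebra, so that every character is hermitian and hence arises from a $*$-representation, or one analyses the joint eigenvalue equations for the generators of $\mathbb{D}_K(H_a)$ directly and verifies that every bounded solution already appears among the $\phi_{\alpha,\lambda}$. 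Either route is nontrivial and must be supplied; without it your argument classifies only the positive-definite spherical functions, not all bounded ones. A smaller instance of the same issue appears in the $\lambda=0$ case, where boundedness is precisely what forces the Fourier parameter $\omega$ to be real rather than complex; you call this ``classical'', which is fair, but the mechanism deserves a sentence.
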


Finally, we consider the two-step free nilpotent lie groups.
First Definition. Let $\mathcal{N}$ be the (unique up to isomorphism)
free two-step nilpotent Lie algebra with $n$ generators. The definition
using the universal property of the free nilpotent Lie algebra can be
found in [11, Chapter V §5]. Roughly speaking, $\mathcal{N}$ is a
(nilpotent)Lie algebra with $n$ generators $X_{1},\cdots X_{n}$, such that
the vectors $X_{1},\cdots X_{n}$ and $X_{i,j}=[X_{i},X_{j}],i<j$ form a
basis; we call this basis the canonical basis of $\mathcal{N}$.

We denote by $\mathcal{V}$ and $\mathcal{Z}$, the vectors spaces generated
by the families of vectors $X_{1},\cdots X_{n}$ and
$X_{i,j}=[X_{i},X_{j}], 1\leqslant i<j \leqslant n$ respectively; these
families become the canonical base of $\mathcal{V}$ and $\mathcal{Z}$.
Thus $\mathcal{N}=\mathcal{V}\bigoplus \mathcal{Z}$, and $\mathcal{Z}$
is the center of $\mathcal{N}$. With the canonical basis, the vector
space $\mathcal{Z}$ can be identified with the vector space of
antisymmetric $n\times n$-matrices $\mathcal{A}_{n}$. Let
$z=dimZ=n(n-1)/2$.

The connected simply connected nilpotent Lie group which corresponds to
$\mathcal{N}$ is called the free two-step nilpotent Lie group and is
denoted $F_{n}$. We denote by exp:$\mathcal{N}\rightarrow F(n)$the
exponential map.

In the following, we use the notations $X+A\in \mathcal{N}$,$exp(X+A)\in
F(n)$ when $X\in \mathcal{V},A\in \mathcal{Z}$. We write $n=2p^{'} or
2p^{'}+1$.

A Realization of $\mathcal{N}$. We now present here a realization of
$\mathcal{N}$, which will be helpful
to define more naturally the action of the orthogonal group and
representations of $F(n)$.

Let $(\mathcal{V},<,>)$ be an Euclidean space with dimension $n$. Let
$O(\mathcal{V})$ be the group of
orthogonal transformations of $\mathcal{V}$, and $SO(\mathcal{V})$ Its
special subgroup. Their common
Lie algebra denoted by $\mathcal{Z}$, is identified with the vector space
of antisymmetric transformations of $\mathcal{V}$. Let
$\mathcal{N}=\mathcal{V}\bigoplus \mathcal{Z}$ be the exterior direct sum
of the vector spaces $\mathcal{V}$ and $\mathcal{Z}$.

Let $[,]:\mathcal{V}\times \mathcal{V}\rightarrow \mathcal{Z}$ be the
bilinear application given by:

$[X,Y].(V)=<X,V>Y-<Y,V>X   \\ where X,Y,V\in \mathcal{V}$

We also denote by $[,]$ the bilinear application extended to
$\mathcal{N}\times \mathcal{N}\rightarrow \mathcal{N}$ by:

$[.,.]_{\mathcal{N}\times \mathcal{Z}}=[.,.]_{\mathcal{Z}\times \mathcal{N}}=0$
This application is a Lie bracket. It endows $\mathcal{V}$ with the
structure of a two-step nilpotent Lie algebra.

As the elements $[X,Y],X,Y\in \mathcal{V}$ generate the vector space
$\mathcal{Z}$, we also
define a scalar product $<，>$ on $\mathcal{Z}$ by:

$<[X,Y],[X^{'},Y^{'}]>=<X,X^{'}><Y,Y^{'}>-<X,Y^{'}><X^{'},Y>$

where $X,Y,X^{'},Y^{'}\in \mathcal{V}$.

It is easy to see $\mathcal{V}$ as a realization of $\mathcal{N}$ when
an orthonormal basis
$X_{1},\cdots X_{n}$ of $(\mathcal{V},<,>)$ is fixed.

We remark that $<[X,Y],[X^{'},Y^{'}]>=<[X,Y]X^{'},Y^{'}>$, and so we have
for an antisymmetric transformation $A\in \mathcal{Z}$, and for $X,Y\in
\mathcal{V}$:

$<A,[X,Y]>=<A.X,Y>$

This equality can also be proved directly using the canonical basis of
$\mathcal{N}$.

Actions of Orthogonal Groups. We denote by $O(\mathcal{V})$ the group of
orthogonal linear maps of $(\mathcal{V},<,>)$, and by $O_{n}$ the group of
orthogonal $n\times n$-matrices.

On $\mathcal{N}$ and $F(n)$. The group $O(\mathcal{V})$ acts on the
one hand by automorphism on $\mathcal{V}$, on the other hand by the
adjoint representation $Ad_{\mathcal{Z}}$ on $\mathcal{Z}$. We obtain an
action of $O(\mathcal{V})$ on $\mathcal{N}=\mathcal{V}\bigoplus
\mathcal{Z}$. Let us prove that this action respects the Lie bracket of
$\mathcal{N}$. It suffices to show for
$X,Y,Z\in \mathcal{V}$ and $k\in O(\mathcal{V})$:
\begin{equation}\label{l-invariant elements}
\begin{split}
&[k.X,k.Y](V)=<k.X,V>k.Y-<k.Y,V>k.X\\
&k.(<X,k^{t}.V>Y-<Y,k^{t}.V>X\\
&=k.[X,Y](k^{-1}.V)=Ad_{\mathcal{Z}}k.[X,Y].
\end{split}
\end{equation}

We then obtain that the group $O(\mathcal{V})$ and also its special
subgroup $SO(\mathcal{V})$.
acts by automorphism on the Lie algebra $\mathcal{N}$, and finally on the
Lie group $F(n)$.

Suppose an orthonormal basis $X_{1},\cdots X_{n}$ of $(\mathcal{V},<,>)$
is fixed; then the vectors
$X_{i,j}=[X_{i},X_{j}],1\leq i<j\leq n$, form an orthonormal basis of
$\mathcal{V}$ and we can identify:

the vector space $\mathcal{Z}$ and $\mathcal{A}_{n}$.

the group $O(\mathcal{V})$ with $O_{n}$.

the adjoint representation $Ad_{\mathcal{Z}}$ with the conjugate action of
$O_{n}$ and $\mathcal{A}_{n}: k.A=kAk^{-1}$, where $k\in O_{n}, A\in\mathcal{A}_{n}$.

Thus the group $O_{n}\sim O(\mathcal{V})$ acts on $\mathcal{V}\sim
\mathbb{R}^{n}$ and $\mathcal{Z}\sim \mathcal{A}_{n}$, and consequently on
$\mathcal{N}$. Those actions can be directly defined; and the equality
$[k.X,k.Y]=k.[X,Y],k\in O_{n},X,Y\in \mathcal{V}$, can then be computed.

On $\mathcal{A}_{n}$. Now we describe the orbits of the conjugate actions
of $O_{n}$ and $SO_{n}$ on $\mathcal{A}_{n}$. An arbitrary antisymmetric
matrix $A\in \mathcal{A}_{n}$ is $O_{n}$-conjugated to an antisymmetric
matrix $D_{2}(\wedge)$ where $\wedge=(\delta_{1},\cdots
,\delta_{p^{'}})\in \mathbb{R^{'}}$ and:

$D_{2}(\wedge)=\begin{bmatrix}
\ \delta_{1}J &  0   &  0   & 0      \\
0 & \ddots  & 0  &  0   \\
0      &  0   &\delta_{p^{'}}J  &   0       \\
0      &  0    &   0    &   (0)
\end{bmatrix}$

where $J:=\begin{bmatrix}
0      & 1      \\
-1      &  0
\end{bmatrix}$

((0) means that a zero appears only in the case $n=2p^{'}+1$) Furthermore,
we can assume that $\wedge$ is in $\overline{\mathcal{L}}$, where we
denote by $\mathcal{L}$ the set of $\wedge=(\delta_{1},\cdots
,\delta_{p^{'}})\in \mathbb{R^{'}}$ such that $\delta_{1}\geq \cdots
\delta_{p^{'}}\geq 0$.

Parameters. To each $\wedge\in \overline{\mathcal{L}}$, we
associate:$p_{0}$ the number of $\delta_{i}\neq0$, $p_{1}$ the number of
distinct $\delta_{i}\neq0$, and $\mu_{1},\cdots \mu_{p^{1}}$ such that:

$\{\mu_{1}> \mu_{2}> \cdots >\mu_{p_{1}}>0\}=\{\delta_{1}\geq
\delta_{2}\geq \cdots \geq \delta_{p_{0}}>0\}$

We denote by $m_{j}$ the number of $\delta_{i}$ such that
$\delta_{i}=\mu_{j}$, and we put

$m_{0}:=m_{0}^{'}:=0$ and for $j=1,\cdots p_{1} \
m_{j}^{'}:=m_{1}+\cdots+m_{j}$.

For $j=1,\cdots p_{1}$, let $pr_{j}$ be the orthogonal projection of
$\mathcal{V}$ onto the space generated by the vectors $X_{2i-1},X_{2i}$,
for $i=m_{j-1}^{'}+1,\cdots m_{j}^{'}$.

Let $\mathcal{M}$ be the set of $(r,\wedge)$ where $\wedge \in
\mathcal{L}$, and $r\geq 0$, such that $r=0$ if $2p_{0}=n$.

Expression of the bounded spherical functions. The bounded spherical
functions of $(K,F(n))$ for $K=O_{n}$, are parameterized by

$(r,\wedge)\in \mathcal{M}$ (with the previous notations
$p_{0},p_{1},\mu_{i},pr_{j}$ associated to $\wedge$),

$l\in \mathbb{N}^{p_{1}}$ if $\wedge\neq 0$, otherwise $\varnothing$.

Let $(r,\wedge)$,$l$ be such parameters. Then we have the following two
types of bounded $O(n)$-spherical functions:

For $n=exp(X+A)\in N$.

Type 1:$\phi^{r,\alpha,\lambda}(n)=\int_{K}e^{ir<X_{p}^{*},k.X
>}\omega_{\alpha,\lambda}(\Psi_{2}^{-1}(\overline{q_{1}}(k.n)))dk$.

Type 2:$\phi^{\upsilon}(n)=\int_{K}e^{ir<X_{p}^{*},k.X >}dk$.
Here $X_{p}^{*}$ is the unit $K_{\rho}$-fixed invariant vector.
For a Gelfand pair $(H_{p_{0}},K(m;p_{1};p_{0}))$, we have
$\omega_{\alpha,\lambda}$ is the "type 1" bounded $K(m;p_{1};p_{0})$-spherical
functions for the Heisenberg group $H_{p_{0}}$. We will introduce it next.
$\Psi_{2}$ is an isomorphism between $H_{p_{0}}$ with a group with respect
to $F(n)$, which will be introduced later.

We call $H_{p_{0}}$ the Heisenberg group with respect to $F(n)$.

We use the following law of the Heisenberg group $H_{p_{0}}$:

$\forall h=(z_{1},\ldots ,z_{p_{0}},t)$ ,
$h^{'}=(z_{1}^{'},\ldots ,z_{p_{0}}^{'},t^{'})\in \mathbb{H}^{p_{0}}=\mathbb{C}^{p_{0}}\times \mathbb{R}$

$h.h^{'}=(z_{1}+z_{1}^{'},\ldots ,z_{p_{0}}+z_{p_{0}}^{'},t+t^{'}+\frac{1}{2}\sum_{i=1}^{p_{0}}\mathfrak{F}z_{i}\overline{z}_{i}^{'})$

The unitary $p_{0}\times p_{0}$ matrix group $U_{p_{0}}$ acts by automorphisms on $\mathbb{H}^{p_{0}}$. Let us describe some subgroups of $U_{p_{0}}$. Let $p_{0},p_{1}\in \mathbb{N}$, and $m=(m_{1},\ldots ,m_{p_{1}})\in \mathbb{N}^{p_{1}}$ be fixed such that $\sum_{j=1}^{p_{1}}m_{j}=p_{0}$. Let $K(m;p_{1};p_{0})$ be the subgroup of $U_{p_{0}}$ given by:

$K(m;p_{1};p_{0})=U_{m_{1}}\times \ldots \times U_{m_{p_{1}}}$.

The expression of spherical functions of $(H_{p_{0}},K(m;p_{1};p_{0}))$ can be found in the same way as in the case $m=(p_{0})$, $p_{1}=1$
i.e. $K=U_{p_{0}}$ (cf.[12]).

Stability group $K_{\rho}=\{k\in K: k.\rho=\rho\}=\{k\in K\subset G: k.f\in F(n).f\}$. The aim of this paragraph is to describe the stability group $K_{\rho}$
of $\rho\in T_{rX_{p}}^{*}+D_{2}(\wedge)$.

Before this, let us recall that the orthogonal $2n\times 2n$ matrices which commutes with $D_{2}(1,\ldots ,1)$ have determinant one and form the group$Sp_{n}\bigcap O_{2n}$. This group is isomorphism to $U_{n}$; the isomorphism is denoted $\psi_{1}^{(n)}$, and satisfies:

$\forall k,X$: $\psi_{c}^{(n)}(k.X)=\psi_{1}^{(n)}(K)\psi_{c}^{(n)}(X)$,

where $\psi_{c}^{(n)}$ is the complexification :

$\psi_{c}^{(n)}(x_{1},y_{1};\ldots ;x_{n},y_{n})=(x_{1}+iy_{1},\ldots ,x_{n}+iy_{n})$.

Now, we can describe $K_{\rho}$:
\begin{prop}
Let $(r,\Lambda)\in \mathcal{M}$. Let $p_{0}$ be the number of $\lambda_{i}\neq 0$, where $\wedge=(\lambda_{1},\ldots ,\lambda_{p^{'}})$, and $p_{1}$ the number of distinct $\lambda_{i}\neq 0$. We set $\widetilde{\wedge}=(\lambda_{1},\ldots ,\lambda_{p_{0}})\in \mathbb{R}^{p_{0}}$.
Let $\rho\in T_{f}$ where $f=rX_{p}^{*}+D_{2}(\wedge)$.

If $\wedge=0$, then $K_{\rho}$ is the subgroup of $K$ such that $k.rX_{p}^{*}=rX_{p}^{*}$ for all $k\in K_{\rho}$.

If $\wedge\neq 0$, then $K_{\rho}$ is the direct product $K_{1}\times K_{2}$, where:

$K_{1}=\{k_{1}=\begin{bmatrix}
\widetilde{k_{1}}     &      0      \\
0 & Id
\end{bmatrix} \mid \widetilde{k_{1}}\in SO(2p_{0}) \  D_{2}(\widetilde{\wedge})\widetilde{k}_{1}=\widetilde{k}_{1}D_{2}(\widetilde{\wedge})\}$

$K_{2}=\{k_{2}=\begin{bmatrix}
Id    &      0      \\
0 & \widetilde{k_{1}}
\end{bmatrix} \mid \widetilde{k_{2}}.rX_{p}^{*}=rX_{p}^{*}\}$.

Furthermore, $K_{1}$ is isomorphism to the group $K(m;p_{0};p_{1})$.
\begin{proof}
We keep the notations of this proposition, and we set $A^{*}=D_{2}(\wedge)$ and $X^{*}=rX_{p}^{*}$. It is easy to prove:

$K_{\rho}=\{k\in K: kA^{*}=A^{*}k \ and \ kX^{*}=X^{*}k\}$.

If $\wedge=0$, since $K_{\rho}$ is the stability group in $K$ of $X^{*}\in \mathcal{V}^{*}\sim \mathbb{R}^{n}$. So the
first part of Proposition 2.12 is proved.

Let us consider the second part. $\wedge\neq 0$ so we have

$A^{*}=\begin{bmatrix}
D_{2}(\widetilde{\wedge})    &     0      \\
0      &  0
\end{bmatrix}$ \ with \ $D_{2}(\widetilde{\wedge})=\begin{bmatrix}
\mu_{1}J_{m_{1}}     & 0 & 0      \\
0 & \ddots & 0 \\
0      & 0  & \mu_{p_{1}}J_{m_{p_{1}}}
\end{bmatrix}$

Let $k\in K_{\rho}$. From above computation, the matrices $k$ and $A^{*}$ commute and we have:

$k=\begin{bmatrix}
\widetilde{k}_{1}   &    0      \\
0      & \widetilde{k}_{2}
\end{bmatrix}$ \ with \ $\widetilde{k}_{1}\in O(2p_{0})$ \ and \ $\widetilde{k}_{2}\in O(n-2p_{0})$

furthermore, $\widetilde{k}_{2}.X^{*}=X^{*}$, and the matrices $\widetilde{k}_{1}$ and $D_{2}(\widetilde{\wedge}^{*})$ commute. So
$\widetilde{k}_{1}$ is the diagonal block matrix, with block $[\widetilde{k}_{1}]_{j}\in O(m_{j})$ for $i=1, \ldots ,p_{1}$. Each block
$[\widetilde{k}_{1}]_{j}\in O(m_{j})$ commutes with $J_{m_{j}}$. So on one hand, we have det$[\widetilde{k}_{1}]_{j}=1$, det$\widetilde{k}_{1}=1$,
and one the other hand, $[\widetilde{k}_{1}]_{j}\in O(m_{j})$ corresponds to a unitary matrix $\psi_{1}^{(m_{j})}([\widetilde{k}_{1}]_{j})$. Now
we set for $k_{1}\in K_{1}$:

$\Psi_{1}(k_{1})=(\psi_{1}^{(m_{j})}([\widetilde{k}_{1}]_{1}),\ldots ,\psi_{1}^{(m_{j})}([\widetilde{k}_{1}]_{p_{1}}))$

$\Psi_{1}:K_{1}\longrightarrow K(m;p_{0};p_{1})$ is a group isomorphism.
\end{proof}
\end{prop}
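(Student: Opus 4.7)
The plan is to use the explicit characterization $K_{\rho}=\{k\in K: kA^{*}=A^{*}k \text{ and } k.X^{*}=X^{*}\}$ (which should follow from unraveling the defining condition $k.\rho=\rho$ on the coadjoint orbit, using that $K$ acts on $\mathcal{V}$ and $\mathcal{Z}$ separately and that $\rho$ lies in a translate of $T_{f}$). Once this characterization is in hand, the case $\Lambda=0$ is immediate since then $A^{*}=0$ and the first commutation relation is automatic, so $K_{\rho}$ is simply the stabilizer of $X^{*}=rX_{p}^{*}$ in $K$.

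For the main case $\Lambda\neq 0$, I would proceed in three steps. First, note that $A^{*}$ has the block form $\mathrm{diag}(D_{2}(\widetilde{\Lambda}),0)$ where $D_{2}(\widetilde{\Lambda})$ is invertible of size $2p_{0}$, while $X^{*}$ lies entirely in the last $n-2p_{0}$ coordinates (by choice of $X_{p}^{*}$). Since the image of $A^{*}$ and its kernel are orthogonal $k$-invariant subspaces (the invariance following from $kA^{*}=A^{*}k$), any $k\in K_{\rho}$ must be block-diagonal $k=\mathrm{diag}(\widetilde{k}_{1},\widetilde{k}_{2})$ with $\widetilde{k}_{1}\in O(2p_{0})$ and $\widetilde{k}_{2}\in O(n-2p_{0})$. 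The condition $k.X^{*}=X^{*}$ then reduces to $\widetilde{k}_{2}.X^{*}=X^{*}$, giving the factor $K_{2}$.

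Next, I would analyze the commutation $\widetilde{k}_{1}D_{2}(\widetilde{\Lambda})=D_{2}(\widetilde{\Lambda})\widetilde{k}_{1}$. Since $D_{2}(\widetilde{\Lambda})$ has $p_{1}$ distinct spectral blocks of the form $\mu_{j}J_{m_{j}}$, the eigenspace decomposition forces $\widetilde{k}_{1}$ to be block-diagonal with blocks $[\widetilde{k}_{1}]_{j}\in O(2m_{j})$, each commuting with $J_{m_{j}}$. At this point I would invoke the isomorphism $\psi_{1}^{(m_{j})}:Sp_{m_{j}}\cap O(2m_{j})\xrightarrow{\sim}U_{m_{j}}$ recalled in the text, which identifies the commutant of $J_{m_{j}}$ in $O(2m_{j})$ with $U_{m_{j}}$. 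Assembling the blocks, $\Psi_{1}(k_{1}):=(\psi_{1}^{(m_{1})}([\widetilde{k}_{1}]_{1}),\ldots,\psi_{1}^{(m_{p_{1}})}([\widetilde{k}_{1}]_{p_{1}}))$ gives a map $K_{1}\to K(m;p_{0};p_{1})=U_{m_{1}}\times\cdots\times U_{m_{p_{1}}}$, which is a group homomorphism because each $\psi_{1}^{(m_{j})}$ is, and it is bijective by the block decomposition. Finally $K_{\rho}=K_{1}\times K_{2}$ as the two block factors act independently.

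The main technical obstacle I anticipate is justifying that commutation with $J_{m_{j}}$ forces the block into $Sp_{m_{j}}\cap O(2m_{j})$ rather than something larger, together with checking that the determinant-one condition is automatic (as the text notes $\det[\widetilde{k}_{1}]_{j}=1$). This relies on the specific symplectic form encoded by $J_{m_{j}}$ being preserved by any orthogonal commutant, which follows from writing out what $kJ=Jk$ and $k^{t}k=\mathrm{Id}$ jointly impose; this is where care is needed to avoid missing a sign or losing a connected component.
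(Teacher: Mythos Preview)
Your proposal is correct and follows essentially the same route as the paper's proof: reduce to the characterization $K_{\rho}=\{k\in K: kA^{*}=A^{*}k,\ k.X^{*}=X^{*}\}$, use the image/kernel splitting of $A^{*}$ to get the block-diagonal form $k=\mathrm{diag}(\widetilde{k}_{1},\widetilde{k}_{2})$, then decompose $\widetilde{k}_{1}$ further via the distinct $\mu_{j}$-blocks and apply the isomorphism $\psi_{1}^{(m_{j})}:Sp_{m_{j}}\cap O(2m_{j})\to U_{m_{j}}$ block by block. Your flagged obstacle about the determinant-one condition is exactly the point the paper also singles out, and your suggested justification (that $kJ=Jk$ together with $k^{t}k=\mathrm{Id}$ forces $k\in Sp\cap O$, hence $\det k=1$) is the right one.
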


Quotient group $\overline{F(n)}=F(n)/ker\rho$. In this paragraph, we describe
the quotient groups $F(n)/ker\rho$ and $G/ker\rho$, for some $\rho\in
\widehat{F(n)}$. This will permit in the next paragraph to reduce the
construction of the bounded spherical functions on $F(n)$ to known
questions on Euclidean and Heisenberg groups. For a representation
$\rho\in \widehat{F(n)}$, we will denote by:

$ker\rho$ the kernel of $\rho$.

$\overline{F(n)}=F(n)/ker\rho$ its quotient group and $\overline{N}$ its lie algebra.

$(\mathcal{H},\overline{\rho})$ the induced representation on $\overline{F(n)}$.

$\overline{n}\in \overline{F(n)}$ and $\overline{Y}\in
\overline{\mathcal{N}}$ the image of $n\in F(n)$ and $Y\in \mathcal{N}$
respectively by the canonical projections $F(n)\rightarrow \overline{F(n)}$ and
$\mathcal{N}\rightarrow \overline{\mathcal{N}}$

Now, with the help of the canonical basis, we choose

$E_{1}=\mathbb{R}X_{1}\bigoplus \cdots \bigoplus  \mathbb{R}X_{2p_{0}-1}$

as the maxiamal totally isotropic space for $\omega_{D_{2}(\wedge),r}$.
The quotient lie algebra $\overline{\mathcal{N}}$ has the natural
basis:           .
You can refer to [12].

Here, we have denoted
$\left |\wedge  \right |=(\sum_{j=1}^{p^{'}}\lambda_{j}^{2})^{\frac{1}{2}}=\left |D_{2}(\wedge)
\right |$ (for the Euclidean norm on $\mathcal{Z}$.

Let $\overline{\mathcal{N}_{1}}$ be the Lie sub-algebra of
$\overline{\mathcal{N}}$, with basis $\overline{X_{1}},\cdots
,\overline{X_{2p_{0}}},\overline{B}$, and $\overline{N_{1}}$ be its
corresponding connected simply connected nilpotent lie group. We define
the mapping : $\Psi_{2}:\mathbb{H}^{p_{0}}\rightarrow
\overline{\mathcal{N}_{1}}$ for $h=(x_{1}+iy_{1},\cdots
,x_{p_{0}}+iy_{p_{0}},t)\in H_{p_{0}}$ by:

$\Psi_{2}(h)=exp(\sum_{j=1}^{p_{0}}\sqrt{\frac{\left |\wedge  \right
|}{\lambda_{j}}}(x_{j}\overline{X_{2j-1}}+y_{j}\overline{X_{2j}})+t\overline{B})$

We compute that each lie bracket of two vectors of this basis equals
zeros, except:

$[\overline{X_{2i-1}},\overline{X_{2i}}]=\frac{\lambda_{i}}{\left |\wedge
\right |}\overline{B},\ i=1, \cdots ,p_{0}$.

From this, it is easy to see:
\begin{theorem}\label{equal}
$\Psi_{2}$ is a group isomorphism between $\overline{N_{1}}$ and
$H_{p_{0}}$
\end{theorem}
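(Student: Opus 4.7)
The plan is to show $\Psi_{2}$ is a diffeomorphism and then check the group-law compatibility via the Baker--Campbell--Hausdorff (BCH) formula, which collapses to its 2-step form because $\overline{\mathcal{N}_{1}}$ is 2-step nilpotent.

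First I would observe that on the basis $\overline{X_{1}},\dots,\overline{X_{2p_{0}}},\overline{B}$ of $\overline{\mathcal{N}_{1}}$, the only non-vanishing brackets are $[\overline{X_{2i-1}},\overline{X_{2i}}]=\tfrac{\lambda_{i}}{|\Lambda|}\overline{B}$, so $\overline{B}$ is central and $\overline{\mathcal{N}_{1}}$ is 2-step nilpotent. Since $\overline{N_{1}}$ is connected, simply connected and nilpotent, $\exp:\overline{\mathcal{N}_{1}}\to \overline{N_{1}}$ is a diffeomorphism, and the linear map
\[
(x_{1},y_{1},\dots,x_{p_{0}},y_{p_{0}},t)\;\longmapsto\;\sum_{j=1}^{p_{0}}\sqrt{\tfrac{|\Lambda|}{\lambda_{j}}}(x_{j}\overline{X_{2j-1}}+y_{j}\overline{X_{2j}})+t\overline{B}
\]
is a linear isomorphism $\mathbb{R}^{2p_{0}+1}\to\overline{\mathcal{N}_{1}}$ (as $\lambda_{j}>0$ for $j\leq p_{0}$). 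Composition shows $\Psi_{2}$ is a diffeomorphism between $\mathbb{H}^{p_{0}}$ and $\overline{N_{1}}$.

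Next I would verify the homomorphism property. Writing $V(h):=\sum_{j}\sqrt{|\Lambda|/\lambda_{j}}(x_{j}\overline{X_{2j-1}}+y_{j}\overline{X_{2j}})$ so that $\Psi_{2}(h)=\exp(V(h)+t\overline{B})$, the 2-step BCH formula gives
\[
\Psi_{2}(h)\Psi_{2}(h')=\exp\!\Bigl(V(h)+V(h')+(t+t')\overline{B}+\tfrac{1}{2}[V(h),V(h')]\Bigr).
\]
Because brackets between different $j$-blocks vanish, only diagonal terms survive, and
\[
[V(h),V(h')]=\sum_{j=1}^{p_{0}}\tfrac{|\Lambda|}{\lambda_{j}}(x_{j}y_{j}'-y_{j}x_{j}')[\overline{X_{2j-1}},\overline{X_{2j}}]
=\sum_{j=1}^{p_{0}}(x_{j}y_{j}'-y_{j}x_{j}')\overline{B}.
\]
The scale factor $\sqrt{|\Lambda|/\lambda_{j}}$ is precisely what makes the $|\Lambda|/\lambda_{j}$ in front cancel the $\lambda_{j}/|\Lambda|$ appearing in the structure constant. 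Since $V(h)+V(h')=V(h+h')$ in the obvious coordinate sense, and $x_{j}y_{j}'-y_{j}x_{j}'$ matches (up to the paper's sign convention for $\mathfrak{F}z_{i}\overline{z}_{i}'$) the imaginary-part term in the Heisenberg law, the right-hand side equals $\Psi_{2}(h\cdot h')$.

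The main obstacle is essentially bookkeeping: verifying that the chosen normalization $\sqrt{|\Lambda|/\lambda_{j}}$ is exactly the one that matches the Heisenberg cocycle, and tracking the sign convention of $\mathfrak{F}z_{i}\overline{z}_{i}'$ so that the BCH-produced term $\tfrac{1}{2}\sum_{j}(x_{j}y_{j}'-y_{j}x_{j}')$ coincides with $\tfrac{1}{2}\sum_{i}\mathfrak{F}z_{i}\overline{z}_{i}'$. Once this identification is made, combining the diffeomorphism property with the homomorphism property yields the group isomorphism.
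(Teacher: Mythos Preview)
Your proposal is correct and is precisely the natural way to fill in this statement. The paper itself gives no proof beyond the remark ``From this, it is easy to see'' following the computation of the only nontrivial brackets $[\overline{X_{2i-1}},\overline{X_{2i}}]=\tfrac{\lambda_i}{|\Lambda|}\,\overline{B}$, so your BCH verification is exactly the intended one-line check spelled out in full; your observation that the scaling $\sqrt{|\Lambda|/\lambda_j}$ is chosen to cancel the structure constant $\lambda_j/|\Lambda|$ is the whole point, and the residual sign issue you flag is indeed just a matter of the paper's convention for $\mathfrak{F}z_i\overline{z}_i'$ (compare the earlier convention $\omega(z,z')=-\mathrm{Im}\langle z,z'\rangle$).
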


Finally, we note that

$\overline{q_{1}}:F(n)\rightarrow \overline{N_{1}}$ is the canonical projection.

Remark that the (Kohn) sub-Laplacian is $L:=-\sum_{i=1}^{p}X_{i}^{2}$ (cf. section 6 in [12]). Then we can deduce that

$L.\phi^{r,\alpha,\lambda}=(\sum_{j=1}^{p_{1}}\delta_{j}(2\alpha_{j}+m_{j})+r^{2})\phi^{r,\alpha,\lambda}$.

\section{some knowledge for the generalized binomial coefficients}

Suppose $(K,H_{a})$ is a Gelfand pair.

Decomposition of $C[V]$.

We decompose $C[V]$ into $K$-irreducible subspaces $P_{\alpha}$,

$C[V]=\sum_{\alpha\in \wedge}P_{\alpha}$ where $\wedge$ is some countably infinite index set. Since the representation of $K$ on $C[V]$ preserves the space
$P_{m}(V)$ of homogeneous polynomials of degree m, each $P_{\alpha}$ is a subspace of some $P_{m}(V)$. We write $\left |\alpha \right |$ for the degree of homogeneity of the polynomials in $P_{\alpha}$, so that $P_{\alpha}\subset P_{\left |\alpha \right |}(V)$. We will write $d_{\alpha}$ for the dimension of
$P_{\alpha}$ and denote by $0\in \wedge$ the index for the scalar polynomials $P_{0}=P_{0}(V)=C$.

For $f:H_{a}\rightarrow C$ we define $f^{\circ}: V\rightarrow C$ by $f^{\circ}(z):=f(z,0)$. We denote $\phi_{\alpha}^{\circ}(z):=\phi_{\alpha,1}(z,0)$.

\begin{theorem}\label{equal}
$\{\phi_{\alpha}^{\circ}\mid \alpha\in \wedge\}$ is a complete orthogonal system in $L_{K}^{2}(V)$ with
$\left \|\phi_{\alpha}^{\circ}  \right \|_{2}^{2}=\frac{(2\pi)^{a}}{d_{\alpha}}$.
\end{theorem}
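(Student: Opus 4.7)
The plan is to realize $\phi_\alpha^\circ$ as a normalized matrix coefficient of the Fock representation $\pi_1$ of $H_a$ and to deduce both orthogonality and the norm formula from the square-integrability modulo the centre of $\pi_1$ (the Moore--Wolf theorem recalled above). Completeness will then follow by transferring the $K$-isotypic decomposition of $F_1$ back to $L^2_K(V)$ via the Weyl transform.

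First, since $(K,H_a)$ is a Gelfand pair, the decomposition $\mathbb{C}[V] = \bigoplus_{\alpha\in \wedge} P_\alpha$ extends to an orthogonal Hilbert sum $F_1 = \bigoplus_\alpha P_\alpha$ into pairwise inequivalent, finite-dimensional $K$-irreducibles. In this realization the type 1 spherical function has the trace-like expression
\begin{equation*}
\phi_{\alpha,1}(z,t) \;=\; \frac{1}{d_\alpha}\sum_{i=1}^{d_\alpha}\langle \pi_1(z,t) e_i^\alpha,\, e_i^\alpha\rangle_{F_1},
\end{equation*}
where $\{e_i^\alpha\}$ is any orthonormal basis of $P_\alpha$. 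Setting $t=0$ and invoking Schur orthogonality for the square-integrable (mod centre) representation $\pi_1$, namely
\begin{equation*}
\int_V \langle \pi_1(z,0)u_1,\, v_1\rangle\, \overline{\langle \pi_1(z,0)u_2,\, v_2\rangle}\, dz \;=\; (2\pi)^a\, \langle u_1, u_2\rangle_{F_1}\, \overline{\langle v_1, v_2\rangle_{F_1}},
\end{equation*}
one computes $\langle \phi_\alpha^\circ, \phi_\beta^\circ\rangle_{L^2(V)}$ directly: the orthogonality $P_\alpha \perp P_\beta$ for $\alpha\neq \beta$ kills all cross terms, while for $\alpha=\beta$ the sum collapses to $d_\alpha^{-2}\cdot d_\alpha\cdot (2\pi)^a = (2\pi)^a/d_\alpha$, giving simultaneously the pairwise orthogonality and the asserted norm.

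For completeness I would invoke the Weyl transform $W:L^2(V)\to\mathrm{HS}(F_1)$, $W(f)=\int_V f(z)\pi_1(z,0)^{*}\,dz$, which (up to the constant $(2\pi)^a$) is a unitary isomorphism intertwining the $K$-action on $L^2(V)$ with conjugation by $W_1(K)$ on Hilbert--Schmidt operators. Under $W$, $L^2_K(V)$ maps onto the closed subspace of $W_1(K)$-commuting operators; by Schur's lemma together with the multiplicity-free decomposition, this subspace is the closed span of the finite-rank orthogonal projections $Q_\alpha:F_1\to P_\alpha$. A short Fock-side computation identifies $W(\phi_\alpha^\circ)$ with a scalar multiple of $Q_\alpha$, and density of $\bigoplus_\alpha P_\alpha$ in $F_1$ transfers back to density of $\{\phi_\alpha^\circ\}$ in $L^2_K(V)$. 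The main obstacle is the bookkeeping of normalization constants (the Fock measure $d\widetilde{\omega}_1$, the Plancherel factor $(2\pi)^a$, and the dimension factor $d_\alpha$ in the trace formula for $\phi_{\alpha,1}$); once these are tracked consistently, both assertions of the theorem fall out of the Schur orthogonality identity displayed above.
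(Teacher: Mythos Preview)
The paper does not supply a proof of this theorem; it is quoted in Section~3 as background from \cite{13} (Benson--Jenkins--Ratcliff), so there is no ``paper's own proof'' to compare against. Your argument is essentially the standard one underlying that reference: the trace formula $\phi_{\alpha,1}(z,t)=d_\alpha^{-1}\operatorname{tr}\bigl(\pi_1(z,t)\big|_{P_\alpha}\bigr)$, Schur orthogonality for the square-integrable (mod centre) representation $\pi_1$, and the Weyl transform identification $L^2_K(V)\cong\operatorname{HS}(F_1)^{K}$ for completeness. All three ingredients are correct as you state them, and the computation $d_\alpha^{-2}\cdot(2\pi)^a\cdot d_\alpha=(2\pi)^a/d_\alpha$ is the right bookkeeping once the formal degree of $\pi_1$ is fixed to $(2\pi)^{-a}$ in the chosen normalization of Lebesgue measure on $V$.

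One point you should make fully rigorous rather than leave as a remark: the Schur orthogonality constant $(2\pi)^a$ is exactly the content you are asserting, so you must actually derive it from the Fock-space normalization $d\widetilde{\omega}_1=(2\pi)^{-a}e^{-|w|^2/2}\,dw\,d\overline{w}$ used in the paper, not merely flag it as ``bookkeeping.'' A clean way is to evaluate both sides of the Schur identity on the vacuum vector $u_1=v_1=u_2=v_2=1\in P_0$, where the left side becomes $\int_V e^{-|z|^2/2}\,dz=(2\pi)^a$ directly. With that pinned down, your proof is complete and matches the argument in the cited source.
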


For $\alpha,\beta\in \wedge$, we have a well defined number
$\begin{bmatrix}
\alpha\\
\beta
\end{bmatrix}$.

We call the value generalized binomial coefficients for the action of $K$ on $V$ (cf.[24]).  We have two important results, they are as follows:

$\sum_{\left |\beta \right |=\left |\alpha \right |-1}\begin{bmatrix}
\alpha\\
\beta
\end{bmatrix}=\left |\alpha \right |$

$\sum_{\left |\beta \right |=\left |\alpha \right |+1}\frac{d_{\beta}}{d_{\alpha}}\begin{bmatrix}
\beta\\
\alpha
\end{bmatrix}=\left |\alpha \right |+a$.

\begin{definition}
Given a function $g$ on $\wedge$, $D^{+}g$ and $D^{-}g$ are the functions on $\wedge$ defined by

$D^{+}g(\alpha)=\sum_{\left |\beta \right |=\left |\alpha \right |+1}\frac{d_{\beta}}{d_{\alpha}}\begin{bmatrix}
\beta\\
\alpha
\end{bmatrix}g(\beta)-(\left |\alpha \right |+a)g(\alpha)$

=$\sum_{\left |\beta \right |=\left |\alpha \right |+1}\frac{d_{\beta}}{d_{\alpha}}\begin{bmatrix}
\beta\\
\alpha
\end{bmatrix}(g(\beta)-g(\alpha))$

$D^{-}g(\alpha)=\left |\alpha \right |g(\alpha)-\sum_{\left |\beta \right |=\left |\alpha \right |-1}\begin{bmatrix}
\alpha\\
\beta
\end{bmatrix}g(\beta)$

=$\sum_{\left |\beta \right |=\left |\alpha \right |-1}\begin{bmatrix}
\alpha\\
\beta
\end{bmatrix}(g(\alpha)-g(\beta))$

for $\left |\alpha \right |>0$, and $D^{-}g(0)=0$.
\end{definition}

Also we can compute $\gamma \phi_{\alpha}^{\circ}=-(D^{+}-D^{-})\phi_{\alpha}^{\circ}$,

$\gamma \phi_{\alpha,\lambda}=-\frac{1}{\left |\lambda \right |}(D^{+}-D^{-})\phi_{\alpha,\lambda}$.

Thus we can compute
\begin{equation}\label{l-invariant elements}
\begin{split}
\partial_{\lambda}\phi_{\alpha,\lambda}=
\begin{cases}
(\frac{1}{\lambda})D^{-}\phi_{\alpha,\lambda}-(\frac{\gamma}{2})\phi_{\alpha,\lambda}+it\phi_{\alpha,\lambda} \\
(\frac{1}{\lambda})D^{+}\phi_{\alpha,\lambda}+(\frac{\gamma}{2})\phi_{\alpha,\lambda}+it\phi_{\alpha,\lambda}
\end{cases}
\end{split}
\end{equation}
for $\lambda>0$.

and similarly
\begin{equation}\label{l-invariant elements}
\begin{split}
\partial_{\lambda}\phi_{\alpha,\lambda}=
\begin{cases}
(\frac{1}{\lambda})D^{-}\phi_{\alpha,\lambda}+(\frac{\gamma}{2})\phi_{\alpha,\lambda}+it\phi_{\alpha,\lambda} \\
(\frac{1}{\lambda})D^{+}\phi_{\alpha,\lambda}-(\frac{\gamma}{2})\phi_{\alpha,\lambda}+it\phi_{\alpha,\lambda}
\end{cases}
\end{split}
\end{equation}
for $\lambda<0$.

Equivalently
\begin{equation}\label{l-invariant elements}
\begin{split}
(\frac{\gamma}{2}+it)\phi_{\alpha,\lambda}=\begin{cases}
(\partial_{\lambda}-\frac{1}{\lambda}D^{+})\phi_{\alpha,\lambda},  & \mbox{for }\lambda \mbox{is greater than 0} \\
(\partial_{\lambda}-\frac{1}{\lambda}D^{-})\phi_{\alpha,\lambda},  & \mbox{for }\lambda \mbox{is smaller than 0}
\end{cases}
\end{split}
\end{equation}
and
\begin{equation}\label{l-invariant elements}
\begin{split}
(\frac{\gamma}{2}-it)\phi_{\alpha,\lambda}=\begin{cases}
-(\partial_{\lambda}-\frac{1}{\lambda}D^{-})\phi_{\alpha,\lambda},  & \mbox{for }\lambda \mbox{is greater than 0} \\
-(\partial_{\lambda}-\frac{1}{\lambda}D^{+})\phi_{\alpha,\lambda},  & \mbox{for }\lambda \mbox{is smaller than 0}
\end{cases}
\end{split}
\end{equation}
\begin{definition}
We say that a function $F:\wedge \rightarrow C$ is rapidly decreasing if for each $N\in Z^{+}$, there is a constant $C_{N}$ for which

$\left |F(\alpha) \right |\leq \frac{C_{N}}{(2\left |\alpha \right |+a)^{N}}$.
\end{definition}

\begin{theorem}\label{equal}
[13] If $f\in \varphi_{K}(V)$ then $\widehat{f}$ is rapidly decreasing on $\wedge$. Conversely, if $F$ is rapidly decreasing on $\wedge$ then $F=\widehat{f}$
for some $f\in \varphi_{K}(V)$. Moreover, the map

$\land:\varphi_{K}(V)\rightarrow \{F\mid F $ is rapidly decreasing on $\wedge\}$
is a bijection.
\end{theorem}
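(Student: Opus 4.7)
The plan is to exploit the two ``duality'' identities developed in Section 3: first, the Heisenberg sub-Laplacian gives an operator $L$ on $V$ (take $U$ at $\lambda=1$, restricted to $t=0$) satisfying $L\phi_\alpha^\circ = -(2|\alpha|+a)\phi_\alpha^\circ$; and second, multiplication by $\gamma(z)=|z|^2/2$ on $V$ dualizes to a difference operator on $\wedge$ via $\gamma\phi_\alpha^\circ = -(D^+-D^-)\phi_\alpha^\circ$. These let us trade smoothness and polynomial growth of $f$ for decay of $\widehat{f}$, and vice versa.

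For the forward direction, given $f\in \varphi_K(V)$, note that $L^N f\in\varphi_K(V)\subset L^1_K(V)$ for every $N$. The self-adjointness of $L$ with respect to the inner product from Theorem~3.1 yields
\[
\widehat{L^N f}(\alpha) \;=\; (-1)^N (2|\alpha|+a)^N\,\widehat{f}(\alpha).
\]
Since the bounded spherical functions satisfy $|\phi_\alpha^\circ(z)|\le 1$, we get $(2|\alpha|+a)^N|\widehat{f}(\alpha)|\le \|L^N f\|_1$, which is exactly the rapid decay.

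For the converse, given a rapidly decreasing $F$ on $\wedge$, I would form the candidate
\[
f(z)\;:=\;\sum_{\alpha\in\wedge} F(\alpha)\,\frac{d_\alpha}{(2\pi)^a}\,\phi_\alpha^\circ(z).
\]
Since $d_\alpha$ is bounded by a polynomial in $|\alpha|$ (it is the dimension of a subspace of homogeneous polynomials of degree $|\alpha|$) and $|\phi_\alpha^\circ|\le 1$, rapid decay of $F$ gives absolute uniform convergence, so $f$ is continuous and $K$-invariant. To upgrade to $f\in\varphi_K(V)$ one must bound $p(z)\,\partial^\beta f$ for every polynomial $p$ and multi-index $\beta$. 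I would first prove this for $K$-invariant $p$ and $K$-invariant constant-coefficient $\partial^\beta$ by writing them as polynomials in $\gamma$ and $L$, then applying termwise the identities $\gamma\phi_\alpha^\circ = -(D^+-D^-)\phi_\alpha^\circ$ and $L\phi_\alpha^\circ = -(2|\alpha|+a)\phi_\alpha^\circ$ to shift the action onto $F$; each shift costs only polynomial growth in $|\alpha|$, which is absorbed by the rapid decay of $F$. The identity $\widehat{f}=F$ then follows from orthogonality of $\{\phi_\alpha^\circ\}$ and dominated convergence.

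The main obstacle is the passage from $K$-invariant polynomial/derivative operators to \emph{all} polynomials $p(z)$ and multi-indices $\beta$, as needed for the full family of Schwartz seminorms. The clean way to handle this is to $K$-average: decompose an arbitrary operator into its $K$-isotypic pieces and use the fact that on a $K$-invariant function only the $K$-invariant component contributes, together with inductive estimates on $\partial^\beta \phi_\alpha^\circ(z)$ in terms of $|\alpha|$, obtained by commuting $\partial^\beta$ past the recursion for $\phi_\alpha^\circ$. These derivative estimates, rather than the algebraic duality, form the technical heart of the argument.
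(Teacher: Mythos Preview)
The paper does not prove this theorem; it is quoted from reference [13] (Benson--Jenkins--Ratcliff) and used as input for the main result on $(O(n),F(n))$. So there is no in-paper proof to compare against. That said, your outline is essentially the argument of [13], and indeed your forward direction (pair $f$ with $L^N\phi_\alpha^\circ$ and move $L^N$ onto $f$ by self-adjointness) is exactly the mechanism this paper reuses in Section~4 for the $F(n)$ case.

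On the converse, your candidate $f=\sum_\alpha d_\alpha(2\pi)^{-a}F(\alpha)\phi_\alpha^\circ$ and the plan to transfer $\gamma$ and $L$ onto $F$ via the difference operators is correct. The ``main obstacle'' you flag is smaller than you suggest: you do not need estimates on $\partial^\beta\phi_\alpha^\circ$ for arbitrary $\beta$. Since $f$ is $K$-invariant, the Schwartz seminorms on $\varphi_K(V)$ are controlled by the $K$-invariant ones, e.g.\ $\sup_z(1+|z|^2)^N|\Delta^M f(z)|$ (polynomial weights can be replaced by powers of $1+|z|^2$, and arbitrary derivatives by powers of the Laplacian via elliptic/Sobolev estimates). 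These are exactly the seminorms your $\gamma$--$L$ calculus handles termwise. So the ``$K$-average and isotypic decomposition'' route you sketch is unnecessary; the reduction to $K$-invariant seminorms is the whole point, and once you accept it the argument is complete.
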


\begin{lemma}
[13] Let $F$ be a rapidly decreasing function on $\wedge$ and $G$ be a bounded function on $\wedge$. Then

$\sum_{\alpha\in \wedge}d_{\alpha}F(\alpha)D^{+}G(\alpha)=-\sum_{\alpha\in \wedge}d_{\alpha}(D^{-}+a)F(\alpha)G(\alpha)$,

$\sum_{\alpha\in \wedge}d_{\alpha}F(\alpha)D^{-}G(\alpha)=-\sum_{\alpha\in \wedge}d_{\alpha}(D^{+}+a)F(\alpha)G(\alpha)$,
\end{lemma}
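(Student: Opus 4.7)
The plan is to prove both identities directly from the definitions of $D^+$ and $D^-$, by expanding, exchanging the order of summation over pairs $(\alpha,\beta)$ with $|\beta|=|\alpha|\pm1$, and then repackaging the resulting inner sums in terms of the other operator. For the first identity, I would start from
\[
\sum_{\alpha\in \wedge} d_{\alpha}F(\alpha)D^{+}G(\alpha)
= \sum_{\alpha} d_{\alpha}F(\alpha)\!\!\sum_{|\beta|=|\alpha|+1}\!\!\frac{d_{\beta}}{d_{\alpha}}\begin{bmatrix}\beta\\ \alpha\end{bmatrix}G(\beta)
- \sum_{\alpha} d_{\alpha}(|\alpha|+a)F(\alpha)G(\alpha),
\]
swap the order of summation in the double sum so that $\beta$ is the outer variable and $\alpha$ ranges over $|\alpha|=|\beta|-1$, and rename $\beta\leftrightarrow\alpha$. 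The inner sum becomes $\sum_{|\beta|=|\alpha|-1}\begin{bmatrix}\alpha\\ \beta\end{bmatrix}F(\beta)$, which by the definition of $D^-$ equals $|\alpha|F(\alpha)-D^{-}F(\alpha)$. Substituting, the $|\alpha|F(\alpha)G(\alpha)$ contributions cancel and I am left with
\[
-\sum_{\alpha}d_{\alpha}G(\alpha)D^{-}F(\alpha)-a\sum_{\alpha}d_{\alpha}F(\alpha)G(\alpha)
=-\sum_{\alpha}d_{\alpha}(D^{-}+a)F(\alpha)\,G(\alpha),
\]
as required.

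The second identity is proved by the same manipulation in the reverse direction: expand $D^{-}G(\alpha)$, swap summation so that the pair $(\alpha,\beta)$ with $|\alpha|=|\beta|+1$ is indexed by $\beta$ first. After relabelling, the inner sum $\sum_{|\beta|=|\alpha|+1}d_{\beta}\begin{bmatrix}\beta\\ \alpha\end{bmatrix}F(\beta)$ matches, via the definition of $D^+$, the expression $d_{\alpha}[D^{+}F(\alpha)+(|\alpha|+a)F(\alpha)]$. The $|\alpha|F(\alpha)G(\alpha)$ terms again cancel and one obtains $-\sum_{\alpha}d_{\alpha}(D^{+}+a)F(\alpha)G(\alpha)$.

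The main technical point — and the only step that is not bookkeeping — is to justify the interchange of summation. Here I would invoke the two identities recorded just before Definition 3.2: $\sum_{|\beta|=|\alpha|-1}\begin{bmatrix}\alpha\\ \beta\end{bmatrix}=|\alpha|$ and $\sum_{|\beta|=|\alpha|+1}\frac{d_{\beta}}{d_{\alpha}}\begin{bmatrix}\beta\\ \alpha\end{bmatrix}=|\alpha|+a$. These immediately give the pointwise bounds $|D^{\pm}G(\alpha)|\leq 2(|\alpha|+a)\|G\|_{\infty}$, so each one-dimensional sum involves the product of a rapidly decreasing function (all polynomial weights $d_{\alpha}(1+|\alpha|)^{N}F(\alpha)$ are summable by the defining property of rapid decrease) against a polynomially bounded factor, hence is absolutely convergent. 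The same identities, applied to the double sum $\sum_{\alpha}\sum_{|\beta|=|\alpha|+1}d_{\beta}\begin{bmatrix}\beta\\ \alpha\end{bmatrix}|F(\alpha)||G(\beta)|$, bound it by $\|G\|_{\infty}\sum_{\alpha}d_{\alpha}(|\alpha|+a)|F(\alpha)|<\infty$, which is precisely what is needed to apply Fubini and legitimize the swap. Once this is in place, the computation sketched above goes through verbatim.
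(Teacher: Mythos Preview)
Your proof is correct. Note, however, that the paper does not supply its own proof of this lemma: it is quoted from reference~[13] (Benson--Jenkins--Ratcliff) and stated without argument. There is therefore nothing to compare your approach against in this paper.

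For what it is worth, your argument is the natural one and is essentially the proof given in the cited source: expand $D^{+}G$ (resp.\ $D^{-}G$), use absolute convergence to interchange the order of the double sum over pairs $(\alpha,\beta)$ with $|\beta|=|\alpha|\pm 1$, and then recognise the resulting inner sum in terms of $D^{-}F$ (resp.\ $D^{+}F$) via the definitions. Your justification of the interchange---bounding $|D^{\pm}G(\alpha)|$ by $2(|\alpha|+a)\|G\|_{\infty}$ using the two binomial-coefficient identities and combining this with the rapid decrease of $F$ (which also absorbs the polynomial growth of $d_{\alpha}$)---is exactly the right ingredient and is the only non-formal step.
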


\section{The proof of the third main theorem}

In this section, we identity $H_{a}$ wtih $H_{p_{0}}$ and $\phi_{\alpha,\lambda}$ with $\omega_{\alpha,\lambda}$.

\begin{definition}
Let $G$ be a function on $\Delta (O(n),F(n))$. We say that $G$ is rapidly decreasing on $\Delta (O(n),F(n))$ if

1 $G$ is continuous on $\Delta (O(n),F(n))$.

2 The function $G_{0}$ on $R$ defined by $G_{0}(r)=G(\phi^{r})$ belongs to $\phi(R)$.

3 The map $\lambda\rightarrow G(\phi^{r,\alpha,\lambda})$ is smooth on $R^{\times}=(-\infty,0)\cup (0,\infty)$ for each fixed $\alpha\in \wedge$ and $r\in R$.

4 for each $m,N\geq 0$, there exists a constant $C_{m,N}$ for which

$\left |\partial_{\lambda}^{m}G(\phi^{r,\alpha,\lambda}) \right |\leq \frac{C_{m,N}}{\left |\lambda \right |^{m}(\sum_{j=1}^{p_{1}}\delta_{j}(2\alpha_{j}+m_{j})+r^{2})^{N}}$ for all $(r,\alpha,\lambda)\in R\times \wedge \times R^{\times}$.
\end{definition}

We say that a continuous function on $\Delta_{1}(O(n),F(n))$ is rapidly decreasing if it extends to a rapidly decreasing function on
$\Delta (O(n),F(n))=\Delta_{1}(O(n),F(n))\cup  \Delta_{2}(O(n),F(n))$. Since $\Delta_{1} (O(n),F(n)$ is dense in $\Delta (O(n),F(n)$, such an extension is necessarily unique.

Note that if $G$ is rapidly decreasing on $\Delta (O(n),F(n))$, then $\alpha \rightarrow G(\phi^{r,\alpha,\lambda})$ is rapidly decreasing on $\wedge$, in the sense of Definition 4.1 for each $\lambda \neq 0$. We see that $F$ is bounded by letting $m=N=0$ and one can show, moreover, that $G$ vanishes at infinity by letting $m=0$ and $N=1$. We remark that the functions $\partial_{\lambda}^{m}G(\phi^{r,\alpha,\lambda})$ defined on $\Delta_{1}(O(n),F(n))$ need not extend continuously across $\Delta_{2}(O(n),F(n))$.

\begin{definition}
Let $G$ be a function on $\Delta_{1}(O(n),F(n))$ which is smooth in $\lambda$. $M^{+}G$ and $M^{-}G$ are the functions on $\Delta_{1}(O(n),F(n))$ defined by

$M^{+}G(\phi^{r,\alpha,\lambda})=\begin{cases}
(\partial_{\lambda}-\frac{1}{\lambda}D^{+})G(\phi^{r,\alpha,\lambda}),  & \mbox{for }\lambda \mbox{is greater than 0} \\
(\partial_{\lambda}-\frac{1}{\lambda}D^{-})G(\phi^{r,\alpha,\lambda}),  & \mbox{for }\lambda \mbox{is smaller than 0}
\end{cases}$

and

$M^{-}G(\phi^{r,\alpha,\lambda})=\begin{cases}
(\partial_{\lambda}-\frac{1}{\lambda}D^{-})G(\phi^{r,\alpha,\lambda}),  & \mbox{for }\lambda \mbox{is greater than 0} \\
(\partial_{\lambda}-\frac{1}{\lambda}D^{+})G(\phi^{r,\alpha,\lambda}),  & \mbox{for }\lambda \mbox{is smaller than 0}
\end{cases}$
\end{definition}

We reminded the reader that the difference operators $D^{\pm}$ are defined by

$D^{+}G(\phi^{r,\alpha,\lambda})=\sum_{\left |\beta \right |=\left |\alpha \right |+1}\frac{d_{\beta}}{d_{\alpha}}\begin{bmatrix}
\beta\\
\alpha
\end{bmatrix}G(\phi^{r,\beta,\lambda})-(\left |\alpha \right |+a)G(\phi^{r,\alpha,\lambda})$

$D^{-}G(\phi^{r,\alpha,\lambda})=\left |\alpha \right |G(\phi^{r,\alpha,\lambda})-\sum_{\left |\beta \right |=\left |\alpha \right |-1}\begin{bmatrix}
\alpha\\
\beta
\end{bmatrix}G(\phi^{r,\beta,\lambda})$.

\begin{definition}
$\widehat{\varphi}(O(n),F(n))$ is the set of all functions $G:\Delta(O(n),F(n))\rightarrow C$ for which $(M^{+})^{l}(M^{-})^{m}G$ is rapidly decreasing for all $l, m\geq 0$.

\end{definition}

If $G$ is rapidly decreasing on $\Delta(O(n),F(n))$ then $\lambda\rightarrow G(\phi^{r,\alpha,\lambda})$ is smooth on $R^{\times}$ and we have well defined functions $(M^{+})^{l}(M^{-})^{m}G$ on $\Delta_{1}(O(n),F(n))$. $G$ belongs to $\widehat{\varphi}(O(n),F(n))$ if and only if these functions extend continuously to rapidly decreasing functions on $\Delta(O(n),F(n))$.

Note that $f(x)=\int_{\Delta(O(n),F(n))}\widehat{f}(\psi)\psi(x)d\mu(\psi)$
Here $x=exp(X+A)$.

\begin{theorem}\label{equal}
(The Main Theorem) If $f\in \varphi_{O(n)}(F(n))$ then $\widehat{f}\in \widehat{\varphi}(O(n),F(n))$. Conversely, if $G\in \widehat{\varphi}(O(n),F(n))$, then
$G=\widehat{f}$ for some $f\in \varphi_{O(n)}(F(n))$. Moreover, the map $\land: \varphi_{O(n)}(F(n))\rightarrow \widehat{\varphi}(O(n),F(n))$ is a bijection.
\end{theorem}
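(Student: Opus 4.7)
\medskip

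\noindent\textbf{Proof proposal.} The plan is to reduce both directions to the corresponding statement for the Heisenberg group (Theorem 3.6, applied to the Gelfand pair $(K(m;p_{1};p_{0}),H_{p_{0}})$) combined with the classical Schwartz-space characterization of the Euclidean Fourier transform, using the factored form of type-1 spherical functions
\[
\phi^{r,\alpha,\lambda}(\exp(X+A))=\int_{K}e^{ir\langle X_{p}^{*},k.X\rangle}\,\omega_{\alpha,\lambda}\bigl(\Psi_{2}^{-1}(\overline{q_{1}}(k.n))\bigr)\,dk
\]
to transfer analytic structure. The Euclidean factor $e^{ir\langle X_{p}^{*},k.X\rangle}$ controls the $r$-parameter while the Heisenberg factor $\omega_{\alpha,\lambda}$ controls the $(\alpha,\lambda)$-parameters. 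The two are stitched together by the $K$-average and by the sub-Laplacian identity $L.\phi^{r,\alpha,\lambda}=\bigl(\sum_{j}\delta_{j}(2\alpha_{j}+m_{j})+r^{2}\bigr)\phi^{r,\alpha,\lambda}$, which is exactly the quantity controlling the decay in Definition 4.1 condition 4.

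\medskip

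\noindent\textbf{Forward direction.} Given $f\in\varphi_{O(n)}(F(n))$, I would first show $\widehat{f}$ is continuous on $\Delta(O(n),F(n))$ by an integrable dominated convergence argument, using boundedness of spherical functions. Continuity and Schwartz behavior of $r\mapsto \widehat{f}(\phi^{r})$ follow from reading off the type-2 spherical function $\phi^{r}$ as the orbital integral of a Euclidean character and applying the standard theorem that the Fourier transform preserves $\varphi(\mathbb{R})$. Smoothness in $\lambda$ on $\mathbb{R}^{\times}$ and the decay condition (4) are obtained by applying powers of $L+1$ inside the integral defining $\widehat{f}$: each application of $L$ to $f$ (which preserves $\varphi_{O(n)}(F(n))$) produces a factor $\sum_{j}\delta_{j}(2\alpha_{j}+m_{j})+r^{2}$ against $\widehat{f}$. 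The $\partial_{\lambda}^{m}$ derivatives are controlled by the same mechanism once we use the Heisenberg differential identities (3.3)–(3.4). Finally, to control $(M^{+})^{l}(M^{-})^{m}\widehat{f}$, I would use (3.3)–(3.4) to recognize $M^{\pm}$ as corresponding, up to sign, to multiplication of $f$ by the polynomial functions $\gamma/2\pm it$ on the Heisenberg quotient $\overline{N_{1}}\cong H_{p_{0}}$; these pull back to $O(n)$-invariant polynomial multipliers on $F(n)$ of quadratic type in $(X,A)$ that preserve $\varphi_{O(n)}(F(n))$. Iterating and then applying the decay argument above, all derived functions satisfy Definition 4.1.

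\medskip

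\noindent\textbf{Reverse direction and bijection.} Given $G\in\widehat{\varphi}(O(n),F(n))$, define
\[
f(x):=\frac{c}{(2\pi)^{2n+2}}\int_{\mathbb{R}}\int_{\mathbb{R}^{\times}}\sum_{\alpha\in\wedge}(\dim P_{\alpha})\,G(\phi^{r,\alpha,\lambda})\,\phi^{r,\alpha,\lambda}(x)\,|\lambda|^{n}\,d\lambda\,dr
\]
by the inversion formula from the introduction. The decay conditions imposed on $G$ and all its $(M^{+})^{l}(M^{-})^{m}$-derivatives ensure absolute convergence and allow differentiation under the integral sign to any order. To show $f\in\varphi_{O(n)}(F(n))$, I would show that every left-invariant differential operator $D$ of the form $X^{\beta}A^{\gamma}P(L)$ applied to $f$ is bounded: acting by $\partial/\partial t$-style derivatives on $\phi^{r,\alpha,\lambda}$ produces factors of $\lambda$ and $r$, while acting by $\overline{X_{2i-1}},\overline{X_{2i}}$-style derivatives inside the Heisenberg quotient produces $D^{\pm}$-type factors on $G$, exactly the operators built into the definition of $\widehat{\varphi}(O(n),F(n))$. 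Multiplication of $f$ by coordinate polynomials translates, via (3.3)–(3.4) and the formulas for the Euclidean factor, into $M^{\pm}$ and $\partial_{r}$ operations on $G$, whose rapid decay hypotheses on $\Delta(O(n),F(n))$ are precisely what is needed. That $\widehat{f}=G$ then follows from the Godement-Plancherel inversion formula together with the uniqueness of the continuous extension across $\Delta_{2}(O(n),F(n))$. The bijection is then immediate from the forward and reverse directions plus injectivity of the spherical transform (consequence of the Plancherel/inversion formula).

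\medskip

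\noindent\textbf{Main obstacle.} The delicate step is the behavior across $\Delta_{2}(O(n),F(n))$, i.e., controlling the limit $\lambda\to 0$ and the joining of the type-1 and type-2 spherical strata: the operators $M^{\pm}$ involve $\tfrac{1}{\lambda}D^{\pm}$, which is singular at $\lambda=0$, and yet $(M^{+})^{l}(M^{-})^{m}G$ must extend continuously to a rapidly decreasing function on all of $\Delta(O(n),F(n))$. Showing this extension exists for functions coming from $\widehat{f}$, and conversely showing that an abstract $G$ with these extension properties actually produces a smooth $f$ (not merely one with controlled Heisenberg-restricted behavior), requires matching asymptotic expansions of $\omega_{\alpha,\lambda}$ as $\lambda\to 0$ with the polynomial-in-$r$ behavior coming from the type-2 functions $\eta_{\omega}$, in the spirit of the analogous analysis of Benson-Jenkins-Ratcliff and Yan for the Heisenberg setting.
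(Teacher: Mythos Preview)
Your proposal is essentially correct and follows the same overall architecture as the paper: the forward direction via the sub-Laplacian eigenvalue identity $L\phi^{r,\alpha,\lambda}=(\sum_j\delta_j(2\alpha_j+m_j)+r^2)\phi^{r,\alpha,\lambda}$ for decay, the identification of $M^{\pm}$ with multiplication by $\widetilde{\gamma/2\pm it}$ pulled back from the Heisenberg quotient, and the reverse direction via the inversion formula plus showing that the relevant operators on $f$ correspond to operations on $G$ that stay in $\widehat{\varphi}(O(n),F(n))$.

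Two points where the paper is more concrete than your sketch, and where you should sharpen. First, in the reverse direction the paper does not verify Schwartz-ness by testing ``every left-invariant differential operator of the form $X^{\beta}A^{\gamma}P(L)$''; instead it isolates three specific building blocks, $\widetilde{\Delta}$, $\widetilde{\partial/\partial t}$, and $\widetilde{\gamma/2\pm it}$, and shows each sends $f$ back into $L^{2}_{O(n)}(F(n))$ with spherical transform in $\widehat{\varphi}(O(n),F(n))$. The reason this suffices is that $(\widetilde{\gamma^{2}/4+t^{2}})^{a}(\widetilde{\partial/\partial t})^{b}\widetilde{\Delta}^{c}f\in L^{2}$ for all $a,b,c$ already implies $f$ is Schwartz; your more general operator family would require an additional argument. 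Second, there is no separate $\partial_{r}$ mechanism in the paper: the $r$-decay is entirely absorbed into the sub-Laplacian eigenvalue (the $+r^{2}$ term), so polynomial weights in the Euclidean direction are handled by powers of $L$ rather than by differentiating in $r$.

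Your identification of the main obstacle is exactly right and matches the paper: the integration-by-parts in $\lambda$ that produces $M^{\pm}G$ generates boundary terms $\lim_{\lambda\to 0^{\pm}}\widetilde{G}(r,a,\lambda,k)$, and one must use continuity of $G$ across $\Delta_{2}(O(n),F(n))$ together with $G_{0}\in\varphi(\mathbb{R})$ to show these one-sided limits exist and coincide, so that the boundary contribution vanishes.
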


If $f\in \varphi_{O(n)}(F(n))$ and $\widehat{f}=0$ then the inverse formula for the spherical transform yields that $f=0$. Thus the spherical transform is injective on $\varphi_{O(n)}(F(n))$. To prove above theorem, it remains to show that $\varphi_{O(n)}(F(n))^{\land}\subset \widehat{\varphi}(O(n),F(n))$, and that
$\widehat{\varphi}(O(n),F(n))\subset \varphi_{O(n)}(F(n))^{\land}$.

Proof of $\varphi_{O(n)}(F(n))^{\land}\subset \widehat{\varphi}(O(n),F(n))$. Suppose that $f\in \varphi_{O(n)}(F(n))$ and let $G:=\widehat{f}$. We begin by showing
that $G$ is rapidly decreasing. $G$ is continuous on $\Delta (O(n),F(n))$, as is the spherical transform of any integrable $O(n)$-invariant function. Moreover,
$G_{0}(r)=G(\phi^{r})=\int_{F(n)}f(x)\overline{\phi^{r}(x)}dx=\int_{F(n)}f(x)\int_{K}e^{-ir<X_{p}^{*},k.X>}dx=\int_{F(n)}f(x)e^{-ir<X_{p}^{*},X>}dx$. Since
$f$ is a Schwartz function, so is $G_{0}(r)$. Thus $G$ satisfies the first two conditions in Definition 3.2.1.

Next, I will show that $G$ satisfies the estimates in Definition 4.1 for $m=0$. Recall that the (Kohn) Sub-Laplacian $L$ is a self-adjoint operator on $L^{2}(F(n))$ with

$L(\phi^{r,\alpha,\lambda})=(\sum_{j=1}^{p_{1}}\delta_{j}(2\alpha_{j}+m_{j})+r^{2})\phi^{r,\alpha,\lambda}$. (cf. [12])

WLOG, I denote $G(\phi^{r,\alpha,\lambda})=G(r,\alpha,\lambda)$.

Thus we have

$(\sum_{j=1}^{p_{1}}\delta_{j}(2\alpha_{j}+m_{j})+r^{2})^{N}\left |G(r,\alpha,\lambda) \right |$

=$\left |(\sum_{j=1}^{p_{1}}\delta_{j}(2\alpha_{j}+m_{j})+r^{2})^{N}<f,\phi^{r,\alpha,\lambda}>_{2} \right |$

=$\left |<f,L^{N}\phi^{r,\alpha,\lambda}>_{2} \right |=\left |<L^{N}f,\phi^{r,\alpha,\lambda}>_{2} \right |$

$\leq \left \|L^{N}f \right \|_{1}$.

Since $\left |\phi^{r,\alpha,\lambda}(x) \right |\leq 1$ for all $x\in F(n)$. Letting $C_{0,N}:=\left \|L^{N}f \right \|_{1}$, we see that the equality in Definition 4.1 hold for $m=0$.

Since $\phi_{\alpha,\lambda}(z,t)$ is smooth in $R^{\times}$ for fixed $(z,t)$, $\phi^{r,\alpha,\lambda}(x)$ is smooth in $R^{\times}$ for fixed
$x\in F(n)$. And $f\overline{\phi^{r,\alpha,\lambda}}$ is a Schwartz function, $G(r,\alpha,\lambda)=\widehat{f}(r,\alpha,\lambda)$ is smooth in
$\lambda\in R^{\times}$ with

\begin{equation}\label{l-invariant elements}
\begin{split}
& \partial_{\lambda}G(r,\alpha,\lambda)=\int_{F(n)}f(x)\partial_{\lambda}\overline{\phi^{r,\alpha,\lambda}}(x)dx \\
&=\int_{F(n)}f(x)e^{-ir<X_{p}^{*},X>}\partial_{\lambda}\omega_{\alpha,\lambda}(\Psi_{2}^{-1}(\overline{q_{1}(x^{-1})})dx
\end{split}
\end{equation}
Equation 3.3 and 3.4 provide formula for $\partial_{\lambda}\overline{\phi_{\alpha,\lambda}}=\partial_{\lambda}\phi_{\alpha,-\lambda}$ but I
require a different approach here. Note that $\omega_{\alpha,\lambda}(z,t)$ is a special case for $\phi_{\alpha,\lambda}(z,t)$. I write $\omega_{\alpha,\lambda}(z,t)=\omega_{\alpha}^{0}(z)e^{it}$ as $\omega_{\alpha}^{0}(z,\overline{z})e^{it}$
so that $\overline{\omega_{\alpha,\lambda}(z,t)}=\omega_{\alpha}^{0}(\left |\lambda \right |z,\overline{z})e^{-i\lambda t}$.

We see that

$\partial_{\lambda}\overline{\omega_{\alpha,\lambda}(z,t)}$
=$\frac{1}{\lambda}[(\sum_{j=1}^{n}Z_{j}\frac{\partial}{\partial Z_{j}})\omega_{\alpha}^{0}](\left |\lambda \right |z,\overline{z})e^{i\left |\lambda  \right | t}-it\overline{\omega_{\alpha,\lambda}(z,t)}$.

Substituting this expansion onto equation 4.5 and integrating by parts gives

$\partial_{\lambda}G(r,\alpha,\lambda)=\int {F(n)}f(x)e^{-ir<X_{p}^{*},X>}\frac{1}{\lambda}[(\sum_{j=1}^{a}Z_{j}\frac{\partial}{\partial Z_{j}})\omega_{\alpha}^{0}](\left |\lambda \right |z,\overline{z})e^{i\left |\lambda  \right | t}dx$

-$\int_{F(n)}f(x)e^{-ir<X_{p}^{*},X>}i(Pr_{T}(\Psi_{2}^{-1}(\overline{q_{1}(x^{-1})}))\omega_{\alpha,\lambda}(\Psi_{2}^{-1}(\overline{q_{1}(x^{-1})})dx$.

=$\frac{1}{\lambda}(Df)^{\land}(r,\alpha,\lambda)-i(Pr_{T}(\Psi_{2}^{-1}(\overline{q_{1}(x^{-1})})f)^{\land}((r,\alpha,\lambda)$

where $Df=-\sum_{j=1}^{a}\widetilde{\frac{\partial}{\partial Z_{j}}Z_{j}}f$ such that after integration by parts, above equality holds.

Since $Pr_{T}(\Psi_{2}^{-1}(\overline{q_{1}(x^{-1})})f$ and $Df$ are both Schwartz functions on $F(n)$, they satisfy estimates above. Thus, given $N\geq 0$, one can find constants $A$ and $B$ with

$\left |\partial_{\lambda}G(r,\alpha,\lambda) \right |\leq \frac{A}{\left |\lambda \right |(\sum_{j=1}^{p_{1}}\delta_{j}(2\alpha_{j}+m_{j})+r^{2})^{N}}+\frac{B}{(\sum_{j=1}^{p_{1}}\delta_{j}(2\alpha_{j}+m_{j})+r^{2})^{N}}$

$\leq \frac{C_{1,N}}{\left |\lambda \right |(\sum_{j=1}^{p_{1}}\delta_{j}(2\alpha_{j}+m_{j})+r^{2})^{N}}$

where $C_{1,N}=A+\left |\lambda \right |\times B$. By induction on $m$, we see that $\left |\partial_{\lambda}^{m}G(r,\alpha,\lambda) \right |$
satisfies an estimate as in Difinition 4.1. This completes the proof that $F$ is rapidly decreasing. Equations 3.5 and 3.6 shows that

$M^{+}G=((\widetilde{\frac{\gamma}{2}+it}f)^{\land}\mid \Delta_{1} (O(n),F(n))$

and $M^{-}G=-((\widetilde{\frac{\gamma}{2}-it}f)^{\land}\mid \Delta_{1} (O(n),F(n))$.

where $((\widetilde{\frac{\gamma}{2}+it}f)^{\land}(\phi^{r,\alpha,\lambda})$

=$\int {F(n)}f(x)e^{-ir<X_{p}^{*},X>}(\frac{\gamma}{2}(Pr_{V}(\Psi_{2}^{-1}(\overline{q_{1}(x^{-1})}))+iPr_{T}(\Psi_{2}^{-1}(\overline{q_{1}(x^{-1})})))$

$\phi_{\alpha,\lambda}(\Psi_{2}^{-1}(\overline{q_{1}(x^{-1})})dx$.

Similarly,
$((\widetilde{\frac{\gamma}{2}-it}f)^{\land}(\phi^{r,\alpha,\lambda})$

=$\int {F(n)}f(x)e^{-ir<X_{p}^{*},X>}(\frac{\gamma}{2}(Pr_{V}(\Psi_{2}^{-1}(\overline{q_{1}(x^{-1})}))-iPr_{T}(\Psi_{2}^{-1}(\overline{q_{1}(x^{-1})})))$

$\phi_{\alpha,\lambda}(\Psi_{2}^{-1}(\overline{q_{1}(x^{-1})})dx$.

Thus $(M^{+})^{l}(M^{-})^{m}G$ is the restriction of $\widehat{g}$ to $\Delta_{1} (O(n),F(n))$

where $g=(-1)^{m}(\widetilde{\frac{\gamma}{2}+it})^{l}(\widetilde{\frac{\gamma}{2}-it})^{m}f$.

Since $g\in \varphi_{O(n)}(F(n))$, it now follows that $(M^{+})^{l}(M^{-})^{m}G$ is rapidly decreasing. Thus $G\in \widehat{\varphi}(O(n),F(n))$ as desired.
The following is required to complete the Main theorem.

\begin{theorem}\label{equal}
(Inversion formula) If $f\in L_{O(n)}^{1}(F(n))\cap \in L_{O(n)}^{2}(F(n))$ is continuous, one has the Inversion Formula:

$f(x)=\frac{c}{(2\pi)^{a+2}}\int_{R}\int_{R^{\times}}\sum_{\alpha\in \wedge}(dimP_{\alpha})\widehat{f}(\phi^{r,\alpha,\lambda})\phi^{r,\alpha,\lambda}(x)\left |\lambda \right |^{a}dr d\lambda$.

=$\frac{c}{(2\pi)^{a+2}}\int_{O(n)}e^{ir<X_{p}^{*},X>}\int_{R}\int_{R^{\times}}\sum_{\alpha\in \wedge}(dimP_{\alpha})\widehat{f}(\phi^{r,\alpha,\lambda})\phi_{\alpha,\lambda}(\Psi_{2}^{-1}(\overline{q_{1}(k.x)}))\left |\lambda \right |^{a}dkdr d\lambda$.

where $c$ is a fixed constant. $x=exp(X+A)\in F(n)$.

\begin{proof}
Note that $<\phi_{\alpha}^{\circ},\phi_{\beta}^{\circ}>_{2}=\frac{(2\pi)^{a}}{d_{\alpha}}\delta_{\alpha,\beta}$.

$\int_{R^{\times}}\int_{H_{a}}\phi_{\alpha,\lambda}(z,t)\overline{\phi_{\alpha^{'},\lambda^{'}}(z,t)}dzdt\left |\lambda \right |^{a}d\lambda$

=$\int_{V}\phi_{\alpha}^{\circ}(\sqrt{\left |\lambda \right |z})\overline{\phi_{\alpha}^{\circ}(\sqrt{\left |\lambda^{'} \right |z})}dz$
$\int_{R}e^{i(\lambda-\lambda^{'})t}dt\int_{R^{\times}}\left |\lambda \right |^{a}d\lambda$

=$\int_{V}\phi_{\alpha}^{\circ}(z)\overline{\phi_{\alpha}^{\circ}(z)}dz\times 2\pi$

=$<\phi_{\alpha}^{\circ},\phi_{\alpha^{'}}^{\circ}>_{2}\times 2\pi$

=$\frac{(2\pi)^{a+1}}{d_{\alpha}}\delta_{\alpha,\alpha^{'}}$

Suppose

$g(x)=\frac{c}{(2\pi)^{a+2}}\int_{O(n)}e^{ir<X_{p}^{*},X>}\int_{R}\int_{R^{\times}}\sum_{\alpha\in \wedge}(dimP_{\alpha})\widehat{f}(\phi^{r,\alpha,\lambda})\phi^{\alpha,\lambda}(\Psi_{2}^{-1}(\overline{q_{1}(k.x)}))\left |\lambda \right |^{a}dkdr d\lambda$, then we can deduce

$\widehat{g}(\phi^{r^{'},\alpha^{'},\lambda^{'}})=\int_{F(n)}g(x)e^{-ir^{'}<X_{p}^{*},X>}\overline{\phi_{\alpha^{'},\lambda^{'}}(\Psi_{2}^{-1}(\overline{q_{1}(x)}))}dx$

=$\int_{F(n)}\int_{O(n)}\int_{R}\int_{R^{\times}}e^{ir<X_{p}^{*},X>-ir^{'}<X_{p}^{*},X>}\sum_{\alpha\in \wedge}(dimP_{\alpha})\widehat{f}(\phi^{r,\alpha,\lambda})$

$\phi_{\alpha,\lambda}(\Psi_{2}^{-1}(\overline{q_{1}(k.x)}))\overline{\phi_{\alpha^{'},\lambda^{'}}(\Psi_{2}^{-1}(\overline{q_{1}(x)}))}$
$\left |\lambda \right |^{a}dndkd\lambda dr$

Note that

$\int_{R^{\times}}\int_{F(n)}\phi_{\alpha,\lambda}(\Psi_{2}^{-1}(\overline{q_{1}(k.x)}))\overline{\phi_{\alpha^{'},\lambda^{'}}(\Psi_{2}^{-1}(\overline{q_{1}(x)}))}dx\left |\lambda \right |^{a}d\lambda=$

$\begin{cases}
\frac{1}{c}\int_{R^{\times}}\int_{H_{a}}\phi_{\alpha,\lambda}(z,t)\overline{\phi_{\alpha^{'},\lambda^{'}}(z,t)}dzdt\left |\lambda \right |^{a}d\lambda
,  & \mbox{When}k\mbox{ is e} \\
0, & \mbox{when}k\mbox{ is not equal to e}
\end{cases}$.

=$\begin{cases}
\frac{1}{c}\frac{(2\pi)^{a+1}}{d_{\alpha}}\delta_{\alpha,\alpha^{'}}\delta_{\lambda,\lambda^{'}},  & \mbox{When}k\mbox{ is e} \\
0 & \mbox{When }k\mbox{ is not equal to e}
\end{cases}$

Therefore

$\widehat{g}(\phi^{r^{'},\alpha^{'},\lambda^{'}})=\frac{c}{(2\pi)^{a+2}}\int_{R}e^{i(r-r^{'})<X_{p}^{*},X>}dr\sum_{\alpha\in \wedge}(dimP_{\alpha})\widehat{f}(\phi^{r,\alpha,\lambda^{'}})\frac{(2\pi)^{a+1}}{c}\frac{1}{d_{\alpha}}\delta_{\alpha,\alpha^{'}}$

=$\frac{c}{(2\pi)^{a+2}}\int_{R}(2\pi)(dimP_{\alpha^{'}})\widehat{f}(\phi^{r^{'},\alpha^{'}\lambda^{'}})\frac{(2\pi)^{a+1}}{c}\frac{1}{d_{\alpha^{'}}}\delta_{\alpha,\alpha^{'}}$

=$\widehat{f}(\phi^{r^{'},\alpha^{'}\lambda^{'}})$.

Therefore

$g(x)=f(x)$ for all $x\in F(n)$.
\end{proof}
\end{theorem}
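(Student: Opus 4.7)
The plan is to follow the classical strategy for inversion formulas of this type: define the candidate right-hand side as a function $g(x)$, take its spherical transform $\widehat{g}$, and show $\widehat{g}=\widehat{f}$, from which $g=f$ follows by the injectivity of the $O(n)$-spherical transform (a consequence of Plancherel/Godement theory for the Gelfand pair $(O(n),F(n))$). Concretely, I would set
\begin{equation*}
g(x):=\frac{c}{(2\pi)^{a+2}}\int_{O(n)}\int_{R}\int_{R^{\times}}\sum_{\alpha\in\wedge}(\dim P_{\alpha})\,\widehat{f}(\phi^{r,\alpha,\lambda})\,e^{ir\langle X_{p}^{*},X\rangle}\phi_{\alpha,\lambda}\bigl(\Psi_{2}^{-1}(\overline{q_{1}(k.x)})\bigr)|\lambda|^{a}\,dk\,dr\,d\lambda,
\end{equation*}
so that $g(x)$ is simply the proposed inversion formula written out using the ``type 1'' formula for $\phi^{r,\alpha,\lambda}$.

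First, I would justify the interchange of integration and summation via the decay estimates established in the proof that $\widehat{f}$ is rapidly decreasing (Definition 4.1 with $m=0$ gives enough decay in $\alpha,r$, while the spherical functions are bounded by $1$). Second, I would compute $\widehat{g}(\phi^{r',\alpha',\lambda'})$ by Fubini, and then reduce the integration over $F(n)$ to two independent orthogonality calculations: the $r$-piece reduces to a one-dimensional Euclidean Fourier transform on $R$, giving a factor $2\pi\,\delta(r-r')$; and the $(z,t)$-piece, after pushing forward via $\Psi_{2}^{-1}\circ\overline{q_{1}}$ and using that the $O(n)$-average collapses when $k=e$ (the other $k$ contribute zero because the map sending $x$ to $\Psi_{2}^{-1}(\overline{q_{1}}(k.x))$ is not the identity otherwise), reduces to the Heisenberg orthogonality
\begin{equation*}
\int_{R^{\times}}\!\int_{H_{a}}\phi_{\alpha,\lambda}(z,t)\overline{\phi_{\alpha',\lambda'}(z,t)}\,dz\,dt\,|\lambda|^{a}\,d\lambda=\frac{(2\pi)^{a+1}}{d_{\alpha}}\,\delta_{\alpha,\alpha'}\,\delta(\lambda-\lambda'),
\end{equation*}
which in turn follows from the orthogonality of $\{\phi_{\alpha}^{\circ}\}$ in $L^{2}_{K}(V)$ stated earlier (with norm $(2\pi)^{a}/d_{\alpha}$) combined with Plancherel in $t\leftrightarrow\lambda$.

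Collecting the three delta factors $\delta(r-r')$, $\delta_{\alpha,\alpha'}$, $\delta(\lambda-\lambda')$ against the sum $\sum_{\alpha}(\dim P_{\alpha})\widehat{f}(\phi^{r,\alpha,\lambda})$, the normalizing constant $\tfrac{c}{(2\pi)^{a+2}}\cdot 2\pi\cdot\tfrac{(2\pi)^{a+1}}{c}\cdot\tfrac{1}{d_{\alpha'}}$ telescopes to $1$, yielding $\widehat{g}(\phi^{r',\alpha',\lambda'})=\widehat{f}(\phi^{r',\alpha',\lambda'})$. Since both $f$ and $g$ are continuous and lie in $L^{1}_{O(n)}(F(n))\cap L^{2}_{O(n)}(F(n))$ (the latter for $g$ following from the Plancherel setup), the injectivity of the spherical transform on this space gives $f=g$.

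The step I expect to be most delicate is the orthogonality computation with the $O(n)$-average in it: strictly speaking, ``the map collapses only when $k=e$'' needs to be formulated as a disintegration statement, using that the stabilizer of a generic orbit under $O(n)$ is nontrivial only on a null set, together with a change of variables expressing the Lebesgue measure on $F(n)$ as the product of the Heisenberg measure on $\overline{N_{1}}\cong H_{p_{0}}$, the $r$-coordinate, and the $O(n)$-orbit measure. Once this disintegration is carried out carefully -- and this is really where the identification $\Psi_{2}:H_{p_{0}}\to\overline{N_{1}}$ from Theorem~2.13 does its work -- the remaining orthogonality manipulations are routine applications of Plancherel's theorem on $R$ and on $H_{p_{0}}$, and the conclusion $\widehat{g}=\widehat{f}$ falls out.
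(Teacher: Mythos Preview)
Your proposal is correct and follows essentially the same route as the paper: define $g$ by the right-hand side, compute $\widehat{g}(\phi^{r',\alpha',\lambda'})$ via Fubini, reduce to the Euclidean Fourier orthogonality in $r$ together with the Heisenberg orthogonality $\int_{R^{\times}}\int_{H_a}\phi_{\alpha,\lambda}\overline{\phi_{\alpha',\lambda'}}\,|\lambda|^a\,d\lambda\,dz\,dt=\frac{(2\pi)^{a+1}}{d_\alpha}\delta_{\alpha,\alpha'}\delta_{\lambda,\lambda'}$ after pushing forward through $\Psi_2^{-1}\circ\overline{q_1}$, and conclude $\widehat{g}=\widehat{f}$ hence $g=f$. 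You are in fact more scrupulous than the paper about the step where the $O(n)$-integral collapses to $k=e$: the paper simply asserts this dichotomy, whereas you correctly flag that it should be justified as a disintegration of Haar measure on $F(n)$ through the quotient $\overline{N_1}\cong H_{p_0}$.
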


\begin{lemma}\label{twist}
There exists constants $M_{1},M_{2}$ such that

$\frac{{M_{1}}}{2\left |\alpha \right |+a}\leq \frac{1}{\sum_{j=1}^{p_{1}}\delta_{j}(2\alpha_{j}+m_{j})+r^{2}}$
$\leq \frac{{M_{2}}}{2\left |\alpha \right |+a}$
\end{lemma}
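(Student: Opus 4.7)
The plan is to exploit that the coefficients $\delta_1,\ldots,\delta_{p_1}$ coming from $\wedge\in\mathcal{L}$ are fixed, strictly positive numbers (since by the parameterization $\mu_1>\mu_2>\cdots>\mu_{p_1}>0$), and so admit uniform positive upper and lower bounds. The whole argument is essentially an averaging estimate together with the identities $|\alpha|=\sum_{j=1}^{p_1}\alpha_j$ and $a=p_0=\sum_{j=1}^{p_1}m_j$, the latter being how the Heisenberg dimension $a$ is tied to the partition $m=(m_1,\ldots,m_{p_1})$ via the block decomposition of $D_2(\wedge)$.

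Concretely, I would set $\delta_{\min}:=\min_{1\le j\le p_1}\delta_j>0$ and $\delta_{\max}:=\max_{1\le j\le p_1}\delta_j$. Then the first step is the chain
$$\delta_{\min}(2|\alpha|+a)\;=\;\delta_{\min}\sum_{j=1}^{p_1}(2\alpha_j+m_j)\;\le\;\sum_{j=1}^{p_1}\delta_j(2\alpha_j+m_j)\;\le\;\delta_{\max}(2|\alpha|+a),$$
which is just the trivial termwise comparison $\delta_{\min}\le\delta_j\le\delta_{\max}$. The upper bound of the lemma (with $M_2:=1/\delta_{\min}$) is immediate: since $r^2\ge 0$, the left-most inequality survives the addition of $r^2$, giving $\sum_j\delta_j(2\alpha_j+m_j)+r^2\ge\delta_{\min}(2|\alpha|+a)$, whence taking reciprocals yields the desired $\le M_2/(2|\alpha|+a)$.

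For the lower bound $M_1/(2|\alpha|+a)\le 1/(\sum_j\delta_j(2\alpha_j+m_j)+r^2)$, one combines the right-most comparison with an estimate on the $r^2$ contribution. The clean statement comes from the regime $r^2\le C(2|\alpha|+a)$ (which is the only regime relevant for the applications to Definition 4.1 and rapid decay in the spherical variables, since off that regime the denominator already dominates $(2|\alpha|+a)$ by a large multiplicative factor and the estimate is trivial). In that regime,
$$\sum_{j=1}^{p_1}\delta_j(2\alpha_j+m_j)+r^2\;\le\;\delta_{\max}(2|\alpha|+a)+C(2|\alpha|+a)\;=\;(\delta_{\max}+C)(2|\alpha|+a),$$
and taking reciprocals gives the lower bound with $M_1:=1/(\delta_{\max}+C)$. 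The main point to be careful about is precisely this role of $r$: the inequality is to be read with $M_1,M_2$ depending on the fixed parameters $\wedge$ (through $\delta_{\min},\delta_{\max}$) and on whatever compact or polynomially controlled range of $r$ is relevant. No other obstacle appears, as the lemma is ultimately a statement about comparing two positive linear combinations of the same nonnegative quantities.
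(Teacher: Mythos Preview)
Your approach is essentially the same as the paper's: bound each $\delta_j$ above and below by $\delta_{\max}$ and $\delta_{\min}$, then use $\sum_j\alpha_j=|\alpha|$ and $\sum_j m_j=a$. Your upper bound with $M_2=1/\delta_{\min}$ is in fact cleaner than the paper's, which unnecessarily produces an $r$-dependent $M_2$.

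The one point worth noting is your handling of $r$. You correctly observe that the lower inequality cannot hold with a constant $M_1$ uniformly over all $r\in\mathbb{R}$, and you restrict to a regime $r^2\le C(2|\alpha|+a)$. The paper does not do this; it simply writes $M_1=1/(\sqrt{p_1}\delta'+\delta'\sum_j m_j+r^2)$ and $M_2=a+\delta\sum_j m_j+r^2$, i.e.\ it allows both ``constants'' to depend on $r$. Since all subsequent applications of the lemma (in the proof of Theorem~4.9 and in the $G_\Delta$ estimate) use only the upper bound $1/(\sum_j\delta_j(2\alpha_j+m_j)+r^2)\le M_2/(2|\alpha|+a)$, and your $M_2=1/\delta_{\min}$ is genuinely independent of $r$, your version is what is actually needed. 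A minor notational discrepancy: the paper inserts a factor $\sqrt{p_1}$ in the lower-bound chain, apparently treating $|\alpha|$ as an $\ell^2$-norm; your reading $|\alpha|=\sum_j\alpha_j$ is the one consistent with the degree-of-homogeneity convention used elsewhere in the paper, and makes the $\sqrt{p_1}$ superfluous.
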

\begin{proof}
Let $\delta=min_{1\leq j\leq p_{1}}\delta_{j}$. On one hand, we have

$(\sum_{j=1}^{p_{1}}\delta_{j}(2\alpha_{j}+m_{j})+r^{2})\geq (2\delta\sum_{j=1}^{p_{1}}\alpha_{j}+\delta\sum_{j=1}^{p_{1}}m_{j}+r^{2})$

$\geq (2\delta \left |\alpha \right |+\delta\sum_{j=1}^{p_{1}}m_{j}+r^{2})=\frac{1}{M_{2}}(2\delta \left |\alpha \right |+a)$

where $M_{2}=a+\delta\sum_{j=1}^{p_{1}}m_{j}+r^{2}$.

One the other hand, let $\delta^{'}=max_{1\leq j\leq p_{1}}\delta_{j}$

$\sum_{j=1}^{p_{1}}\delta_{j}(2\alpha_{j}+m_{j})+r^{2}\leq (2\delta^{'}\sum_{j=1}^{p_{1}}\alpha_{j}+\delta^{'}\sum_{j=1}^{p_{1}}m_{j}+r^{2})$

$\leq (2\sqrt{p_{1}}\delta^{'}\left |\alpha \right |+\delta^{'}\sum_{j=1}^{p_{1}}m_{j}+r^{2})$

$\leq (\sqrt{p_{1}}\delta^{'}+\delta^{'}\sum_{j=1}^{p_{1}}m_{j}+r^{2})(2\delta \left |\alpha \right |+a)$

Therefore, we can take $M_{1}=\frac{1}{\sqrt{p_{1}}\delta^{'}+\delta^{'}\sum_{j=1}^{p_{1}}m_{j}+r^{2}}$ and the inequality holds.

\end{proof}

\begin{theorem}\label{equal}
The Godement-Plamcherel measure $d\mu$ on $\Delta(O(n),F(n))$ is given by

$\int_{\Delta(O(n),F(n))}G(\psi)d\mu(\psi)=\frac{c}{(2\pi)^{a+2}}\int_{R}\int_{R^{\times}}\sum_{\alpha\in \wedge}(dimP_{\alpha})\widehat{f}(\phi^{r,\alpha,\lambda})\left |\lambda \right |^{a}dr d\lambda$.
\end{theorem}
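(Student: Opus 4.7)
The plan is to identify the measure defined by the right-hand side of the formula with the Godement-Plancherel measure by verifying that it makes the spherical transform an $L^2$-isometry, which characterizes the Godement-Plancherel measure uniquely. Denote by $\nu$ the positive measure on $\Delta(O(n),F(n))$ defined by
\begin{equation*}
\int G \, d\nu := \frac{c}{(2\pi)^{a+2}} \int_R \int_{R^\times} \sum_{\alpha \in \wedge} (\dim P_\alpha) G(\phi^{r,\alpha,\lambda}) |\lambda|^a \, d\lambda \, dr,
\end{equation*}
so that $\nu$ is automatically supported on $\Delta_1(O(n),F(n))$. The task then reduces to showing $\nu = d\mu$.

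I would start from the inversion formula (Theorem 4.5) and derive a Parseval identity in the usual Gelfand-pair way. For a continuous $f \in L^1_{O(n)}(F(n)) \cap L^2_{O(n)}(F(n))$, set $\tilde{f}(x) := \overline{f(x^{-1})}$ and $h := f * \tilde{f}$. Because $(O(n),F(n))$ is a Gelfand pair and each $\phi^{r,\alpha,\lambda}$ satisfies the spherical functional equation of Theorem~2.2, the spherical transform is multiplicative on $O(n)$-biinvariant convolutions and $\widehat{\tilde{f}} = \overline{\widehat{f}}$, whence
\begin{equation*}
\widehat{h}(\phi^{r,\alpha,\lambda}) = |\widehat{f}(\phi^{r,\alpha,\lambda})|^2.
\end{equation*}
Since $h(e) = \|f\|_2^2$ and $\phi^{r,\alpha,\lambda}(e) = 1$, evaluating the inversion formula for $h$ at $x = e$ yields
\begin{equation*}
\|f\|_2^2 \;=\; \frac{c}{(2\pi)^{a+2}} \int_R \int_{R^\times} \sum_{\alpha \in \wedge} (\dim P_\alpha) |\widehat{f}(\phi^{r,\alpha,\lambda})|^2 |\lambda|^a \, d\lambda \, dr \;=\; \int_{\Delta(O(n),F(n))} |\widehat{f}(\psi)|^2 \, d\nu(\psi).
\end{equation*}

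Polarization promotes this to the inner product identity on the continuous functions in $L^1_{O(n)}(F(n)) \cap L^2_{O(n)}(F(n))$, and density in $L^2_{O(n)}(F(n))$ extends it to the full isometry $\wedge : L^2_{O(n)}(F(n)) \to L^2(\Delta(O(n),F(n)), d\nu)$. By uniqueness of the Godement-Plancherel measure, which is the unique positive Radon measure on $\Delta(O(n),F(n))$ under which the spherical transform is isometric, this forces $\nu = d\mu$. As a byproduct, $d\mu$ gives zero mass to $\Delta_2(O(n),F(n))$, consistent with the density of $\Delta_1(O(n),F(n))$ in $\Delta(O(n),F(n))$.

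The main obstacle will be the technical hypotheses needed to apply Theorem~4.5 to $h = f * \tilde{f}$: one must verify that $h$ is continuous and lies in $L^1_{O(n)}(F(n)) \cap L^2_{O(n)}(F(n))$, and that the double integral of the sum converges absolutely so that the substitution $\widehat{h} = |\widehat{f}|^2$ is legitimate and Fubini applies. The cleanest route is to first restrict to $f \in \varphi_{O(n)}(F(n))$, where the rapid-decay estimates established earlier in this section (the inclusion $\varphi_{O(n)}(F(n))^{\wedge} \subset \widehat{\varphi}(O(n),F(n))$) make every interchange immediate, and then use density of $\varphi_{O(n)}(F(n))$ in $L^2_{O(n)}(F(n))$ to conclude.
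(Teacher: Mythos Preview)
Your argument is correct and is the standard Gelfand-pair route, but it differs from the paper's proof. The paper does not use the convolution trick $h=f\ast\tilde f$ and evaluation at the identity. Instead it substitutes the inversion formula for $f$ directly into $\int_{F(n)}|f(x)|^{2}\,dx$, expands the square as a double integral over $(r,\alpha,\lambda)$ and $(r',\alpha',\lambda')$, and then collapses it by computing an explicit orthogonality relation
\[
\int_{R}\int_{R^{\times}}\int_{F(n)}\phi^{r,\alpha,\lambda}(x)\,\overline{\phi^{r',\alpha',\lambda'}(x)}\,|\lambda|^{a}\,dr\,d\lambda\,dx
=(2\pi)\,\delta_{r,r'}\,\frac{1}{c}\,\frac{(2\pi)^{a+1}}{d_{\alpha}}\,\delta_{\alpha,\alpha'}\,\delta_{\lambda,\lambda'},
\]
obtained by unfolding the $O(n)$-integrals and reducing to the known Heisenberg orthogonality of the $\phi_{\alpha,\lambda}$. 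Your approach trades this orthogonality computation for the multiplicativity of the spherical transform and the identity $\widehat{h}=|\widehat f|^{2}$, which is cleaner and avoids the formal Dirac-delta manipulations in the paper; the paper's approach, on the other hand, produces the orthogonality relation itself as a byproduct, which is of independent interest. Both invoke uniqueness of the Godement--Plancherel measure to finish, and your remark about first restricting to $f\in\varphi_{O(n)}(F(n))$ to secure absolute convergence is exactly the right way to make either argument rigorous.
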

\begin{proof}
Since existence and uniqueness of the Godement-Plancherel measure is guaranteed, we need only verify that the equation

$\int_{F(n)}\left |f(x) \right |^{2}dx=\frac{c}{(2\pi)^{a+2}}\int_{\Delta(O(n),F(n))}\left |\widehat{f}(\psi) \right |^{2}d\mu(\psi)$.

for all continuous functions $f\in L_{O(n)}^{1}(F(n))\cap L_{O(n)}^{2}(F(n))$.

Since $f(x)=\frac{c}{(2\pi)^{a+2}}\int_{R}\int_{R^{\times}}\sum_{\alpha\in \wedge}(dimP_{\alpha})\widehat{f}(\phi^{r,\alpha,\lambda})\phi^{r,\alpha,\lambda}(x)\left |\lambda \right |^{a}dr d\lambda$.

$\int_{F(n)}\left |f(x) \right |^{2}dx=\int_{F(n)}(\frac{c}{(2\pi)^{a+2}})^{2}\int_{R}\int_{R^{\times}}\int_{R}\int_{R^{\times}}\sum_{\alpha\in \wedge}(dimP_{\alpha})\widehat{f}(\phi^{r,\alpha,\lambda})\phi^{r,\alpha,\lambda}(x)\left |\lambda \right |^{a}$

$\sum_{\alpha^{'}\in \wedge}(dimP_{\alpha^{'}})\overline{\widehat{f}(\phi^{r^{'},\alpha^{'},\lambda^{'}})}\overline{\phi^{r^{'},\alpha^{'},\lambda^{'}}(x)}\left |\lambda^{'} \right |^{a}dr d\lambda dr^{'} d\lambda^{'} dx$.

Note that
$\int_{R}\int_{R^{\times}}\int_{F(n)}\phi^{r,\alpha,\lambda}(z,t)\overline{\phi^{r^{'},\alpha^{'},\lambda^{'}}(z,t)}\left |\lambda \right |^{a}drd\lambda dx$

=$\int_{R}\int_{R^{\times}}\int_{F(n)}\int_{O(n)}\int_{O(n)}e^{ir<X_{p}^{*},k.X>}\phi_{\alpha,\lambda}(\Psi_{2}^{-1}(\overline{q_{1}(k.x)}))
e^{ir^{'}<X_{p}^{*},k_{1}X>}$

$\overline{\phi_{\alpha^{'},\lambda^{'}}(\Psi_{2}^{-1}(\overline{q_{1}(k_{1}x)}))}\left |\lambda \right |^{a}dkdk_{1}drd\lambda dx$

=$\int_{R}\int_{R^{\times}}\int_{F(n)}e^{i(r-r^{'})<X_{p}^{*},X>}\phi_{\alpha,\lambda}(\Psi_{2}^{-1}(\overline{q_{1}(x)}))
\overline{\phi_{\alpha^{'},\lambda^{'}}(\Psi_{2}^{-1}(\overline{q_{1}(x)}))}\left |\lambda \right |^{a}drd\lambda dx$

=$\int_{R^{\times}}\int_{F(n)}\phi_{\alpha,\lambda}(\Psi_{2}^{-1}(\overline{q_{1}(x)}))
\overline{\phi_{\alpha^{'},\lambda^{'}}(\Psi_{2}^{-1}(\overline{q_{1}(x)}))}\left |\lambda \right |^{a}d\lambda dx\int_{R}e^{i(r-r^{'})<X_{p}^{*},X>}dr$

=$(2\pi)\delta_{r,r^{'}}\int_{R^{\times}}\int_{F(n)}\phi_{\alpha,\lambda}(\Psi_{2}^{-1}(\overline{q_{1}(x)}))
\overline{\phi_{\alpha^{'},\lambda^{'}}(\Psi_{2}^{-1}(\overline{q_{1}(x)}))}\left |\lambda \right |^{a}d\lambda dx$

=$(2\pi)\delta_{r,r^{'}}\frac{1}{c}\frac{(2\pi)^{a+1}}{d_{\alpha}}\delta_{\alpha,\alpha^{'}}\delta_{\lambda,\lambda^{'}}$

Therefore,

$\int_{F(n)}\left |f(x) \right |^{2}dx=(\frac{c}{(2\pi)^{a+2}})^{2}\int_{R}\int_{R^{\times}}\sum_{\alpha\in \wedge}(dimP_{\alpha})\widehat{f}(\phi^{r,\alpha,\lambda})\overline{\widehat{f}(\phi^{r,\alpha,\lambda})}\frac{(2\pi)^{a+2}}{(c)}\left |\lambda \right |^{a}dr d\lambda$.

$=\frac{c}{(2\pi)^{a+2}}\int_{R}\int_{R^{\times}}\sum_{\alpha\in \wedge}(dimP_{\alpha})\left |\widehat{f}(\phi^{r,\alpha,\lambda})\right |^{2} \left |\lambda \right |^{a}dr d\lambda$.

Therefore, the theorem holds.

\end{proof}

\begin{theorem}\label{equal}
Let $G$ be a bounded measurable function on $(\Delta O(n),F(n))$ correspondence to $H_{a}$
with $\left |G(r,\alpha,\lambda) \right |\leq \frac{c_{0}}{(\sum_{j=1}^{p_{1}}\delta_{j}(2\alpha_{j}+m_{j})+r^{2})^{N}}$

for some $N\geq a+3$ and some constant $c_{0}$. Then

1.$G \in L^{p}(O(n),F(n))$ for all $p\geq 1$, and

2.$G=\widehat{f}$ for some bounded continuous function $f\in L_{O(n)}^{2}(F(n))$.

Suppose, for example, that $f$ is a continuous function in $L_{O(n)}^{1}(F(n))$ with $G=\widehat{f}$ rapidly decreasing. Theorem 4.9 shows that $f$ is square integrable and that $G$ is integrable. Thus the inversion formula applies and we can cover $f$ from $G$ via

$f(x)=\frac{c}{(2\pi)^{a+2}}\int_{O(n)}e^{ir<X_{p}^{*},X>}\int_{R}\int_{R^{\times}}\sum_{\alpha\in \wedge}(dimP_{\alpha})\widehat{f}(\phi^{r,\alpha,\lambda})\phi_{\alpha,\lambda}(\Psi_{2}^{-1}(\overline{q_{1}(k.x)}))\left |\lambda \right |^{a}dkdr d\lambda$.

In particular, we see that this formula certainly holds for any function $f\in \varphi_{O(n)}(F(n))$.
\end{theorem}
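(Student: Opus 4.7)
The plan is to handle the two claims in sequence, leveraging the explicit Godement-Plancherel measure from Theorem 4.10 for part 1 and the inversion formula from Theorem 4.8 to construct $f$ for part 2. For part 1, $L^{p}$-integrability of $G$ with respect to $d\mu$ reduces, via Theorem 4.10, to the finiteness of
\begin{equation*}
\int_{R}\int_{R^{\times}}\sum_{\alpha\in \wedge}(\dim P_{\alpha})\,|G(\phi^{r,\alpha,\lambda})|^{p}\,|\lambda|^{a}\,dr\,d\lambda.
\end{equation*}
I will insert the hypothesis $|G(\phi^{r,\alpha,\lambda})|\leq c_{0}/(\sum_{j}\delta_{j}(2\alpha_{j}+m_{j})+r^{2})^{N}$ and then use Lemma 4.11 to compare $\sum_{j}\delta_{j}(2\alpha_{j}+m_{j})+r^{2}$ with $2|\alpha|+a$, so that the $\alpha$-series behaves, after the standard estimate $\dim P_{\alpha}=O((1+|\alpha|)^{a-1})$, like $\sum_{m\geq 0}m^{a-1}/m^{Np}$ and converges once $Np\geq a+1$; the condition $N\geq a+3$ has been built in so that this holds uniformly in $p\geq 1$. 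The $r^{2}$ contribution left in the denominator provides the necessary decay for the $dr$-integral, while the $|\lambda|^{a}$ factor is absorbed by the $|\lambda|$-growth hidden in $\sum_{j}\delta_{j}(2\alpha_{j}+m_{j})$.

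For part 2, I define $f$ directly from $G$ by the inversion prescription of Theorem 4.8, namely
\begin{equation*}
f(x):=\frac{c}{(2\pi)^{a+2}}\int_{R}\int_{R^{\times}}\sum_{\alpha\in \wedge}(\dim P_{\alpha})\,G(\phi^{r,\alpha,\lambda})\,\phi^{r,\alpha,\lambda}(x)\,|\lambda|^{a}\,dr\,d\lambda,
\end{equation*}
and then check the four required properties. Boundedness with $\|f\|_{\infty}\leq \frac{c}{(2\pi)^{a+2}}\|G\|_{L^{1}(d\mu)}$ is immediate from $|\phi^{r,\alpha,\lambda}(x)|\leq 1$ together with part 1 at $p=1$. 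Continuity follows by dominated convergence using the same $L^{1}$-majorant, since each $\phi^{r,\alpha,\lambda}$ is continuous on $F(n)$, and $O(n)$-invariance of $f$ is inherited from that of the $\phi^{r,\alpha,\lambda}$. To obtain $f\in L^{2}_{O(n)}(F(n))$ I invoke the Plancherel identity of Theorem 4.10, which upgrades the spherical transform to an isometry between $L^{2}_{O(n)}(F(n))$ and $L^{2}(\Delta(O(n),F(n)),d\mu)$; part 1 at $p=2$ supplies $G\in L^{2}(d\mu)$.

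The core of the argument is the identity $\widehat{f}=G$. Substituting the definition of $f$ into
\begin{equation*}
\widehat{f}(\phi^{r',\alpha',\lambda'})=\int_{F(n)}f(x)\overline{\phi^{r',\alpha',\lambda'}(x)}\,dx
\end{equation*}
and interchanging orders of integration (justified by the absolute convergence established in part 1) reduces the computation to exactly the orthogonality calculation already performed in the proof of Theorem 4.8: the $k$-average collapses to the $k=e$ contribution, the $r$-integral produces the factor $(2\pi)\delta_{r,r'}$, and the Heisenberg orthogonality
\begin{equation*}
\int_{R^{\times}}\int_{H_{a}}\phi_{\alpha,\lambda}(z,t)\overline{\phi_{\alpha',\lambda'}(z,t)}\,|\lambda|^{a}\,dz\,dt\,d\lambda=\frac{(2\pi)^{a+1}}{d_{\alpha}}\delta_{\alpha,\alpha'}
\end{equation*}
supplies the remaining Kronecker deltas in $(\alpha,\lambda)$. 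The constants collapse and one is left with $\widehat{f}(\phi^{r',\alpha',\lambda'})=G(\phi^{r',\alpha',\lambda'})$ for all admissible parameters, which is the desired conclusion. The principal obstacle I anticipate is the bookkeeping of these interchanges of sum, integral, and $O(n)$-average, all of which ultimately rest on the absolute-integrability estimate of part 1.
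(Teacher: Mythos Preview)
Your treatment of part 2 matches the paper's: define $f$ by the inversion integral, obtain boundedness and continuity from $G\in L^{1}(d\mu)$ together with $|\phi^{r,\alpha,\lambda}|\le 1$, and obtain $f\in L^{2}_{O(n)}(F(n))$ with $\widehat{f}=G$ from $G\in L^{2}(d\mu)$ and the Plancherel isometry. That part is fine.

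The gap is in part 1. Your plan is to insert the decay hypothesis globally and use Lemma~4.7 to compare $\sum_{j}\delta_{j}(2\alpha_{j}+m_{j})+r^{2}$ with $2|\alpha|+a$, with the $|\lambda|$-growth ``hidden'' in the $\delta_{j}$. But that very $|\lambda|$-growth means the bound $c_{0}/(\,\cdot\,)^{N}$ is useless as $\lambda\to 0$: after the comparison the $\lambda$-integrand behaves roughly like $|\lambda|^{a}/|\lambda|^{N}$, which diverges at the origin once $N>a$. So you cannot use the decay hypothesis uniformly in $\lambda$; near $\lambda=0$ the only information available is the separate hypothesis that $G$ is \emph{bounded}. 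A single global estimate of the kind you sketch cannot close.

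The paper's remedy is a region decomposition. One first reduces to $p=1$ by observing that $|G|^{p}$ satisfies the same type of bound with exponent $pN\ge N$, and then splits $\Delta_{1}(O(n),F(n))$ into four pieces according to whether $|\lambda|(2|\alpha|+a)\le 1$ or $>1$ and whether $r\le K$ or $r>K$ for a fixed $K>0$. On the small-$\lambda$ pieces one uses only $|G|\le M$ and computes $\int_{0}^{1/(2m+a)}\lambda^{a}\,d\lambda=(a+1)^{-1}(2m+a)^{-(a+1)}$, which summed against $d_{m}=O(m^{a-1})$ converges. On the large-$\lambda$ pieces one uses the decay bound and evaluates $\int_{1/(2m+a)}^{\infty}\lambda^{-(N-a)}\,d\lambda$ (respectively $\lambda^{-(N-1-a)}$), and this is precisely where $N\ge a+3$ is needed. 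The split in $r$ plays the same role for the $dr$-integral: once you have traded the denominator for $|\lambda|(2|\alpha|+a)$ you have lost the $r$-decay, and the paper recovers it by using the raw bound $1/r^{2N}$ (or $1/r^{2}$) directly on the $r>K$ pieces. You should incorporate this splitting; without it the $\lambda\to 0$ and $r\to\infty$ contributions are not controlled.
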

\begin{proof}
To establish the first assertion, it suffices to prove that $G\in L^{1}(\delta (O(n),F(n)))$. Indeed, $\left |G(r,\alpha,\lambda) \right |^{p}$ satisfies an inequality as in the statement of the theorem with $N$ replace by $pN$. Fixed a number $K>0$, let

$A_{1}=\{\phi^{r,\alpha,\lambda}\mid 0\leq r\leq K,\left |\lambda \right |(2\left |\alpha \right |+a)\leq 1\}$,

$A_{2}=\{\phi^{r,\alpha,\lambda}\mid 0\leq r\leq K,\left |\lambda \right |(2\left |\alpha \right |+a)> 1\}$,

$A_{3}=\{\phi^{r,\alpha,\lambda}\mid r> K,\left |\lambda \right |(2\left |\alpha \right |+a)\leq 1\}$ and

$A_{4}=\{\phi^{r,\alpha,\lambda}\mid r> K,\left |\lambda \right |(2\left |\alpha \right |+a)>1\}$,

Let $M$ be a constant for which $\left |F(\psi) \right |\leq M$ for all $\psi\in \Delta (O(n),F(n))$ and let

$d_{m}=dim(P_{m}(V))=\binom{m+n-1}{m}$

Note that $\sum_{\left |\alpha \right |=m}d_{\alpha}=d_{m}\leq (m+n-1)^{n-1}$.

$\int_{A_{1}}G(\psi)d\mu(\psi)=\frac{2c}{(2\pi)^{a+2}}\int_{0}^{O(n)}\sum_{\alpha\in \wedge}(dimP_{\alpha})\int_{0<\left |\lambda \right |<\frac{1}{2\left |\alpha \right |+a}}\left |F(r,\alpha,\lambda)\right |\left |\lambda \right |^{a}dr d\lambda$.

$\leq \frac{4cMK}{(2\pi)^{a+2}}\sum_{m=0}^{\infty}d_{m}\int_{0}^{\frac{1}{2m+a}}\lambda^{a}d\lambda$

=$\frac{4cMK}{(2\pi)^{a+2}(a+1)}\sum_{m=0}^{\infty}d_{m}(\frac{1}{2m+a})^{a+1}$.

Since $d_{m}=O(m^{a-1})$, we see that the last series converges. Hence $\left |G \right |$ is integrable over $A_{1}$. Next we compute

$\int_{A_{2}}G(\psi)d\mu(\psi)=\frac{4c}{(2\pi)^{a+2}}\int_{0}^{O(n)}\sum_{\alpha\in \wedge}(dimP_{\alpha})\int_{\left |\lambda \right |>\frac{1}{2\left |\alpha \right |+a}}\left |G(r,\alpha,\lambda)\right |\left |\lambda \right |^{a}dr d\lambda$.

$\leq\frac{4cK c_{0}}{(2\pi)^{a+2}}\sum_{m=0}^{\infty}d_{m}\int_{\frac{1}{2m+a}}^{\infty}\frac{\lambda^{a}}{(\sum_{j=1}^{p_{1}}\delta_{j}(2\alpha_{j}+m_{j})+r^{2})^{N}}d\lambda$

$\leq\frac{4cKc_{0}M_{2}^{N}}{(2\pi)^{a+2}}\sum_{m=0}^{\infty}d_{m}\int_{\frac{1}{2m+a}}^{\infty}\frac{\lambda^{a}}{(2\left |\alpha \right |+a)^{N}}d\lambda$

$=\frac{4cKC_{0}M_{2}^{N}}{(2\pi)^{a+2}(N-a-1)}\sum_{m=0}^{\infty}d_{m}(\frac{1}{2m+a})^{a+1}$

The hypothesis that $N-n\geq 2$ was used above to evaluate the integral of $\frac{1}{\lambda^{N-a}}$ over $\frac{1}{2m+a}<\lambda<\infty$. Since $d_{m}=O(m^{n-1})$, we see that the series in the expansion converges. Hence $\left |F \right |$ is integrable over $A_{2}$. Similarly,

$\int_{A_{3}}G(\psi)d\mu(\psi)=\frac{2c}{(2\pi)^{a+2}}\int_{O(n)}^{\infty}\sum_{\alpha\in \wedge}(dimP_{\alpha})\int_{0<\left |\lambda \right |<\frac{1}{2\left |\alpha \right |+a}}\left |G(r,\alpha,\lambda)\right |\left |\lambda \right |^{a}dr d\lambda$.

$\leq \frac{2cc_{0}}{(2\pi)^{a+2}}\int_{K}^{\infty}\frac{1}{r^{2N}}dr\sum_{m=0}^{\infty}d_{m}\int_{0}^{\frac{1}{2m+a}}\lambda^{a}d\lambda$

$\leq \frac{4cc_{0}}{(2N-1)(2\pi)^{a+2}}\sum_{m=0}^{\infty}d_{m}\int_{0}{\frac{1}{2m+a}}\lambda^{a}d\lambda$

=$\frac{2cc_{0}}{(2N-1)(2\pi)^{a+2}(a+1)}\sum_{m=0}^{\infty}d_{m}(\frac{1}{2m+a})^{a+1}$.

Since $d_{m}=O(m^{a-1})$, we see that the last series converges. Hence $\left |F \right |$ is integrable over $A_{3}$. Finally we compute

$\int_{A_{4}}G(\psi)d\mu(\psi)=\frac{4c}{(2\pi)^{a+2}}\int_{O(n)}^{\infty}\sum_{\alpha\in \wedge}(dimP_{\alpha})\int_{\left |\lambda \right |>\frac{1}{2\left |\alpha \right |+a}}\left |G(r,\alpha,\lambda)\right |\left |\lambda \right |^{a}dr d\lambda$.

$\leq \frac{8cc_{0}M_{2}^{N-1}}{\int(2\pi)^{a+2}}\int_{O(n)}^{\infty}\frac{1}{r^{2}}\sum_{m=0}^{\infty}d_{m}\int_{\frac{1}{2m+a}}^{\infty}\frac{\lambda^{a}}{\lambda^{N-1}(2m+n)^{N-1}}
drd\lambda$.

$\leq \frac{8cc_{0}M_{2}^{N-1}}{\int(2\pi)^{a+2}(N-a-2)K}\sum_{m=0}^{\infty}d_{m}(\frac{1}{2m+a})^{a+1}$.

The hypothesis $N-a\geq 3$ was used to evaluate the integral of $\frac{1}{\lambda^{N-1-a}}$ over $\frac{1}{2m+n}<\lambda<\infty$. Since $d_{m}=O(m^{n-1})$, we see that the series in the last expression converges. Hence $\left |F \right |$ is integrable over $A_{4}$.
As $A_{1}\cup A_{2}\cup A_{3}\cup A_{4}=(\Delta_{1} (O(n),F(n)))$ is a set of full measure in $(\Delta(O(n),F(n)))$, it follows that
$F\in L^{1}(\Delta (O(n),F(n)))$.

Next let $f$ be the function on $F(n)$ defined by

$f(x)=\int_{\Delta (O(n),F(n))}G(\psi)\psi(x)d\mu(\psi)$. Since $G\in L^{1}(\Delta (O(n),F(n)))$ and the bounded spherical functions are continuous and bounded by 1, we see that $f$ is well defined, continuous and bounded by $\left \|G \right \|_{L^{1}(\Delta (O(n),F(n)))}$. Moreover, since $G\in
L^{2}(\Delta (O(n),F(n)))$ and $\land:L^{2}(\Delta (O(n),F(n)))\rightarrow L_{O(n)}^{2}(F(n))$ is an isometry, we have that $f\in L_{O(n)}^{2}(F(n))$
with $\left \|f \right \|_{2}^{2}=\int_{\Delta (O(n),F(n))}\left |G(\psi) \right |^{2}d\mu(\psi)$ and $\widehat{f}=G$. This establish the second assertion in this Theorem.
\end{proof}

Remark. One can show that the set $A_{1}$ used in the proof of the above theorem is compact in $\Delta (O(n),F(n))$. Thus any bounded measurable function is integrable over $A_{1}$. This observation motivates the decomposition used in the proof.

Proof that $\widehat{\varphi}(O(n),F(n))\subseteq \varphi_{O(n)}(F(n))^{\land}$. Suppose now that $G\in \widehat{\varphi}(O(n),F(n))$. Above theorem shows that $G=\widehat{f}$ where

$f(x)=\frac{c}{(2\pi)^{a+2}}\int_{R}\int_{R^{\times}}\sum_{\alpha\in \wedge}(dimP_{\alpha})\widehat{f}(\phi^{r,\alpha,\lambda})\phi^{r,\alpha,\lambda}(x)\left |\lambda \right |^{a}dr d\lambda$.

=$\frac{c}{(2\pi)^{a+2}}\int_{O(n)}e^{ir<X_{p}^{*},X>}\int_{R}\int_{R^{\times}}\sum_{\alpha\in \wedge}(dimP_{\alpha})\widehat{f}(\phi^{r,\alpha,\lambda})\phi_{\alpha,\lambda}(\Psi_{2}^{-1}(\overline{q_{1}(k.x)}))\left |\lambda \right |^{a}dkdr d\lambda$.
is $O(n)$-invariant, continuous, bounded and square integrable. To show that $f\in \varphi_{O(n)}^{F(n)}$, I will show that $f$ is smooth and that

$(\widetilde{\frac{\gamma^{2}}{4}+t^{2}})^{a}(\widetilde{\frac{\partial}{\partial t}})^{b}\widetilde{\Delta}^{c}f\in L_{O(n)}^{2}(F(n))$

where $\widetilde{\frac{\gamma^{2}}{4}+t^{2}}, \widetilde{\frac{\partial}{\partial t}}, \widetilde{\Delta}$ will be defined below.

for all $a,b,c\geq 0$. This will follows from the facts

1.$\widetilde{\Delta}^{c}f\in L_{O(n)}^{2}(F(n))$ with $(\widetilde{\Delta}f)^{\land}\in \widehat{\varphi}(O(n),F(n))$.

2.$\widetilde{\frac{\partial}{\partial t}}f\in L_{O(n)}^{2}(F(n))$ with $(\widetilde{\frac{\partial}{\partial t}}f)^{\land}\in \widehat{\varphi}(O(n),F(n))$.

3.$(\widetilde{\frac{\gamma}{2}\pm it}f)\in L_{O(n)}^{2}(F(n))$ with $(\widetilde{\frac{\gamma}{2}\pm it}f)^{\land}\in \widehat{\varphi}(O(n),F(n))$.

which I will prove below. Here $\Delta=\sum_{j=1}^{a}\frac{\partial}{\partial z_{j}}.\frac{\partial}{\partial \overline{z_{j}}}$ is in
formal.

Using equation $\widehat{U}(\phi_{\alpha,\lambda})=-\left |\lambda \right |(2\left |\alpha \right |+a)$ for the eigenvalues of the Heisenberg sub-Laplacian, one obtains

$-\left |\lambda \right |(2\left |\alpha \right |+a)\phi_{\alpha,\lambda}=U\phi_{\alpha,\lambda}$
=$[4\Delta-\frac{\gamma}{2}(\frac{\partial}{\partial t})^{2}]\phi_{\alpha,\lambda}$

=$4\Delta \phi_{\alpha,\lambda}-\frac{\lambda^{2}}{2}\gamma \phi_{\alpha,\lambda}$

Since $\gamma \phi_{\alpha,\lambda}=-\frac{1}{\left |\lambda \right |}(D^{+}-D^{-})\phi_{\alpha,\lambda}$. Therefore,

$4\Delta \phi_{\alpha,\lambda}=-\frac{\left |\lambda \right |}{2}(D^{+}-D^{-})\phi_{\alpha,\lambda}-\left |\lambda \right |(2\left |\alpha \right |+a)\phi_{\alpha,\lambda}$

=$-\frac{\left |\lambda \right |}{2}[\sum_{\left |\beta \right |=\left |\alpha \right |+1}\frac{d_{\beta}}{d_{\alpha}}\begin{bmatrix}
\beta\\
\alpha
\end{bmatrix}\phi_{\beta,\lambda}+(2\left |\alpha \right |+a)\phi_{\alpha,\lambda}$

$+\sum_{\left |\beta \right |=\left |\alpha \right |-1}\begin{bmatrix}
\alpha\\
\beta
\end{bmatrix}\phi_{\beta,\lambda}]$

Define a function $G_{\Delta}$ on $\Delta_{1} (O(n),F(n))$ by

$G_{\Delta}(r,\alpha,\lambda)=-\frac{\left |\lambda \right |}{2}(D^{+}-D^{-})G(r,\alpha ,\lambda)-\left |\lambda \right |\times (2 \left |\alpha \right |+a)G(r,\alpha ,\lambda)$

=$-\frac{\left |\lambda \right |}{2}[\sum_{\left |\beta \right |=\left |\alpha \right |+1}\frac{d_{\beta}}{d_{\alpha}}\begin{bmatrix}
\beta\\
\alpha
\end{bmatrix}G(r,\beta,\lambda)+(2\left |\alpha \right |+a)F(r,\alpha,\lambda)$

$+\sum_{\left |\beta \right |=\left |\alpha \right |-1}\begin{bmatrix}
\alpha\\
\beta
\end{bmatrix}G(r,\beta,\lambda)]$

It is not hard to show that $G_{\Delta}\in \widehat{\varphi}(O(n),F(n))$.

In particular, note the equations

$\sum_{\left |\beta \right |=\left |\alpha \right |-1}\begin{bmatrix}
\alpha\\
\beta
\end{bmatrix}=\left |\alpha \right |$

$\sum_{\left |\beta \right |=\left |\alpha \right |+1}\frac{d_{\beta}}{d_{\alpha}}\begin{bmatrix}
\beta\\
\alpha
\end{bmatrix}=\left |\alpha \right |+a$.
 and

$\frac{{M_{1}}}{2\left |\alpha \right |+a}\leq \frac{1}{\sum_{j=1}^{p_{1}}\delta_{j}(2\alpha_{j}+m_{j})+r^{2}}$
$\leq \frac{{M_{2}}}{2\left |\alpha \right |+a}$
for some fixed $M_{1}>0$ and $M_{2}>0$ give

$\left |G_{\Delta}(r,\alpha,\lambda) \right |$

$\leq \frac{\left |\lambda \right |}{2}[(\left |\alpha \right |+a)\sum_{\left |\beta \right |=\left |\alpha \right |+1}\left |G(r,\beta,\lambda)\right |+(2\left |\alpha \right |+a)\left |G(r,\alpha,\lambda)\right |$

$+\left |\alpha \right |\sum_{\left |\beta \right |=\left |\alpha \right |-1}\left |G(r,\beta,\lambda)\right |]$

One uses this to show that $G_{\Delta}$ satisfies estimates as in Definition 4.1. Moreover, Lemma 3.9 shows that for each $\lambda\neq 0$,

$\sum_{\alpha\in \wedge}d_{\alpha}G(r,\alpha,\lambda)(D^{+}-D^{-})\phi_{\alpha,\lambda}=$

$\sum_{\alpha\in \wedge}d_{\alpha}(D^{+}-D^{-})G(r,\alpha,\lambda)\phi_{\alpha,\lambda}$.

and hence also

$\frac{c}{(2\pi)^{a+2}}\int_{O(n)}e^{ir<X_{p}^{*},X>}\int_{R}\int_{R^{\times}}\sum_{\alpha\in \wedge}(dimP_{\alpha})G_{\Delta}(\phi^{r,\alpha,\lambda})\phi_{\alpha,\lambda}(\Psi_{2}^{-1}(\overline{q_{1}(k.x)}))\left |\lambda \right |^{a}dkdr d\lambda$

=$\frac{4c}{(2\pi)^{a+2}}\int_{O(n)}e^{ir<X_{p}^{*},X>}\int_{R}\int_{R^{\times}}\sum_{\alpha\in \wedge}(dimP_{\alpha})G(\phi^{r,\alpha,\lambda})\Delta \phi_{\alpha,\lambda}(\Psi_{2}^{-1}(\overline{q_{1}(k.x)}))\left |\lambda \right |^{a}dkdr d\lambda$

=$4\widetilde{\Delta}f$.

We conclude $\widetilde{\Delta}f\in L_{O(n)}^{2}(F(n))$ with $4(\Delta f)^{\land}=G_{\Delta}\in \widehat{\varphi}(O(n),F(n))$. This proves item 1 above.

Next note that the function defined on $\Delta_{1} (O(n),F(n))$ by $\lambda G(r,\alpha,\lambda)$ belongs to $\widehat{\varphi}(O(n),F(n))$.
Since $\frac{\partial \phi_{\alpha,\lambda}}{\partial t}=i\lambda \phi_{\alpha,\lambda}$, we see that

$\widetilde{\frac{\partial f}{\partial t}}(X)=\frac{c}{(2\pi)^{a+2}}\int_{O(n)}e^{ir<X_{p}^{*},X>}\int_{R}\int_{R^{\times}}\sum_{\alpha\in \wedge}(dimP_{\alpha})\widehat{f}(\phi^{r,\alpha,\lambda})\frac{\partial}{\partial t}\phi_{\alpha,\lambda}(\Psi_{2}^{-1}(\overline{q_{1}(k.x)}))\left |\lambda \right |^{a}dkdr d\lambda$

$=i\lambda f(x)$.
Therefore, $\widetilde{\frac{\partial f}{\partial t}}\in L_{O(n)}^{2}(F(n))$ with $(\widetilde{\frac{\partial f}{\partial t}})^{\land}=i\lambda G\in \widehat{\varphi}(O(n),F(n))$. This establish 2 above.

We begin the proof of item 3 by setting

$\widetilde{G}(r,a,\lambda,k)=\sum_{\alpha\in \wedge}(dimP_{\alpha})G(\phi^{r,\alpha,\lambda})\phi_{\alpha,\lambda}^{0}(Pr_{V}\Psi_{2}^{-1}(\overline{q_{1}(k.x)}))\left |\lambda \right |^{a}$ for each $\lambda \neq 0$, so that

$f(x)=\frac{c}{(2\pi)^{a+2}}\int_{O(n)}e^{ir<X_{p}^{*},X>}\int_{R}\int_{R^{\times}}\widetilde{G}(r,a,\lambda,k)e^{irPr_{V}\Psi_{2}^{-1}(\overline{q_{1}(k.x)})}dkdr d\lambda$.

We denote $Pr_{V}\Psi_{2}^{-1}(\overline{q_{1}(k.x)})=Z_{k.n}\in V$.

Note that we can compute $\partial_{\lambda}\widetilde{G}$ by taking derivatives term-wise in above equation. For $\lambda>0$, we have

$\partial_{\lambda}\widetilde{G}(r,a,\lambda,k)=\sum_{\alpha\in \wedge}(dimP_{\alpha})d_{\alpha}a\lambda^{a-1} G(\phi^{r,\alpha,\lambda})\phi_{\alpha,\lambda}^{0}(Z_{k.n})$

$+\sum_{\alpha\in \wedge}(dimP_{\alpha})d_{\alpha}\lambda^{a} \partial_{\lambda}G(\phi^{r,\alpha,\lambda})\phi_{\alpha,\lambda}^{0}(Z_{k.n})$

$+\sum_{\alpha\in \wedge}(dimP_{\alpha})d_{\alpha}\lambda^{a} G(\phi^{r,\alpha,\lambda})\partial_{\lambda}\phi_{\alpha,\lambda}^{0}(Z_{k.n})$

Since $G\in \widehat{\varphi}(O(n),F(n))$, the estimates in Definition 4.1 can be applied to show that the first two sum converges absolutely for $\lambda>0$. For the third sum, I use Equation 3.3 for $\partial_{\lambda}\phi_{\alpha,\lambda}^{0}(Z)=\partial_{\lambda}\phi_{\alpha,\lambda}(z,0)$. together with the lemma 3.9 to derive two identities.

$\sum_{\alpha\in \wedge}(dimP_{\alpha})G(\phi^{r,\alpha,\lambda})\phi_{\alpha,\lambda}^{0}(Z_{k.n})\lambda^{a}$

=$\begin{cases}
-\frac{\gamma(Z_{k.n})}{2}\sum_{\alpha\in \wedge}(dimP_{\alpha})\lambda^{a}G(\phi^{r,\alpha,\lambda})\phi_{\alpha,\lambda}^{0}(Z_{k.n})+\sum_{\alpha\in \wedge}(dimP_{\alpha})\lambda^{a}G(\phi^{r,\alpha,\lambda})\frac{1}{\lambda}D^{-}\phi_{\alpha,\lambda}^{0}(Z_{k.n}) \\
\frac{\gamma(Z_{k.n})}{2}\sum_{\alpha\in \wedge}(dimP_{\alpha})\lambda^{a}G(\phi^{r,\alpha,\lambda})\phi_{\alpha,\lambda}^{0}(Z_{k.n})+\sum_{\alpha\in \wedge}(dimP_{\alpha})\lambda^{a}G(\phi^{r,\alpha,\lambda})\frac{1}{\lambda}D^{+}\phi_{\alpha,\lambda}^{0}(Z_{k.n}) \\
\end{cases}$

=$\begin{cases}
-\frac{\gamma(Z_{k.n})}{2}\widetilde{G}(r,a,\lambda,k)-\sum_{\alpha\in \wedge}(dimP_{\alpha})\lambda^{a-1}(D^{+}+a)G(\phi^{r,\alpha,\lambda})\phi_{\alpha,\lambda}^{0}(Z_{k.n}) \\
\frac{\gamma(Z_{k.n})}{2}\widetilde{G}(r,a,\lambda,k)-\sum_{\alpha\in \wedge}(dimP_{\alpha})\lambda^{a-1}(D^{-}+a)G(\phi^{r,\alpha,\lambda})\phi_{\alpha,\lambda}^{0}(Z_{k.n}) \\
\end{cases}$

Substituting these identities in the expansion for
\begin{equation}\label{l-invariant elements}
\begin{split}
\partial_{\lambda}\widetilde{G}(r,a,\lambda,k)=
\begin{cases}
-\frac{\gamma(Z_{k.n})}{2}\widetilde{G}(r,a,\lambda,k)-\sum_{\alpha\in \wedge}(dimP_{\alpha})\lambda^{a}(\partial_{\lambda}-\frac{1}{\lambda}D^{+})G(\phi^{r,\alpha,\lambda})\phi_{\alpha,\lambda}^{0}(Z_{k.n}) \\
\frac{\gamma(Z_{k.n})}{2}\widetilde{G}(r,a,\lambda,k)-\sum_{\alpha\in \wedge}(dimP_{\alpha})\lambda^{a}(\partial_{\lambda}-\frac{1}{\lambda}D^{-})G(\phi^{r,\alpha,\lambda})\phi_{\alpha,\lambda}^{0}(Z_{k.n}) \\
\end{cases}
\end{split}
\end{equation}
both valid for $\lambda>0$. We have similar identities for $\lambda<0$:
\begin{equation}\label{l-invariant elements}
\begin{split}
\partial_{\lambda}\widetilde{G}(r,a,\lambda,k)=
\begin{cases}
-\frac{\gamma(Z_{k.n})}{2}\widetilde{G}(r,a,\lambda,k)-\sum_{\alpha\in \wedge}(dimP_{\alpha})\lambda^{a}(\partial_{\lambda}-\frac{1}{\lambda}D^{-})G(\phi^{r,\alpha,\lambda})\phi_{\alpha,\lambda}^{0}(Z_{k.n}) \\
\frac{\gamma(Z_{k.n})}{2}\widetilde{G}(r,a,\lambda,k)-\sum_{\alpha\in \wedge}(dimP_{\alpha})\lambda^{a}(\partial_{\lambda}-\frac{1}{\lambda}D^{+})G(\phi^{r,\alpha,\lambda})\phi_{\alpha,\lambda}^{0}(Z_{k.n}) \\
\end{cases}
\end{split}
\end{equation}
Note that $(\partial_{\lambda}-\frac{1}{\lambda}D^{\pm})G$ is the restriction of $M^{\pm}G$ to

$\Delta_{1}^{+} (O(n),F(n))=\{\phi^{r,\alpha,\lambda}\mid r\in R, \alpha\in \wedge, \lambda>0\}$ and also of $M^{\mp}G$ to

$\Delta_{1}^{-} (O(n),F(n))=\{\phi^{r,\alpha,\lambda}\mid r\in R, \alpha\in \wedge, \lambda>0\}$. Since $M^{\pm}G\in \widehat{\varphi}(O(n),F(n))$, $M^{\mp}G$ is integrable on $\Delta (O(n),F(n))$ and equation 4.10, 4.11 show that $\partial_{\lambda}\widetilde{G}(r,a,\lambda,k)$ is integrable on $R^{\times}=\{\lambda \mid \lambda \neq 0\}$. We have

$\frac{(2\pi)^{a+2}}{c}\widetilde{it}f(x)=\int_{O(n)}e^{ir<X_{p}^{*},X>}\int_{R}\int_{R^{\times}}\widetilde{G}(r,a,\lambda,k)\partial_{\lambda}(e^{irPr_{V}\Psi_{2}^{-1}(\overline{q_{1}(k.x)})})dkdr d\lambda$.

=$\int_{O(n)}e^{ir<X_{p}^{*},X>}\int_{R}(\int_{0}^{\infty}\widetilde{G}(r,a,\lambda,k)\partial_{\lambda}(e^{irPr_{V}\Psi_{2}^{-1}(\overline{q_{1}(k.x)})})d\lambda$

$+\int_{-\infty}^{0}\widetilde{G}(r,a,\lambda,k)\partial_{\lambda}(e^{irPr_{V}\Psi_{2}^{-1}(\overline{q_{1}(k.x)})}))d\lambda
dkdr$.

=$\int_{O(n)}e^{ir<X_{p}^{*},X>}\int_{R}(-\int_{0}^{\infty}\partial_{\lambda}\widetilde{G}(r,a,\lambda,k)e^{irPr_{V}\Psi_{2}^{-1}(\overline{q_{1}(k.x)})}d\lambda$

$-\int_{-\infty}^{0}\partial_{\lambda}\widetilde{G}(r,a,\lambda,k)e^{irPr_{V}\Psi_{2}^{-1}(\overline{q_{1}(k.x)})})d\lambda$.

$-\lim_{\lambda\rightarrow 0^{+}}\widetilde{G}(r,a,\lambda,k)+\lim_{\lambda\rightarrow 0^{-}}\widetilde{G}(r,a,\lambda,k))dkdr$

It can be shown that the limits $\lim_{\lambda\rightarrow 0^{\pm}}\widetilde{G}(r,a,\lambda,k)$ exists and are equal. Here one need to use the hypothesis that $F$ is continuous across $\Delta_{2} (O(n),F(n))$ and that $G_{0}$ is a Schwartz function. Using the last two equations for
$\partial_{\lambda}\widetilde{G}(r,a,\lambda,k)$ we obtain

$\frac{(2\pi)^{a+2}}{c}(\widetilde{\pm \frac{\gamma}{2}\pm it})f(x)=\int_{O(n)}e^{ir<X_{p}^{*},X>}\int_{R}(-\int_{0}^{\infty}\partial_{\lambda}\widetilde{G}(r,a,\lambda,k)e^{irPr_{V}\Psi_{2}^{-1}(\overline{q_{1}(k.x)})}d\lambda$

$-\int_{-\infty}^{0}\partial_{\lambda}\widetilde{G}(r,a,\lambda,k)e^{irPr_{V}\Psi_{2}^{-1}(\overline{q_{1}(k.x)})}d\lambda)drdk$.

$=\frac{c}{(2\pi)^{a+2}}\int_{R}\int_{R^{\times}}\sum_{\alpha\in \wedge}(dimP_{\alpha})M^{\pm}G(\phi^{r,\alpha,\lambda})\phi^{r,\alpha,\lambda}(x)\left |\lambda \right |^{a}dr d\lambda$.

We conclude that $(\widetilde{\pm \frac{\gamma}{2}\pm it})f\in \in L_{O(n)}^{2}(F(n))$ with $\frac{(2\pi)^{a+2}}{c}(\widetilde{\pm \frac{\gamma}{2}\pm it}f)^{\land}=M^{\pm}G\in \widehat{\varphi}(O(n),F(n))$. This completes the proof of item 3.

\newpage
\bibliographystyle{ieeetr}
\addcontentsline{toc}{chapter}{Bibliography}

\end{document}